\documentclass[12pt]{article}
\usepackage[utf8]{inputenc}

\usepackage{amsmath,amsthm}
\usepackage{amssymb}
\usepackage{epsfig,tikz}
\usepackage{graphicx}
\usepackage{comment}
\usepackage{asymptote}
\usepackage{xypic}
\usepackage{hyperref}
\hypersetup{
    unicode=false,          
    pdftoolbar=true,        
    pdfmenubar=true,        
    pdffitwindow=false,     
    pdfstartview={FitH},    
    pdfnewwindow=true,      
    colorlinks=true,       
    linkcolor=blue,          
    citecolor=red,        
    filecolor=magenta,      
    urlcolor=cyan           
}

\usepackage[tmargin=1in,bmargin=1in,lmargin=1in,rmargin=1in]{geometry}

\newcommand{\Mbar}{\overline{M}}

\newcommand{\bA}{\mathbb{A}}
\newcommand{\bZ}{\mathbb{Z}}

\newcommand{\bC}{\mathbb{C}}

\newcommand{\bP}{\mathbb{P}}

\newcommand{\beq}{\begin{equation}}
\newcommand{\eeq}{\end{equation}}

\DeclareMathOperator{\Sym}{Sym}
\DeclareMathOperator{\PGL}{PGL}
\newcommand{\St}{\operatorname{St}}
\newcommand{\Comp}{\mathrm{Comp}}
\newcommand{\CPF}{\mathrm{CPF}}
\newcommand{\Good}{\mathrm{GPF}}
\newcommand{\Bad}{\mathrm{BPF}}
\newcommand{\Noncorner}{\mathrm{GPF}_\bullet}
\newcommand{\Corner}{\mathrm{BPF}_\bullet}
\newcommand{\ins}{\iota}
\newcommand{\rem}{\nu}

\newcommand{\dom}{d}
\newcommand{\rev}{\mathrm{rev}}

\newcommand{\defn}{\textbf}

\newtheorem{thm}{Theorem}
\newtheorem{lemma}[thm]{Lemma}
\newtheorem{prop}[thm]{Proposition}
\newtheorem{corollary}[thm]{Corollary}

\numberwithin{thm}{section}
\numberwithin{equation}{section}
\numberwithin{figure}{section}

\theoremstyle{definition}

\newtheorem{example}[thm]{Example}
\newtheorem{defi}[thm]{Definition}

\newtheorem{remark}[thm]{Remark}

\usepackage{mathtools}
\usepackage{enumitem}

\def\multichoose#1#2{\left<\genfrac{}{}{0pt}{}{#1}{#2}\right>}

\title{Projective Embeddings of $\Mbar_{0,n}$ and Parking Functions}
\author{Renzo Cavalieri, Maria Gillespie, Leonid Monin}
\date{\today}

\begin{document}

\maketitle{}
\begin{abstract}
    The moduli space $\Mbar_{0,n}$
    may be  embedded into the product of projective spaces $\bP^1\times \bP^2\times \cdots \times \bP^{n-3}$, using a combination of the Kapranov map $|\psi_n|:\Mbar_{0,n}\to \mathbb{P}^{n-3}$ and the forgetful maps $\pi_i:\Mbar_{0,i}\to \Mbar_{0,i-1}$.  We give an explicit combinatorial formula for the multidegree of this embedding in terms of certain parking functions of height $n-3$.  We use this combinatorial interpretation to show that the total degree of the embedding (thought of as the projectivization of its cone in $\bA^2\times \bA^3\cdots \times \bA^{n-2}$) is equal to $(2(n-3)-1)!!=(2n-7)(2n-9) \cdots(5)(3)(1)$.  As a consequence, we also obtain a new combinatorial interpretation for the odd double factorial.
\end{abstract}

\section{Introduction}

The moduli space of stable, $n$-marked, rational curves $\Mbar_{0,n}$ is a poster child for the field of combinatorial algebraic geometry.  It is a smooth,  projective variety and a fine and proper moduli space. It may be obtained from $\mathbb{P}^{n-3}$ by
a combinatorially prescribed sequence of blow-ups along smooth loci.
It is also a tropical compactification, meaning  that it can be realized as  the closure of a very affine variety inside a toric variety. The stratification induced by the boundary of the toric variety coincides with the natural stratification by homeomorphism classes of the objects parameterized; strata are  indexed by stable trees with $n$-marked leaves, and the graph algebra of stable trees completely controls the intersection theory of $\Mbar_{0,n}$, meaning that one may combinatorially define a multiplication on stable trees in such a way that the natural assignment of a tree with the (closure of the) stratum it indexes defines a surjective ring homomorphism to the Chow ring of $\Mbar_{0,n}$.  

This work provides another instance of the rich interaction between algebraic geometry and combinatorics brought about by $\Mbar_{0,n}$.  The starting point of this paper is the closed embedding $$\phi_n:\Mbar_{0,n}\to \bP^1\times \bP^2\times \cdots \times \bP^{n-3}$$ (defined in Corollary \ref{cor:embedding}) arising from  recent work of Keel and Tevelev \cite{KeT}.  
We study the degrees of the embedding from both a geometric and combinatorial perspective. We now state succinctly our two main results and then discuss them.

\begin{thm}\label{thm:main-1}
Let $n\geq{3}$  and $\mathbf{k} = \{k_1, \ldots, k_{n-3}\}$ be an ordered list of non-negative integers with $\sum k_i = n-3$. Then:
\begin{equation}\label{eq:stoeq}
    \deg_\mathbf{k}(\phi_n(\Mbar_{0,n})) \stackrel{Prop.\ref{prop:deg}}{=} 
    \int_{\Mbar_{0,n}} \prod_{i=1}^{n-3}{\omega_{3+i}^{k_i}} 
    \stackrel{Prop. \ref{prop:otoamc}}{=} 
  \multichoose{n-3}{\rev(\mathbf{k})}
  \stackrel{Thm. \ref{thm:CPF}}{=}
    |\CPF(n-3,\rev(\mathbf{k}))|.
\end{equation}
\end{thm}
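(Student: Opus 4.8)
The statement is a chain of three equalities, so the plan is to prove each link separately and then compose. The first equality is essentially the definition of multidegree made geometric: since $\phi_n$ is a closed embedding (Corollary \ref{cor:embedding}), $\deg_\mathbf{k}$ is computed by intersecting $\phi_n(\Mbar_{0,n})$ with a generic product of linear subspaces of complementary codimensions, and by the projection formula this equals $\int_{\Mbar_{0,n}}\prod_i (\phi_n^* H_i)^{k_i}$, where $H_i$ is the hyperplane class of the factor $\bP^i$. The one thing to verify is the identification $\phi_n^* H_i=\omega_{3+i}$, which comes from unwinding the construction of $\phi_n$: the map to the $i$-th factor $\bP^i$ is the Kapranov map $|\psi_{3+i}|$ precomposed with the forgetful morphism $\pi^{(i)}:\Mbar_{0,n}\to\Mbar_{0,3+i}$, so that $\omega_{3+i}=(\pi^{(i)})^*\psi_{3+i}$ is the forgetful pullback of a single psi class. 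This first step I expect to be routine.

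The second equality is the geometric heart. I would evaluate $\int_{\Mbar_{0,n}}\prod_{i=1}^{n-3}\omega_{3+i}^{k_i}$ by peeling off marked points from the top down. Forgetting the point $n$ via $\pi_n:\Mbar_{0,n}\to\Mbar_{0,n-1}$, the class $\omega_n$ is the full psi class $\psi_n$ (since $\pi^{(n-3)}$ is the identity), while each $\omega_{3+i}$ with $i<n-3$ is a $\pi_n$-pullback (since $\pi^{(i)}$ factors through $\pi_n$); the projection formula then trades $\int_{\Mbar_{0,n}}\psi_n^{k_{n-3}}\cdot\pi_n^*(\cdots)$ for $\int_{\Mbar_{0,n-1}}\pi_{n*}(\psi_n^{k_{n-3}})\cdot(\cdots)$. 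Iterating this, and invoking the standard genus-zero evaluations $\int_{\Mbar_{0,m}}\psi_1^{a_1}\cdots\psi_m^{a_m}=\binom{m-3}{a_1,\dots,a_m}$ together with the comparison formulas for psi classes under forgetful maps, should assemble the answer into the claimed coefficient. The appearance of $\rev(\mathbf{k})$ rather than $\mathbf{k}$ is precisely the signature of processing the points from $n$ downward, which reverses the list. I expect this to be the main obstacle: the forgetful pullbacks differ from honest psi classes by boundary corrections, and one must show that these corrections accumulate to turn the naive multinomial into exactly $\multichoose{n-3}{\rev(\mathbf{k})}$ at each stage, i.e. that each pushforward contributes precisely the expected binomial factor.

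Finally, the third equality is purely combinatorial. Here I would show $|\CPF(n-3,\rev(\mathbf{k}))|=\multichoose{n-3}{\rev(\mathbf{k})}$, either by exhibiting a direct bijection onto the set of sequences counted by that coefficient, or by verifying that both sides obey the same peeling recursion (removing a distinguished, e.g.\ maximal, entry and tracking the resulting constraint). As a sanity check I would confirm the smallest interesting case $n=5$, where $n-3=2$ and the two admissible exponent vectors give degrees $1$ and $2$, matching the direct pushforward computation, the parking-function count, and summing to $(2(n-3)-1)!!=3$. Composing the three equalities then yields Theorem \ref{thm:main-1}, and summing over all admissible $\mathbf{k}$ recovers the odd double factorial advertised in the abstract.
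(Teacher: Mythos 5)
Your decomposition into three links, and your handling of the first one, match the paper exactly: Proposition \ref{prop:deg} is precisely the projection-formula argument together with the identification $\phi_n^\ast H_i=\omega_{3+i}$ read off from diagram \eqref{bigdiag}. Your strategy for the third link (show both sides satisfy the same recursion) is also the paper's, and is essentially forced since $\multichoose{n}{\mathbf{k}}$ is \emph{defined} by the recursion of Definition \ref{def:asym}; note however that the paper's bijection in Theorem \ref{thm:CPF} peels off the label $1$, not a maximal entry: the column-restriction $d_P(1)<1$ forces the $1$ to lie before the first empty column, which is exactly what makes $\CPF(n,\mathbf{k})$ split as $\bigsqcup_{j<i_{\mathbf{k}}}\CPF(n,\mathbf{k},j)$ with $\CPF(n,\mathbf{k},j)\cong\CPF(n-1,\widetilde{\mathbf{k}}_j)$.

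The genuine gap is in your second link. You propose to forget the point $n$ and evaluate $\pi_{n\ast}\bigl(\psi_n^{k_{n-3}}\cdot\pi_n^\ast(\cdots)\bigr)$ by the projection formula, "iterating" this downward. But the string equation \eqref{string} applies only when the psi class of the forgotten point is \emph{absent} from the monomial, and the only interesting case is $k_{n-3}\geq 1$ (otherwise the integral vanishes for dimension reasons). What your pushforward actually produces is $\pi_{n\ast}(\psi_n^{k_{n-3}})=\kappa_{k_{n-3}-1}$: a number only when $k_{n-3}\leq 1$, and a kappa class of positive degree on $\Mbar_{0,n-1}$ when $k_{n-3}\geq 2$ (already for $\int_{\Mbar_{0,6}}\omega_5\omega_6^2$ you get $\kappa_1\cdot\omega_5$ on $\Mbar_{0,5}$). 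Re-expressing kappa monomials in psi classes is a genuinely different and much heavier computation, and your phrase that the boundary corrections "should assemble" into $\multichoose{n-3}{\rev(\mathbf{k})}$ is exactly the statement requiring proof. The paper's mechanism is different: one forgets not the point $n$ but the \emph{largest} mark $a$ with $k_a=0$ (this is what the leftmost zero of $\rev(\mathbf{k})$ records, and is where the reversal really comes from). Then all omega classes with index $>a$ have positive exponents, so Lemma \ref{lem:otop} identifies their product with the corresponding psi monomial, the string equation applies legitimately, and Lemmas \ref{cor:symmetric} and \ref{lem:sttil} convert the output of $\St$ back into omega classes --- with a permutation trick needed precisely in the terms where some exponent $k_j=1$ drops to zero. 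That bookkeeping yields the recursion of Proposition \ref{prop:rec}, which matches Definition \ref{def:asym}; none of it is visible in your sketch, and the iteration you do describe would not produce it.
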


\begin{thm}\label{thm:main-2}
    Denote by $C$ the affine cone over $\phi_n(\Mbar_{0,n})$ in $\bA^2\times \bA^{3}\times \cdots \bA^{n-2}$.  
    Then  \begin{equation}
    \deg(\mathbb{P}(C))=\sum_{\bf k} \deg_{\bf k}(\phi_n(\Mbar_{0,n}))=(2(n-3)-1)!!    
    \end{equation} where $(2n-7)!!=(2n-7)(2n-9) \cdots (5)(3)(1)$ is the odd double factorial. 
\end{thm}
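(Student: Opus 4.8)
The plan is to prove the two asserted equalities in turn, treating the enumerative identity as the main content. The first equality $\deg(\bP(C))=\sum_{\mathbf k}\deg_{\mathbf k}(\phi_n(\Mbar_{0,n}))$ is a general statement about degrees of projectivized multicones, independent of the particular geometry of $\Mbar_{0,n}$, and I would prove it by exhibiting $\bP(C)$ as the image of a projective bundle. Writing $X=\phi_n(\Mbar_{0,n})\subset\bP^1\times\cdots\times\bP^{n-3}$ with hyperplane classes $H_i$, let $L_i$ be the restriction to $X$ of the tautological bundle $\mathcal O(-1)$ of the $i$-th factor and set $\mathcal E=\bigoplus_{i=1}^{n-3}L_i$, a bundle of rank $n-3$. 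The total space $\bP_X(\mathcal E)$ is a $\bP^{n-4}$-bundle over $X$, and the tautological map into the trivial bundle $\mathcal O^{\,N+1}$ (where $\bP^N$ is the ambient space of $\bP(C)$) induces a morphism $\bP_X(\mathcal E)\to\bP^N$ that is birational onto $\bP(C)$: over a general point of $X$ the fiber $\bP^{n-4}$ maps isomorphically onto the linear span of the nonzero block vectors. Hence $\deg(\bP(C))=\int_{\bP_X(\mathcal E)}\zeta^{\dim\bP(C)}$ for $\zeta$ the relative hyperplane class, and since $\dim\bP(C)=2(n-3)-1$, the Grothendieck relation pushes this forward to $\int_X s_{n-3}(\mathcal E)$. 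Arranging conventions so that $c_1(L_i)=-H_i$, the total Segre class is $\prod_i(1-H_i)^{-1}$, whose degree-$(n-3)$ term is $\sum_{|\mathbf k|=n-3}\prod_i H_i^{k_i}$; integrating over $X$ recovers exactly $\sum_{\mathbf k}\deg_{\mathbf k}(\phi_n(\Mbar_{0,n}))$.

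For the second equality I would pass to the combinatorial model furnished by Theorem \ref{thm:main-1}, which identifies each multidegree with $|\CPF(n-3,\rev(\mathbf k))|$. Since $\rev$ merely reverses a composition it is an involution on the set of all $\mathbf k$ with $\sum k_i=n-3$, so summing over $\mathbf k$ is the same as summing over all content vectors. Thus, writing $m=n-3$ and $f(m):=\sum_{\mathbf j}|\CPF(m,\mathbf j)|$ for the total number of complete parking functions of height $m$ of any content, the total degree equals $f(m)$, and the theorem reduces to the purely enumerative statement $f(m)=(2m-1)!!$, with base case $f(1)=1=1!!$ (geometrically, the line $\Mbar_{0,4}\cong\bP^1$).

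The identity $(2m-1)!!=(2m-1)\cdot(2m-3)!!$ suggests proving the recursion $f(m)=(2m-1)\,f(m-1)$ by induction, and the insertion and removal maps $\ins$ and $\rem$ already present in the setup are the natural tool. Concretely, I would show that $\rem$ sends a complete parking function of height $m$ to one of height $m-1$, and that each fiber of $\rem$ has exactly $2m-1$ elements, its preimages being produced by $\ins$ ranging over $2m-1$ admissible positions. The corner versus non-corner dichotomy encoded by $\Noncorner$ and $\Corner$ is what I expect to organize this count: inserting as a non-corner versus a corner should account for the two summands in a split such as $2m-1=m+(m-1)$, with the parking (corner) conditions guaranteeing both that exactly these positions yield valid objects and that $\ins$ and $\rem$ are mutually inverse on the relevant sets.

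The main obstacle is precisely this last step: verifying that insertion is well defined into exactly $2m-1$ slots, neither more nor fewer, i.e.\ that the parking constraints are preserved in each admissible position and violated in all others, and that $\rem\circ\ins$ and $\ins\circ\rem$ are the appropriate identities. This requires careful bookkeeping with the precise definition of $\CPF$ and its splitting into $\Good$ and $\Bad$ parts, and it is the one place where the fine combinatorics of height-$m$ parking functions, rather than soft geometric reasoning, must be confronted directly. As an independent check, the same total could instead be obtained by summing the closed forms $\multichoose{n-3}{\rev(\mathbf k)}$; a generating-function evaluation of $\sum_{\mathbf k}\multichoose{n-3}{\rev(\mathbf k)}$ should again yield $(2m-1)!!$, corroborating the recursion.
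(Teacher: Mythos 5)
Your overall architecture matches the paper's: the first equality is a general fact about multiprojective cones, and the second reduces via Theorem \ref{thm:main-1} to the purely enumerative claim that, with $m=n-3$, the total number of column-restricted parking functions of height $m$ is $(2m-1)!!$, proved by the recursion $|\CPF(m)|=(2m-1)\,|\CPF(m-1)|$ via an insertion/removal correspondence with pointed objects $\CPF_\bullet(m-1)$. One genuine difference is in the first step: the paper simply invokes Van der Waerden's theorem (cited as \cite{van} in Section 2), whereas you give a self-contained proof by exhibiting $\bP(C)$ as the birational image of $\bP_X(\mathcal{E})$ with $\mathcal{E}=\bigoplus_i \mathcal{O}(-1)\big|_X$ and pushing forward powers of the relative hyperplane class to obtain the Segre class $\prod_i (1-H_i)^{-1}$, whose degree-$(n-3)$ part integrates to $\sum_{\mathbf{k}}\deg_{\mathbf{k}}$. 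That computation is correct (generic injectivity holds because a point of $\bP(C)$ with all blocks nonzero determines its underlying point of $X$, and the dimensions match: $\dim\bP(C)=2(n-3)-1$), and it buys a citation-free argument for that equality.

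The gap is in the combinatorial half: you correctly identify $\ins$ and $\rem$ and the $\Noncorner/\Corner$ dichotomy as the right tool, but you defer exactly the content that constitutes the proof, and that content is not routine bookkeeping. Concretely: (i) inserting the new largest label at an upper-left corner \emph{changes} the dominance indices of the labels $a<r$ to the right of $r$ (Lemma \ref{lem:corners}), so naive insertion does not produce a column-restricted parking function; the paper must add Step 2 of Definition \ref{def:ins}, shifting whole columns leftward into empty columns, and then verify that this preserves both the Catalan condition and column-restriction. (ii) The inverse map on $\Bad(m)$ must move those columns back to the right, and showing this preserves the Dyck-path property is the technical heart of the paper's proof: it requires the Claim that every label $b<r$ lying to the right of $r$ sits strictly above the diagonal, proved by a delicate counting argument involving the empty columns and the column maxima. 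Nothing in your outline substitutes for these two verifications. Also, your suggested split $2m-1=m+(m-1)$ of insertion slots into non-corner and corner positions is not how the dichotomy works: all $2m-1$ lattice points of the Dyck path of $P$ are admissible insertion points, and the number of corners among them varies with $P$; the partition into $\Noncorner$ and $\Corner$ is pointwise, not of fixed sizes. So: right strategy, valid (and independently interesting) geometric reduction, but the mutual inversion of $\ins$ and $\rem$ --- i.e.\ the actual theorem --- is asserted rather than proved.
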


In Theorem \ref{thm:main-1}, the first two quantities are geometric, the latter two are purely combinatorial.
The Chow ring of a product of projective spaces is generated by the (pull-backs via the projection functions of) hyperplane classes $H_i$ on each of the factors. The \textit{multidegree} of the embedding $\phi_n$ with respect to the tuple $\mathbf{k}=(k_1,\ldots,k_{n-3})\in \mathbb{N}^{n-3}$ is the coefficient of $\prod H_i^{i-k_i}$ in the expression of  $\phi_n(\Mbar_{0,n})$:
\begin{equation}
  [\phi_n(\Mbar_{0,n})] = \sum_\mathbf{k} \deg_\mathbf{k}(\phi_n(\Mbar_{0,n}))\prod H_i^{i-k_i} \in A^\ast(\mathbb{P}^1\times \ldots \times \mathbb{P}^{n-3}).
\end{equation}
By using Poincar\'e duality and the projection formula,   
 $\deg_\mathbf{k}(\phi_n(\Mbar_{0,n}))$ is described as an intersection number on $\Mbar_{0,n}$, the second term in the string of equalities in \eqref{eq:stoeq}.
 
 The \textit{omega class} $\omega_{i+3}$ corresponds to the hyperplane pullback $\phi_n^\ast(H_i)$. Since the building blocks of the embedding $\phi_n$ are the complete linear systems $|\psi_i|: \Mbar_{0,i}\to \mathbb{P}^{i-3}$, the class $\omega_i$ is the pull-back $f_i^\ast(\psi_i)$, where $f_i$ is the forgetful morphism forgetting all points with labels greater than $i$. 
 
 Intersection numbers arising from monomials in $\psi$ classes on $\Mbar_{0,n}$ are governed by the so called {\it string recursion} and  as a result are multinomial coefficients (\cite{kv:iqc}); 
 considering pullbacks of different $\psi$ classes via different forgetful morphisms breaks the $S_n$ symmetry and gives rise to an interesting recursive structure among intersection numbers of monomials of $\omega$ classes.     We define the symbol $\multichoose{m}{j_1,\ldots,j_n}$ to satisfy the corresponding recursion (see Definition \ref{def:asym} below), and obtain tautologically the second equality in \eqref{eq:stoeq},    where $\rev({\bf k})=(k_{n-3},k_{n-2},\ldots,k_1)$ is the tuple formed by reversing $\mathbf{k}$.

 Next, we show that these asymmetric analogs $\multichoose{n}{\bf k}$ of multinomial coefficients exhibit a remarkable combinatorial interpretation in terms of \textit{parking functions}.  Parking functions were first defined by Konheim and Weiss \cite{KW1966}, under the name of ``parking disciplines,'' as solutions to the following problem.  A parking lot with only one entrance on the west side has parking spaces numbered $1,2,\ldots,n$ in order from west to east.  Car number $1$ enters the lot first and drives to its preferred spot and parks there.  Each successive car $2,3,\ldots,n$ attempts to park in their preferred spot, but if it is taken, they keep driving until they find the next empty spot and park there.  For which sets of preferences do all cars end up parked?

  A \textit{parking function} is then defined to be a preference function $f$ on $\{1,2,\ldots,n\}$ mapping each car to its preferred spot, such that all cars end up parked.  It is known that $f$ is a parking function if and only if $$|f^{-1}(\{1,2,\ldots,i\})|\ge i$$ for all $i$.  Parking functions have become a central tool in many areas of recent research, perhaps most notably in the study of diagonal harmonics and $q,t$-analogs of Catalan numbers (see \cite{HaglundBook,HHLRU}). 
  
  For the last equality in Theorem \ref{thm:main-1}, we introduce the notion of \textit{column restriction} on parking functions (see Section \ref{sec:parking-functions}), and define $$\CPF(n,\mathbf{k})$$ to be the set of all column-restricted parking functions $f$ on $\{1,2,\ldots,n\}$ such that $k_{i}=|f^{-1}(i)|$ for all $i$. 
  
  The resulting combinatorial interpretation in Theorem \ref{thm:main-1} is the primary tool we use to prove Theorem \ref{thm:main-2}.  In particular, we show that the total number of column-restricted parking functions of size $n$ is $(2n-1)!!$, giving a new combinatorial interpretation of the double factorial. Theorem \ref{thm:main-2} was conjectured in \cite{MonRan}.

 We are aiming to communicate to an audience both of geometers and combinatorialists. For this reason, Section \ref{sec:bg} contains both geometric and combinatorial background, parts of which may be easily skipped by the expert readers. In Section \ref{sec:emb} we describe the embeddings $\phi_n$ and show that their multidegrees are computed as intersection numbers of $\omega$ classes on $\Mbar_{0,n}$. Section \ref{sec:om} develops the intersection theory of $\omega$ classes and introduces asymmetric multinomial coefficients as computing intersection numbers of monomials of $\omega$ classes. In Section \ref{sec:parking-functions} we make contact with the combinatorics of parking functions to show the last equality in Theorem \ref{thm:main-1} and to give two distinct (but similar) proofs of Theorem \ref{thm:main-2}.

\subsection{Acknowledgements}
This project started during the special program in combinatorial algebraic geometry in 2016. The authors are grateful for the stimulating environment provided by the
Fields Institute. R.C. is partially supported by Simons' collaboration grant 420720. L.M. is partially supported by the EPSRC Early Career Fellowship EP/R023379/1.

\section{Background}\label{sec:bg}
    \subsection{Geometry}

This section is aimed at collecting basic geometric background information and at establishing notation. The book \cite{fulton:it} is a comprehensive reference for Section \ref{subs:it}. An accessible and extensive introduction to the material in Section \ref{sec:m0n} is \cite{kv:iqc}. 

\subsubsection{Chow rings of products of projective spaces and multidegrees} 
\label{subs:it}

For a smooth algebraic variety $Y$, its \defn{Chow ring} $A^*(Y)$ is an algebraic version of De Rham cohomology. The elements of the $i$-th graded piece $A^i(Y)$ are integral linear combinations of irreducible subvarieties of $Y$ of codimension $i$ modulo rational equivalence. For two classes $Z_1\in A^i,\, Z_2\in A^j$, their product in $Z_1\cdot Z_2\in A^{i+j}(Y)$ is  the class of the intersection of transversely intersecting representatives of $Z_1, Z_2$.

For the product of projective spaces $\bP^{\bf b}=\bP^{b_1}\times\ldots\times \bP^{b_n}$, let $p_i:\bP^{\bf b}\to \bP^{b_i}$ be the natural projection on the $i$-th factor. We define the divisor classes $H_1,\ldots,H_n$ on $\bP^{\bf b}$ to be the pullbacks of hyperplanes in $\bP^{b_1},\ldots,\bP^{b_n}$ respectively:
\begin{equation}
  H_i:=p_i^* H_{\bP^{b_i}},  
\end{equation}
where $H_{\bP^{b_i}}$  is the class of a hyperplane on $\bP^{b_i}$.

The Chow ring $A^*(\bP^{\bf b})$ of $\bP^{\bf b}$ is generated by the classes  $H_1,\ldots, H_n$:
\begin{equation}
 A^*(\bP^{\bf b}) \simeq \bZ[H_1,\ldots,H_n]/\langle H_1^{b_1+1},\ldots,H_n^{b_n+1}\rangle.   
\end{equation}

\begin{defi}
  Let $Z\subset \bP^{\bf b}$ be a closed subvariety of the product of projective spaces. For any integer vector ${\bf k}=(k_1,\ldots, k_n)\in \bZ_{\geq 0}^n$ the {\bf degree of $Z$ of  index $\bf k$} is 
\begin{equation}
\deg_{\bf k}(Z) := \int_{\bP^{\bf b}} Z\cdot \prod_{i=1}^n H_i^{k_i},  
\end{equation}
where, in analogy with De Rham cohomology, we use integral notation to denote  the degree of the $0$-dimensional part of a cycle.
The collection of degrees of index ${\bf k}$, for all ${\bf k}\in \bZ_{\geq 0}^n,$ is called the {\bf multidegree} of $Z$.
\end{defi}
 If the dimension of $Z\cdot \prod_{i=1}^n H_i^{k_i}$ is nonzero, then by definition $\deg_{\bf k}(Z)=0$. Hence the degree of index $\bf k$ of $Z$ may be non-zero only  when $\sum k_i=\dim(Z)$. By Poincar\'e duality,  the class of $Z$ in the Chow ring $A^*(\bP^{\bf b})$ is determined by its multidegree:
$$
[Z]= \sum_{{\bf k},\, |{\bf k}|=\dim(Z)} \deg_{\bf k}(Z) \cdot H_1^{b_1-k_1}\ldots H_n^{b_n-k_n} \in A^{|{\bf b}|-\dim(Z)}(\bP^{\bf b}).
$$
If $\phi:X\to \bP^{\bf b}$ is a closed embedding,  by the projection formula (\cite{fulton:it}, Proposition 2.5) the degree of index $\bf k$ of the image $\phi(X)$ is equal to:
\begin{equation}
\deg_{\bf k} (\phi(X)) =  \int_X \prod_{i=1}^n (\phi^*H_i^{k_i}).   
\end{equation}

For $Z\subseteq  \bP^{\bf b}$ a closed subvariety, let $Con(Z)\subseteq \bA^{b_1+1}\times\ldots\times\bA^{b_n+1}$ be the affine cone over $Z$.
The following theorem of Van Der Waerden relates the multidegrees of $X$ with the degree of the projectivization $\mathbb{P}(Con(Z))$.
\begin{thm}[\cite{van}]
The degree $\deg(\mathbb{P}(Con(Z)))$ is equal to the sum of all multidegrees of $X$:
\begin{equation}
 \deg(\mathbb{P}(Con(Z)))= \sum_{{\bf k},\, |{\bf k}|=\dim(Z)}\deg_{\bf k}(Z).   
\end{equation}
\end{thm}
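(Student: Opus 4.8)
The plan is to realize the affine cone $Con(Z)$ as the image of the total space of a rank-$n$ vector bundle on $Z$, so that $\mathbb{P}(Con(Z))$ becomes birational to a projective bundle whose relative hyperplane class is the restriction of the ambient $\sO(1)$; the degree then falls out of the projective bundle formula together with a Segre class computation. Concretely, let $L_i$ be the restriction to $Z$ of the pullback $p_i^*\sO_{\bP^{b_i}}(-1)$, so that $c_1(L_i)=-H_i|_Z$. The fiber of $L_i$ over a point of $Z$ is the line in $\bA^{b_i+1}$ spanned by the $i$-th homogeneous coordinate vector, and the total space of $E:=\bigoplus_{i=1}^n L_i$ maps to $\bA^{b_1+1}\times\cdots\times\bA^{b_n+1}$ with image exactly $Con(Z)$, this map being an isomorphism over the locus where every coordinate vector is nonzero and hence birational. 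Since the fiberwise scaling of $E$ is induced by the diagonal $\mathbb{G}_m$-action on $\bA^{|{\bf b}|+n}$, projectivizing gives a birational morphism $\bP(E)\to \mathbb{P}(Con(Z))\subset \bP^{|{\bf b}|+n-1}$ under which $\sO_{\bP^{|{\bf b}|+n-1}}(1)$ pulls back to $\xi:=c_1(\sO_{\bP(E)}(1))$. As $\dim \bP(E)=\dim Z+n-1=\dim \mathbb{P}(Con(Z))$ and the morphism has degree one, the projection formula yields $\deg(\mathbb{P}(Con(Z)))=\int_{\bP(E)}\xi^{\dim Z+n-1}$.

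Next I would apply the defining property of Segre classes in Fulton's convention, $\pi_*(\xi^{\,n-1+j})=s_j(E)$ for $\pi:\bP(E)\to Z$, to get $\deg(\mathbb{P}(Con(Z)))=\int_Z s_{\dim Z}(E)$. Because $s(E)=c(E)^{-1}=\prod_{i=1}^n(1-H_i)^{-1}=\prod_{i=1}^n(1+H_i+H_i^2+\cdots)$ on $Z$, grouping monomials by total degree shows that its degree-$\dim Z$ component is $s_{\dim Z}(E)=\sum_{|{\bf k}|=\dim Z}\prod_{i=1}^n H_i^{k_i}$. Integrating term by term and invoking the projection formula $\int_Z\prod_i (H_i|_Z)^{k_i}=\int_{\bP^{\bf b}}[Z]\cdot\prod_i H_i^{k_i}=\deg_{\bf k}(Z)$ gives $\deg(\mathbb{P}(Con(Z)))=\sum_{|{\bf k}|=\dim Z}\deg_{\bf k}(Z)$, as claimed.

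The step I expect to demand the most care is the geometric identification in the first paragraph: verifying that the total space of $E$ surjects onto $Con(Z)$ with its reduced multihomogeneous scheme structure, that the projectivization is genuinely birational onto $\mathbb{P}(Con(Z))$, and that the ambient $\sO(1)$ matches $\xi$ — all of which hinge on keeping the line-subbundle versus quotient-bundle convention for $\bP(E)$ (and the corresponding Segre-class sign) consistent. Small cases such as $Z=\bP^1\times\bP^1$, where both sides equal $1$, provide a useful check. An entirely parallel argument avoiding bundles proceeds through Hilbert series: the single-graded Hilbert polynomial of the coordinate ring of $Con(Z)$ by total degree is obtained from the $\bZ^n$-graded Hilbert polynomial of $Z$ by summing over multidegrees of fixed total weight, and comparing leading coefficients via a Dirichlet-integral estimate of $\sum_{|{\bf k}|=t}\prod_i k_i^{j_i}$ reproduces the same identity.
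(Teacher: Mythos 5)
The paper itself gives no proof of this statement: it is quoted directly from Van der Waerden \cite{van}, whose classical argument runs through Hilbert functions, comparing the $\bZ^n$-graded Hilbert polynomial of the multihomogeneous coordinate ring of $Z$ with the $\bZ$-graded Hilbert polynomial of the coordinate ring of $Con(Z)$ and matching leading coefficients. That classical route is essentially what you sketch in your closing paragraph, so your main argument is a genuinely different, geometric proof, and it is correct. The one step you rightly flag as delicate does hold up: the image of the total space of $E=\bigoplus_i p_i^*\sO_{\bP^{b_i}}(-1)|_Z$ lands inside $Con(Z)$ because every multihomogeneous element of the ideal of $Z$ vanishes on tuples $(\lambda_1\hat z_1,\ldots,\lambda_n\hat z_n)$ of rescaled lifts; it is dense because $Con(Z)$ is irreducible when $Z$ is; and it is closed because the total space of $\bigoplus_i p_i^*\sO(-1)$ over all of $\bP^{\bf b}$ is the product of blow-ups $\prod_i \mathrm{Bl}_0\,\bA^{b_i+1}$, hence proper over $\bA^{|{\bf b}|+n}$ --- although you could bypass this entirely by working only with the induced morphism $\bP(E)\to\bP^{|{\bf b}|+n-1}$, which is proper for free since $\bP(E)$ is projective. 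Your convention bookkeeping is also consistent: in Fulton's ``lines'' convention the ambient hyperplane class pulls back to $\xi=c_1(\sO_{\bP(E)}(1))$, and $\pi_*\xi^{\,n-1+j}=s_j(E)$ with $s(E)=c(E)^{-1}=\prod_i(1-H_i)^{-1}$, so every term in $s_{\dim Z}(E)$ is a genuine monomial $\prod_i H_i^{k_i}$ with coefficient $+1$, as positivity of the degree demands. As for what each approach buys: the Hilbert-polynomial proof is elementary, commutative-algebraic, and is the one actually found in the cited reference; your bundle argument explains the identity geometrically --- $\bP(E)$ is a resolution of $\mathbb{P}(Con(Z))$ and the multidegrees appear as Segre numbers of $E$ --- and it generalizes verbatim to cones formed with respect to any tuple of very ample line bundles on $Z$, not just the restrictions of the coordinate $\sO(1)$'s.
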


\subsubsection{The moduli space $\Mbar_{0,n}$ and its intersection theory}\label{sec:m0n}

For $n \geq 3$, the moduli space $M_{0,n}$ parameterizes ordered $n$-tuples of distinct points on $\bP^1$. We say that two $n$-tuples $(p_1,\ldots,p_n)$ and $(q_1,\ldots, q_n)$ are equivalent if there exists a projective transformation $g \in \PGL(2,\bC)$ such that:
$$
(q_1,\ldots, q_n) = (g(p_1),\ldots,g(p_n)).
$$ 
Since a projective transformation can map three chosen points on $\bP^1$ to any other three points and is uniquely determined by their image,  the dimension of  $M_{0,n}$ equals  $n-3$.

The space $M_{0,n}$ is not compact. Intuitively, this is because the points $p_i$ must all be distinct. There are a number of compactifications of $M_{0,n}$, including those described by Losev-Manin~\cite{losevmanin2000} and Keel~\cite{keel1992}. But the first and most well-known is $\Mbar_{0,n}$, the Deligne-Mumford compactification described explicitly by Kapranov~\cite{Ka2,Ka1}.

The moduli space~$\Mbar_{0,n}$ parametrizes families of \emph{stable $n$-pointed rational curves}.
\begin{defi}
 A {\bf stable rational $n$-pointed curve} is a tuple $(C, p_1, \ldots, p_n)$, where:
 \begin{enumerate}
 \item $C$ is a connected curve of arithmetic genus $0$ with at worst simple nodal singularities;
 \item $p_1, \ldots, p_n$ are distinct nonsingular points on $C$;
 \item each irreducible component of $C$ has at least three special points (either marked points or nodes).
 \end{enumerate}
\end{defi}

For the stable curve $(C, p_1, \ldots, p_n)$ we define its dual graph to have a vertex for each irreducible component of $C$, an edge between two vertices for each node between corresponding components, and a labeled half-edge for each marked point adjacent to the appropriate vertex. For $C$ to have arithmetic genus 0 the dual graph must be a tree.

The boundary $\Mbar_{0,n} \smallsetminus M_{0,n}$ is a simple normal crossing divisor. Intuitively this means that the irreducible components of the boundary locally intersect as coordinate hyperplanes in $\bC^n$. The boundary of $\Mbar_{0,n}$ has a natural stratification indexed by the dual graphs. 

The codimension of the stratum $\delta(\Gamma)$ in $\Mbar_{0,n}$ corresponding to the dual graph $\Gamma$ equals the number of edges of $\Gamma$.
Therefore, the irreducible divisorial components of the boundary of $\Mbar_{0,n}$ are given by dual graphs with one edge and  two vertices. These graphs are indexed by  partitions of the set $\{1,\ldots,n\}$ into two subsets $I$ and $I^c$, each of cardinality at least 2. We denote the corresponding irreducible boundary divisor of $\Mbar_{0,n}$ by $\delta_I$ (or equivalently by $\delta_{I^c}$). See Figure~\ref{m05} for an example of boundary divisors on $\Mbar_{0,5}$.

\begin{figure}
\begin{center}
\begin{tikzpicture}[line cap=round,line join=round,x=1.3cm,y=1.3cm]
\clip(-3.213019406818031,0.8782917403562502) rectangle (7.384083619169197,3.2810012383276104);
\draw [line width=1.pt] (-3.02,1.62)-- (0.08,2.84);
\draw [line width=1.pt] (2.30086907120197,1.6351813339818149)-- (-0.8060421312158056,2.83747197284204);
\draw (0.2,2.3) node[anchor=north west] {$p_i$};
\draw (1.13,1.95) node[anchor=north west] {$p_j$};
\draw (-2.4261058157793753,1.917017680527275) node[anchor=north west] {$p_k$};
\draw (-1.754606218093056,2.1793222108734933) node[anchor=north west] {$p_l$};
\draw (-1.104090982834434,2.4416267412197117) node[anchor=north west] {$p_m$};
\draw (-1.1,1.6652053313949053) node[anchor=north west] {$\delta_{\{i,j\}}=\delta_{\{k,l,m\}}$};
\draw [line width=1.pt] (3.9216621479755296,2.0839374497107896)-- (6.49225214797553,2.0839374497107896);
\draw [line width=1.pt] (3.9216621479755296,2.0839374497107896)-- (6.49225214797553,2.0839374497107896);
\draw [line width=1.pt] (3.9216621479755296,2.0839374497107896)-- (6.49225214797553,2.0839374497107896);
\draw [line width=1.pt] (3.9216621479755296,2.0839374497107896)-- (3.460007845189768,2.4940876472889553);
\draw [line width=1.pt] (3.9216621479755296,2.0839374497107896)-- (3.460007845189768,1.673787252132624);
\draw [line width=1.pt] (3.9216621479755296,2.0839374497107896)-- (3.35509,2.08394);
\draw [line width=1.pt] (3.35509,2.08394)-- (6.49225214797553,2.0839374497107896);
\draw [line width=1.pt] (6.49225214797553,2.0839374497107896)-- (7.037841639112188,2.336704929081224);
\draw [line width=1.pt] (6.49225214797553,2.0839374497107896)-- (7.037841639112188,1.8311699703403552);
\draw (6.744060565124424,2.651470365496686) node[anchor=north west] {$i$};
\draw (6.869966739690609,2.3157205666535265) node[anchor=north west] {$j$};
\draw (3.3,2.5) node[anchor=north west] {$k$};
\draw (3.3,2.16) node[anchor=north west] {$l$};
\draw (3.45,1.75) node[anchor=north west] {$m$};
\draw (4.5,1.6022522441118128) node[anchor=north west] {$\Gamma_{\{i,j\}}=\Gamma_{\{k,l,m\}}$};
\begin{scriptsize}
\draw [fill=black] (-0.36252777544152126,2.6658439077294656) circle (2.0pt);
\draw [fill=black] (-2.461425454413705,1.8398261114888) circle (2.0pt);
\draw [fill=black] (-1.8037498083866736,2.0986533012155673) circle (2.0pt);
\draw [fill=black] (-1.1344761051912589,2.362044887634408) circle (2.0pt);
\draw [fill=black] (0.5146934881829066,2.3263829947030334) circle (2.0pt);
\draw [fill=black] (1.4503565896132027,1.9643066704420922) circle (2.0pt);
\draw [fill=black] (3.9216621479755296,2.0839374497107896) circle (3.0pt);
\draw [fill=black] (6.49225214797553,2.0839374497107896) circle (3.0pt);
\end{scriptsize}
\end{tikzpicture}
\caption{$\Mbar_{0,5}$ has $\binom{5}{2}=10$ boundary divisors $\delta_{\{i,j\}}$, with $\{i,j\}\subset [5]$.}\label{m05}
\end{center}
\end{figure}
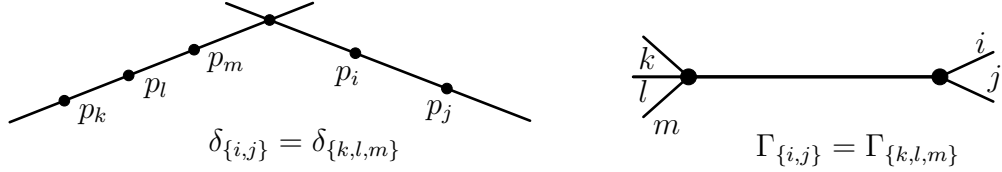
There are natural forgetful morphisms
\begin{equation}
    \pi_{n+1}:\Mbar_{0,n+1}\to \Mbar_{0,n},
\end{equation}
defined by forgetting the point marked $p_{n+1}$ and stabilizing the resulting curve if necessary. The morphism $\pi_{n+1}$ also functions as a universal family for $\Mbar_{0,n}$.

For $1\leq i \leq n$, the $i$-th tautological section morphism
\begin{equation}
    s_i:\Mbar_{0,n}\to \Mbar_{0,n+1}
\end{equation}
assigns to a curve $(C, p_1, \ldots, p_n)$ the $(n+1)-$pointed curve obtained by replacing the mark $p_i$ by a node connecting to a new rational component hosting the marks $p_i, p_{n+1}$.

\begin{defi}\label{def:psi}
 The {\bf $i$-th cotangent line bundle} $\mathbb{L}_i \to \Mbar_{0,n}$ 
 is defined to be
 \begin{equation}
     \mathbb{L}_i:= s_i^\ast(\omega_{\pi_{n+1}}),
 \end{equation}
 where $\omega_{\pi_{n+1}}$ denotes the relative dualizing sheaf of the universal family. Define 
 \begin{equation}
     \psi_i=c_1(\mathbb{L}_i)
 \end{equation} to be the first Chern class of $\mathbb{L}_i$.
\end{defi}
 Informally, one may think of $\mathbb{L}_i$ as the line bundle whose fiber over a point $[C,p_1,\ldots,p_n]$ is the cotangent space of $C$ at the marked point $p_i$.
 
 One may show that (\cite{k:psi}, Corollary 1.2.7)
\begin{equation}\label{van}
    \psi_i \cdot \delta_{\{i,j\}} = 0.
\end{equation}

We have an important comparison between the class $\psi_i$ on $\Mbar_{0,n+1}$ and the pullback of the class $\psi_i$ on $\Mbar_{0,n}$ via the forgetful morphism $\pi_{n+1}$ (\cite{AC:calc}, Lemma 3.1):

\begin{equation} \label{pbrel}
 \psi_i = \pi_{n+1}^\ast (\psi_i) + \delta_{\{i, n+1\}}.   
\end{equation}

Iterated applications of \eqref{pbrel}, \eqref{van}, and the projection formula give intersection numbers of $\psi$ classes remarkable combinatorial structure. Let $\St:\Sym^d\to \Sym^{d-1}$ be the linear transformation of the space of polynomials defined on monomials as

$$
\St\left(\prod x_i^{k_i}\right)=\sum_{i,k_i\ne 0} x_i^{k_i-1}\prod_{j\ne i}x_j^{k_j},
$$
and extended by linearity. The notation $\St$ is given due to the \defn{String equation} which provides a recursive formula for the intersection numbers of $\psi$ classes (\cite{k:psi}, Section 1.4). If $\prod \psi_i^{k_i}$ is a monomial in $\psi$ classes with $k_a=0$ for some $1\leq a \leq n$, then
\begin{equation}\label{string}
    (\pi_a)_* \prod \psi_i^{k_i} = \St \left(\prod \psi_i^{k_i} \right).
\end{equation}

If $\sum k_i =n-3$, we have (\cite{k:psi}, Lemma 1.5.1)
\begin{equation}
    \int_{\Mbar_{0,n}}\prod \psi_i^{k_i} = \St^{n-3} \left(\prod x_i^{k_i}\right)= \binom{n-3}{k_1,\ldots,k_n}.
\end{equation}
Here, the notation $\binom{n-3}{k_1,\ldots,k_n}$ refers to the multinomial coefficient defined in the next section.

\subsection{Combinatorics}\label{sec:comb-background}

In this section we provide some combinatorial background and notation.
We first recall some classical facts about multinomial coefficients, which we will generalize to asymmetric versions that enumerate a set of parking functions.  A good introductory reference for parking functions is \cite{HaglundBook}.

\subsubsection{Multinomial coefficients }
A \textbf{weak composition} of $n$ is a tuple $(k_1,\ldots,k_j)$ of nonnegative integers such that $\sum_{i=1}^j k_i=n$.  We say that $j$ is the \textbf{length} of the composition, and we write $\Comp(n,j)$ to denote the set of all weak compositions of $n$ having length $j$.  We often simply write $\mathbf{k}$ in boldface to denote a composition $(k_1,\ldots,k_j)$ if the length is understood.

Let $\mathbf{k}\in \Comp(n,j)$.  The \defn{multinomial coefficient} $\binom{ n}{\mathbf{k}}=\binom{n}{k_1,\ldots,k_j}$ is the coefficient of $x_1^{k_1}\cdots x_j^{k_j}$ in the expansion of $$(x_1+\cdots+x_j)^n.$$ This naturally generalizes the notion of a binomial coefficient.  It is well-known that the multinomial coefficients satisfy the explicit formula $$\binom{n}{k_1,\ldots,k_j}=\frac{n!}{k_1!k_2!\cdots k_j!}$$ and the recursion \begin{equation}\label{eq:recm}
    \binom{n}{k_1,\ldots,k_j}=\sum_{i=1}^{j} \binom{n-1}{k_1,\ldots,k_{i-1},k_{i}-1,k_{i+1},\ldots,k_j},
    \end{equation}
    where we define a multinomial coefficient $\binom{n}{\bf k}$ to be $0$ if any of the parts $k_i$ are negative.

In fact, the multinomial coefficients may be defined by the recursion (\ref{eq:recm}) along with the initial conditions $\binom{0}{0,0,\ldots,0}=1$.  Notice that the operator $\St$ from Section \ref{sec:m0n} is a reformulation of recursion \eqref{eq:recm}.

\subsubsection{Parking functions and Catalan compositions}

We will be primarily interested in compositions in $\Comp(n,n)$ with the following property.

\begin{defi}
   A composition $\mathbf{k}=(k_1,\ldots,k_n)\in \Comp(n,n)$ is \defn{Catalan} if for all $j<n$, we have $$k_1+k_2+\cdots+k_j\ge j.$$  
\end{defi}

A \defn{Dyck path} of height $n$ is a path in the first quadrant of the plane from $(0,0)$ to $(n,n)$, using only up or right steps of length $1$, which always stays weakly above the diagonal line $y=x$.  Notice that Dyck paths of height $n$ are naturally in bijection with Catalan compositions of length $n$, by setting $k_j$ to be the number of up-steps taken on the line $x=j-1$.  It is well-known that the number of Dyck paths of height $n$ is the Catalan number $C_n=\frac{1}{n+1}\binom{2n}{n}$, and hence $C_n$ is also the number of Catalan compositions of length $n$. 

A \defn{parking function} is a Dyck path along with a labeling of all unit squares having an up-step to its left with the numbers $1,2,\ldots,n$ in some order, such that in each column the numbers are increasing from bottom to top.  An example of a parking function for $n=6$ is shown in Figure \ref{fig:parking-function}.

\begin{figure}
	\begin{center}
		\includegraphics{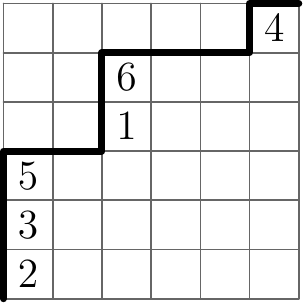}
	\end{center}
	
	\caption{\label{fig:parking-function} A parking function of size $6$.}
	
\end{figure}

Notice that a parking function may be specified by the sets of entries in each column from left to right.  The columns above are $\{2,3,5\},\{\},\{1,6\},\{\},\{\},\{4\}$.  Given this sequence, we can reconstruct the parking function by placing the column entries in increasing order in each column, with one entry per row going from bottom to top.  Then we draw the southeast-most path that lies northwest of the column labels to obtain the Dyck path.

Notice that the resulting path is a Dyck path, giving a valid parking function, if and only if the \defn{sequence of column heights} is Catalan.  In this example, the sequence of column heights is the composition $(3,0,2,0,0,1)$.

\begin{remark}
  The definition of parking function given here is equivalent to the historical definition in terms of parking cars given in the introduction.  In particular, we may think of the numbers in the columns as being the cars and the column that they reside in as their preferred spot.  In the above diagram, cars $2,3,5$ prefer spot number $1$, cars $1,6$ prefer spot $3$, and car $4$ prefers spot $6$.
\end{remark}

\section{Embeddings of $\Mbar_{0,n}$ in products of projective spaces}    \label{sec:emb}

In this section we describe an embedding $\phi_n:\Mbar_{0,n}\hookrightarrow\bP^1\times\cdots\times\bP^{n-3}$, first  obtained in \cite{KeT}. The embedding $\phi_n$ depends on two well-studied maps from $\Mbar_{0,n}$, namely the forgetful map $\pi_n:\Mbar_{0,n}\to\Mbar_{0,n-1}$ and Kapranov's map $|\psi_n|:\Mbar_{0,n}\to\bP^{n-3}$. 

The forgetful map $\pi_n:\Mbar_{0,n}\to\Mbar_{0,n-1}$ is the morphism given by forgetting the last point of $[C,p_1,\ldots p_n]$  and stabilizing the curve, i.e. contracting the components which have less than three special points and remembering the points of intersection.

 The Kapranov map  is  given by the linear system $|\psi_n|$, where $\psi_n$ is the first Chern class of the $n$-th cotangent line bundle from Definition \ref{def:psi}. This map is first described in detail by Kapranov~\cite{Ka1}: he shows that $\Mbar_{0,n}$, identified with the universal family over $\Mbar_{0,n-1}$, corresponds to the family $\mathcal{U}\subseteq \Mbar_{0,n-1}\times \mathbb{P}^{n-3}$ of rational curves through $(n-1)$ points in general position in $\mathbb{P}^{n-3}$. The cotangent line bundle $\mathbb{L}_n$ is identified with $\rho_2^\ast(\mathcal{O}_{\mathbb{P}^{n-3}}(1))$, where $\rho_2:\Mbar_{0,n-1}\times \bP^{n-3}\to \bP^{n-3}$ denotes the projection onto the second factor; this implies in particular that $|\psi_n| = \rho_2:\Mbar_{0,n}\to\bP^{n-3}$.

\begin{thm}\label{thm:embedding}\cite[Cor 2.7]{KeT} The map $\Phi_n=(\pi_n,\psi_n):\Mbar_{0,n}\rightarrow \Mbar_{0,n-1}\times \bP^{n-3}$ is a closed embedding.
\end{thm}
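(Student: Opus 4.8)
The plan is to recognize $\Phi_n$ as the inclusion of the universal family into the product, so that the closed embedding property becomes automatic from Kapranov's description recalled just above the statement. Kapranov's theorem provides an isomorphism $\Mbar_{0,n}\xrightarrow{\sim}\U$ onto the family $\U\subseteq\Mbar_{0,n-1}\times\bP^{n-3}$, and the first step is to record how the two projections restrict to $\U$ under this isomorphism. On one hand, $\U$ is the universal family over $\Mbar_{0,n-1}$, whose structure morphism is by definition the composite with the first projection $\rho_1$; under the identification this structure morphism is the forgetful map, so $\pi_n=\rho_1|_\U$. On the other hand, the identification $\mathbb{L}_n\cong\rho_2^\ast(\sO_{\bP^{n-3}}(1))$ quoted above is exactly the assertion that the Kapranov map is the second projection, i.e.\ $|\psi_n|=\rho_2|_\U$.

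Given these two identifications of the components, the second step is the purely formal observation that
\[
\Phi_n=(\pi_n,\psi_n)=(\rho_1|_\U,\rho_2|_\U)
\]
is nothing more than the inclusion $\U\hookrightarrow\Mbar_{0,n-1}\times\bP^{n-3}$ precomposed with the isomorphism $\Mbar_{0,n}\xrightarrow{\sim}\U$. Since the composite of an isomorphism with a closed immersion is again a closed immersion, the theorem reduces to checking that $\U$ is closed in the product.

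The third step supplies this closedness: because $\Mbar_{0,n}$ is projective, hence proper, and the target $\Mbar_{0,n-1}\times\bP^{n-3}$ is separated, the morphism $\Phi_n$ is proper, so its image --- which is exactly $\U$ --- is closed. As $\Phi_n$ factors as an isomorphism onto $\U$ followed by the inclusion of this closed image, it is a closed embedding. The one point I expect to require genuine care is the \emph{scheme-theoretic} (not merely set-theoretic) matching of the components $\pi_n$ and $\psi_n$ with the projections $\rho_1$ and $\rho_2$: one must check that Kapranov's isomorphism intertwines these as morphisms of schemes. Both compatibilities are already packaged in the statements recalled before the theorem (the universal-family description fixes the first, and the line-bundle identification $\mathbb{L}_n\cong\rho_2^\ast\sO(1)$ fixes the second), so no new geometric input is needed. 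If one instead wanted a self-contained argument avoiding the explicit identification with $\U$, the alternative would be to verify that $\Phi_n$ is injective on closed points and unramified, and then invoke properness to conclude it is a closed immersion; but the assembly above is cleaner and is precisely what the recalled results make available.
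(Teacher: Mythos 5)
Your argument is correct, but note that the paper itself offers no proof of this statement: Theorem \ref{thm:embedding} is imported verbatim from Keel--Tevelev \cite[Cor 2.7]{KeT}, so there is no internal argument to compare against. What you have done is reassemble the theorem from the Kapranov facts that the paper recalls in the paragraph immediately preceding it, and that assembly is sound: once one grants (i) the identification of $\Mbar_{0,n}$ with the universal family $\U\subseteq\Mbar_{0,n-1}\times\bP^{n-3}$ \emph{as a family over} $\Mbar_{0,n-1}$, so that $\rho_1|_{\U}$ corresponds to $\pi_n$, and (ii) the identification $\mathbb{L}_n\cong\rho_2^\ast(\sO_{\bP^{n-3}}(1))$ together with the fact that pullback of global sections is an isomorphism, so that the \emph{complete} linear system $|\psi_n|$ really is $\rho_2|_{\U}$ rather than a linear projection of it, then your formal steps are all valid: a morphism to a product is determined by its two components, a morphism from a proper variety to a separated one is proper and hence has closed image, and an isomorphism onto a closed subvariety composed with the inclusion is a closed immersion. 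The one caveat, which you rightly flag yourself, is that the argument is exactly as strong as the scheme-theoretic precision of Kapranov's statements: point (ii) in particular requires $h^0(\Mbar_{0,n},\mathbb{L}_n)=n-2$, which is part of Kapranov's theorem \cite{Ka1} and not a formal consequence of the line-bundle isomorphism alone. In effect you trade the paper's single citation to \cite{KeT} for a citation to Kapranov plus transparent glue; both are legitimate, and yours has the advantage of making visible why the statement holds given the geometry recalled in Section \ref{sec:emb}.
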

By applying Theorem~\ref{thm:embedding} iteratively, one obtains the following corollary.
\begin{corollary}\label{cor:embedding} We have a closed embedding $\phi_n:\Mbar_{0,n}\hookrightarrow \bP^1\times\bP^2\times\ldots \times\bP^{n-3}$.
\end{corollary}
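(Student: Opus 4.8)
The plan is to prove the statement by induction on $n$, bootstrapping the single-step embedding of Theorem~\ref{thm:embedding} into a full factorization. The base case is $n=3$: here $\Mbar_{0,3}$ is a single point, and the target $\bP^1\times\cdots\times\bP^{n-3}$ is the empty product of projective spaces, which we take by convention to be a point, so that the unique morphism between them is trivially a closed embedding. (If one prefers a nonempty first case, one can instead start at $n=4$, where $\phi_4=\psi_4\colon\Mbar_{0,4}\xrightarrow{\sim}\bP^1$ is Kapranov's isomorphism and hence a closed embedding.)

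For the inductive step I would assume that a closed embedding $\phi_{n-1}\colon\Mbar_{0,n-1}\hookrightarrow\bP^1\times\cdots\times\bP^{n-4}$ has already been constructed, and define $\phi_n$ as the composite
\[
\Mbar_{0,n}\xrightarrow{\ \Phi_n\ }\Mbar_{0,n-1}\times\bP^{n-3}\xrightarrow{\ \phi_{n-1}\times\mathrm{id}\ }\bigl(\bP^1\times\cdots\times\bP^{n-4}\bigr)\times\bP^{n-3},
\]
where $\Phi_n=(\pi_n,\psi_n)$ is the map of Theorem~\ref{thm:embedding}. Since the new projective factor $\bP^{n-3}$ has dimension exactly $n-3$, the target is canonically identified with $\bP^1\times\cdots\times\bP^{n-3}$, so the indices line up as claimed and $\phi_n$ has the asserted source and target.

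It then remains to verify that this composite is again a closed embedding, which is where the only (minor) technical content lies. I would invoke two standard stability properties of closed immersions. First, they are stable under base change: the map $\phi_{n-1}\times\mathrm{id}_{\bP^{n-3}}$ is exactly the base change of $\phi_{n-1}$ along the projection $\bigl(\bP^1\times\cdots\times\bP^{n-4}\bigr)\times\bP^{n-3}\to\bP^1\times\cdots\times\bP^{n-4}$ (one checks that the corresponding fiber product is $\Mbar_{0,n-1}\times\bP^{n-3}$ with the evident map), so it is a closed immersion. Second, compositions of closed immersions are closed immersions, so $\phi_n=(\phi_{n-1}\times\mathrm{id})\circ\Phi_n$ is a closed embedding, completing the induction.

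I do not expect a genuine obstacle here: all of the geometric difficulty is absorbed into Theorem~\ref{thm:embedding}, and the corollary is a formal consequence obtained by iteration. The points requiring the most care are purely bookkeeping — confirming the dimension of the newly adjoined factor is $n-3$ so the sequence $\bP^1\times\cdots\times\bP^{n-3}$ is reproduced, and correctly identifying $\phi_{n-1}\times\mathrm{id}$ as a base change so that the stated stability properties apply verbatim.
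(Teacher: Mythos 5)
Your proof is correct and takes essentially the same approach as the paper: the paper also iterates Theorem~\ref{thm:embedding}, defining $\phi_n$ as the composition of $\Phi_n$ with the maps $\Phi_i\times\mathrm{id}$ (displayed as the top row of a large commutative diagram, with base case $\phi_4=\Phi_4$ an isomorphism), which is precisely your induction unrolled. The only difference is that you spell out the standard facts (closed immersions are stable under products with the identity and under composition) that the paper leaves implicit.
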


\begin{proof}
Observe the commutative diagram \eqref{bigdiag}; all vertical arrows are natural projection functions, and any unlabeled horizontal arrow is the product of the unique labeled horizontal arrow below and the identity function on the remaining factor. For any $n\geq 4$, the embedding $\phi_n$ is obtained as the composition of $ \Phi_n$ with all horizontal arrows following it. The map $\phi_4= \Phi_4$ is an isomorphism.
\end{proof} 
\begin{equation}\label{bigdiag}
{\tiny
\xymatrix{
\Mbar_{0,n} \ar[r]^{\hspace{-1cm}\Phi_n}\ar[dr]_{\hspace{.8cm} f_n=\pi_n} \ar@/_3pc/[ddrr]_{f_{n-1}}\ar@/_6pc/[ddddrrrr]_{f_{4}}
&
\Mbar_{0,n-1}\times \mathbb{P}^{n-3} \ar[r]\ar[d]
&
\Mbar_{0,n-2}\times \mathbb{P}^{n-4}\times  \mathbb{P}^{n-3} \ar[r]\ar[d]
&
\ldots \ar[r]\ar[d]
&
\Mbar_{0,4}\times \mathbb{P}^{2} \times \ldots \times   \mathbb{P}^{n-3} \ar[r]\ar[d]
&
\mathbb{P}^{1}  \times \ldots \times  \mathbb{P}^{n-3}\ar[d]
\\
&
\Mbar_{0,n-1} \ar[r]^{\hspace{-1cm}\Phi_{n-1}}\ar[dr]_{\pi_{n-1}}
&
\Mbar_{0,n-2}\times \mathbb{P}^{n-4} \ar[r]\ar[d]
&
\ldots \ar[r]\ar[d]
&
\Mbar_{0,4}\times \mathbb{P}^{2} \times \ldots \times   \mathbb{P}^{n-4} \ar[r]\ar[d]
&
\mathbb{P}^{1}  \times \ldots \times  \mathbb{P}^{n-4}\ar[d]
\\
&
&
\Mbar_{0,n-2} \ar[r]^{\hspace{-.1cm}\Phi_{n-2}}\ar[dr]_{\pi_{n-2}}
&
\ldots \ar[r]\ar[d]
&
\Mbar_{0,4}\times \mathbb{P}^{2} \times \ldots \times   \mathbb{P}^{n-5} \ar[r]\ar[d]
&
\mathbb{P}^{1}  \times \ldots \times  \mathbb{P}^{n-5}\ar[d]\\
&&&\ldots\ar[dr]_{\pi_5}&\ldots\ar[d]&\ldots\ar[d]\\
&&&&\Mbar_{0,4}\ar[r]^{\hspace{-.1cm}\Phi_4 } & \mathbb{P}^1.
}}
\end{equation}

Let  $H_1,\ldots,H_{n-3}$ as before be the pullbacks of hyperplane classes in $\bP^{1},\ldots,\bP^{n-3}$ respectively. Let $f_i: \Mbar_{0,n} \to \Mbar_{0,i}$ be the forgetful map which is forgetting points labelled by $i+1,\ldots,n$, i.e.
\begin{equation}
f_i = \pi_{i+1}\circ\cdots\circ\pi_n.
\end{equation}

By commutativity of \eqref{bigdiag}, the class  $\phi_n^*H_i$ on $\Mbar_{0,n}$ is equal to $f_{i+3}^* \psi_{i+3}$, where $\psi_{i+3}$ is understood as a $\psi$ class on $\Mbar_{0,i+3}$. Motivated by this fact we introduce the following notation.

\begin{defi}\label{def:omega}
We define the \defn{omega class} $\omega_i:=f_i^*(\psi_i)$ on $\Mbar_{0,n}$ to be the pullback of the corresponding $\psi$ class from $\Mbar_{0,i}$. 
\end{defi}

Applying this notation to the discussion following diagram \eqref{bigdiag} one obtains the following proposition.
\begin{prop}\label{prop:deg}
The degree of $\phi_n(\Mbar_{0,n})$ of index ${\bf k}=(k_1,\ldots,k_{n-3})$ is nonzero only if $\sum k_i =n-3$ and is equal to:
\begin{equation}
 \deg_\mathbf{k}(\phi_n(\Mbar_{0,n}))= \int_{\Mbar_{0,n} } \prod_{i=1}^{n-3} \omega_{i+3}^{k_{i}}.   
\end{equation}
\end{prop}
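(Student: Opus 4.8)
The plan is to derive the formula directly from the projection-formula description of multidegrees recorded in Section~\ref{subs:it}, combined with the identification of the pullbacks $\phi_n^\ast H_i$ already indicated in the discussion following diagram~\eqref{bigdiag}. Since $\phi_n$ is a closed embedding (Corollary~\ref{cor:embedding}) into $\bP^{\bf b}=\bP^1\times\cdots\times\bP^{n-3}$, the projection formula (\cite{fulton:it}, Proposition 2.5) gives
$$
\deg_{\bf k}(\phi_n(\Mbar_{0,n}))=\int_{\Mbar_{0,n}}\prod_{i=1}^{n-3}\bigl(\phi_n^\ast H_i\bigr)^{k_i}.
$$
Thus the entire content of the proposition reduces to computing the pullback classes $\phi_n^\ast H_i$ together with a dimension count.

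The central step is to show that $\phi_n^\ast H_i=\omega_{i+3}$ for each $i$, which I would establish by chasing the commutative diagram~\eqref{bigdiag}. The $i$-th factor $\bP^i$ of the target is introduced precisely by the Kapranov component of the embedding $\Phi_{i+3}=(\pi_{i+3},\psi_{i+3}):\Mbar_{0,i+3}\to\Mbar_{0,i+2}\times\bP^i$; since the Kapranov map is the morphism $|\psi_{i+3}|$, the hyperplane class on $\bP^i$ pulls back along this component to $\psi_{i+3}$ on $\Mbar_{0,i+3}$. Every horizontal map of~\eqref{bigdiag} lying above this row acts as the identity on the $\bP^i$ factor, while the vertical composite carrying $\Mbar_{0,n}$ down to $\Mbar_{0,i+3}$ is exactly the forgetful map $f_{i+3}=\pi_{i+4}\circ\cdots\circ\pi_n$. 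Functoriality of pullback then yields $\phi_n^\ast H_i=f_{i+3}^\ast\psi_{i+3}$, which is the class $\omega_{i+3}$ of Definition~\ref{def:omega}. Substituting into the displayed integral gives the asserted formula.

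Finally, the vanishing statement is a pure dimension count: each $\omega_{i+3}$ lies in $A^1(\Mbar_{0,n})$, so $\prod_i\omega_{i+3}^{k_i}\in A^{\sum k_i}(\Mbar_{0,n})$, and the integral (the degree of the $0$-dimensional part) can be nonzero only when $\sum k_i$ equals $\dim\Mbar_{0,n}=n-3$; this is the special case for $\Mbar_{0,n}$ of the general remark in Section~\ref{subs:it}. The only real obstacle is the bookkeeping in the diagram chase: one must track the index shift $i\mapsto i+3$ between the projective factors and the spaces $\Mbar_{0,m}$ from which they arise, and verify that each relevant horizontal arrow in~\eqref{bigdiag} genuinely restricts to the identity on the already-produced projective factors, so that no correction terms enter the pullback. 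Once this is checked, the proposition follows formally.
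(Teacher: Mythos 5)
Your proposal is correct and follows essentially the same route as the paper's proof: the definition of multidegree plus the projection formula reduce everything to the identification $\phi_n^\ast H_i=\omega_{i+3}$, which the paper also obtains from the commutativity of diagram~\eqref{bigdiag} together with Definition~\ref{def:omega} (your diagram chase just spells out in more detail what the paper records in the discussion preceding the proposition). The dimension-count justification for the vanishing when $\sum k_i\neq n-3$ likewise matches the paper's appeal to the general remark in Section~\ref{subs:it}.
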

\begin{proof}
We have the following string of equalities
$$  \deg_\mathbf{k}(\phi_{n}(\Mbar_{0,n}))=\int_{\bP^1\times \ldots \times\mathbb{P}^{n-3}}{\phi_{n}}_\ast([\Mbar_{0,n}]) \prod_{i=1}^{n-3} H_i^{k_i} = \int_{\Mbar_{0,n} } \prod_{i=1}^{n-3} \phi_n^\ast(H_i^{k_i}) =  \int_{\Mbar_{0,n} } \prod_{i=1}^{n-3} \omega_{i+3}^{k_{i}},  $$  
where the first equality is the definition of multidegree, the second is given by the projection formula (\cite{fulton:it}, Proposition 2.5), and the third follows from the commutativity of diagram \eqref{bigdiag} and Definition \ref{def:omega}.
\end{proof}

\section{Intersections of $\omega$ classes and asymmetric multinomials}\label{sec:om}

In this section we obtain some results on the intersection theory of omega classes; in particular we show that top intersection numbers satisfy a recursion that leads us to define a notion of asymmetric multinomial coefficients.

For a permutation $\sigma\in \Sym_n$, let  $P_\sigma: \Mbar_{0,n} \to \Mbar_{0,n}$ be an automorphism which changes the order of marked points on $\Mbar_{0,n}$, i.e.
$$
P_\sigma: (C, p_1, \ldots, p_n) \mapsto (C, p_{\sigma(1)}, \ldots, p_{\sigma(n)}).
$$

\begin{lemma}\label{lem:invar}
The class of a monomial $M=\omega_1^{k_{1}}\ldots\omega_{a-1}^{k_{a-1}}$ in $\omega$ classes of indices $<a$ is invariant under permutation of indices $\geq a$. More precisely, for any $M\in A^*(\Mbar_{0,n})$ as before and $\sigma \in \Sym([a,n])$,
$$
P_\sigma^{\ast}(M)=M.
$$
\end{lemma}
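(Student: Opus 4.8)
The plan is to reduce the statement to the invariance of a single omega class, using that pullback along an automorphism is a ring homomorphism, and then to establish that invariance through a commutativity of forgetful morphisms. First I would record that $P_\sigma^\ast : A^\ast(\Mbar_{0,n}) \to A^\ast(\Mbar_{0,n})$ respects products, so that
$$
P_\sigma^\ast(M) = \prod_{i=1}^{a-1}\bigl(P_\sigma^\ast \omega_i\bigr)^{k_i}.
$$
Hence it suffices to prove $P_\sigma^\ast \omega_i = \omega_i$ for every $i$ with $1 \le i \le a-1$ and every $\sigma \in \Sym([a,n])$.

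The central step is the claim that, for such $i$ and $\sigma$, the two morphisms $f_i \circ P_\sigma$ and $f_i$ from $\Mbar_{0,n}$ to $\Mbar_{0,i}$ coincide. To see this, observe that since $i < a$ the forgetful map $f_i = \pi_{i+1}\circ\cdots\circ\pi_n$ forgets every point whose label lies in $\{i+1,\ldots,n\}\supseteq\{a,\ldots,n\}$ and retains only the points labeled $1,\ldots,i$. On the other hand $\sigma$ permutes only the labels in $\{a,\ldots,n\}$ and fixes $1,\ldots,a-1$, hence fixes $1,\ldots,i$. Therefore applying $P_\sigma$ and then $f_i$ retains exactly the geometric points $p_1,\ldots,p_i$ with unchanged labels and forgets all others, the same outcome as applying $f_i$ directly. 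Since the stabilization performed by a forgetful map depends only on the retained marked points and not on the labels carried by the forgotten ones, the two compositions agree as morphisms.

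Granting this, the conclusion is immediate:
$$
P_\sigma^\ast \omega_i = P_\sigma^\ast f_i^\ast \psi_i = (f_i \circ P_\sigma)^\ast \psi_i = f_i^\ast \psi_i = \omega_i,
$$
where the outer equalities use Definition \ref{def:omega} and the middle one uses the commutativity $f_i \circ P_\sigma = f_i$.

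I expect the main obstacle to be the careful verification of this commutativity rather than any computation: the only substantive point is to confirm that the stabilization step following a sequence of forgetful maps is insensitive to the labels on the forgotten points, so that permuting those labels beforehand has no effect once they are forgotten. Once that observation is secured, the lemma follows formally from the homomorphism property of $P_\sigma^\ast$.
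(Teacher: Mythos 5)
Your proof is correct and rests on the same key fact as the paper's: since $\sigma$ only permutes labels that the forgetful morphism discards, precomposing with $P_\sigma$ does not change that morphism, so $M$ is pulled back unchanged. The only cosmetic difference is that you verify $f_i \circ P_\sigma = f_i$ class by class and invoke the ring homomorphism property of $P_\sigma^\ast$, whereas the paper factors the entire monomial $M$ through the single forgetful map $f_{a-1}$ and applies the same invariance once.
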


\begin{proof}
The Lemma follows from the fact that $M=\omega_1^{k_{1}}\ldots\omega_{a-1}^{k_{a-1}}$ is a pull-back of the same monomial from $\Mbar_{0,a-1}$ under the forgetful map $f_{a-1}$, which is invariant under the action of $P_\sigma^{\ast}$.
\end{proof}

\begin{lemma}\label{cor:symmetric}
  Let $M=\omega_1^{k_{1}}\ldots\omega_{a-1}^{k_{a-1}}$ and $\sigma \in \Sym([a,n])$ be as before. Then for any monomial in $\psi$ classes of the form $N=\psi_{a}^{k_a}\ldots\psi_n^{k_n}$, we have
  \begin{equation}\label{eq:lem3}
   \int_{\Mbar_{0,n}}M\cdot N = \int_{\Mbar_{0,n}} M\cdot P_\sigma^*(N) = \int_{\Mbar_{0,n}} M\cdot\psi_{\sigma^{-1}(a)}^{k_a}\ldots\psi_{\sigma^{-1}(n)}^{k_n}.    
  \end{equation}
\end{lemma}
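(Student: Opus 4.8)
The plan is to exploit the fact that $P_\sigma$ is an \emph{automorphism} of $\Mbar_{0,n}$, and that pullback along an isomorphism preserves the degree of a zero-cycle. Concretely, for any isomorphism $g\colon \Mbar_{0,n}\to \Mbar_{0,n}$ and any top-degree class $\alpha\in A^{n-3}(\Mbar_{0,n})$ one has $\int_{\Mbar_{0,n}} g^\ast\alpha = \int_{\Mbar_{0,n}}\alpha$: since $g$ is an isomorphism, $g_\ast g^\ast = \mathrm{id}$ on Chow groups, and the degree map to a point factors through $g_\ast$, so $\deg(g^\ast\alpha)=\deg(g_\ast g^\ast\alpha)=\deg(\alpha)$. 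I would record this as the one external input and then apply it with $g=P_\sigma$ and $\alpha = M\cdot N$.

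The first equality in \eqref{eq:lem3} then follows formally. Because $P_\sigma^\ast$ is a ring homomorphism on $A^\ast(\Mbar_{0,n})$, we have $P_\sigma^\ast(M\cdot N)=P_\sigma^\ast(M)\cdot P_\sigma^\ast(N)$. Since $\sigma\in\Sym([a,n])$ and $M=\omega_1^{k_1}\cdots\omega_{a-1}^{k_{a-1}}$ involves only $\omega$ classes of indices $<a$, Lemma~\ref{lem:invar} gives $P_\sigma^\ast(M)=M$. Combining these with the degree-invariance above yields
\[
\int_{\Mbar_{0,n}} M\cdot N=\int_{\Mbar_{0,n}} P_\sigma^\ast(M\cdot N)=\int_{\Mbar_{0,n}} M\cdot P_\sigma^\ast(N),
\]
which is exactly the first claimed equality.

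For the second equality I would compute $P_\sigma^\ast(N)$ explicitly. Since $P_\sigma$ relabels the marked points according to $\sigma$, and $\psi_i=c_1(\mathbb{L}_i)$ is the Chern class of the cotangent line at the $i$-th marked point, a direct fiber computation $(\mathbb{L}_i)_{P_\sigma(x)}=(\mathbb{L}_{\sigma^{-1}(i)})_x$ gives $P_\sigma^\ast\psi_i=\psi_{\sigma^{-1}(i)}$, following the labeling convention of the definition of $P_\sigma$. Applying this to each factor of $N=\psi_a^{k_a}\cdots\psi_n^{k_n}$ produces $P_\sigma^\ast(N)=\psi_{\sigma^{-1}(a)}^{k_a}\cdots\psi_{\sigma^{-1}(n)}^{k_n}$, which is the third expression.

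I expect the only real obstacle to be bookkeeping rather than substance: one must state cleanly why the automorphism preserves the integral (the degree-invariance step above), and one must be careful with the $\sigma$ versus $\sigma^{-1}$ convention when tracking how $P_\sigma$ acts on the $\psi$ classes. Neither is deep, but both deserve an explicit sentence so that the permutation of indices in the final expression is unambiguous. The conceptual content is simply that $M$ is invariant under the $\Sym([a,n])$-action while the integral is invariant under the corresponding automorphisms, so the value depends only on the multiset of exponents $\{k_a,\ldots,k_n\}$ and not on which labels $\geq a$ carry them.
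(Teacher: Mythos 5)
Your proposal is correct and takes essentially the same approach as the paper: both reduce to the top-degree case, use invariance of the top intersection under the automorphism $P_\sigma$ (the paper states this as the class identity $M\cdot N = P_\sigma^*(M\cdot N)$, you as the degree identity $\int_{\Mbar_{0,n}} g^*\alpha = \int_{\Mbar_{0,n}}\alpha$), and conclude via multiplicativity of $P_\sigma^*$ together with Lemma~\ref{lem:invar}. Your explicit identification of $P_\sigma^*(N)$ is left implicit in the paper; just note that with the paper's literal convention $P_\sigma(x)=(C,p_{\sigma(1)},\ldots,p_{\sigma(n)})$ the fiber computation gives $P_\sigma^*\psi_i=\psi_{\sigma(i)}$ rather than $\psi_{\sigma^{-1}(i)}$, a discrepancy that is immaterial here since $\sigma$ ranges over all of $\Sym([a,n])$.
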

\begin{proof}
The above expression is equal to $0$ unless $\deg(M)+\deg(N)=n-3$. If the product $M\cdot N$ is of top degree, then $M\cdot N= P^*_\sigma(M\cdot N)=P^*_\sigma(M)\cdot P^*_\sigma(N)$. But by Lemma~\ref{lem:invar}, $P^*_\sigma(M)=M$, so \eqref{eq:lem3} is proved.
\end{proof}

\begin{lemma}\label{lem:otop}
   Let  $ \prod_{j=a+1}^{n} \omega_j^{k_{j}}$ be a monomial in omega classes such that $k_j >0$ for all $a< j\leq n$. Then
   \begin{equation}
 \prod_{j=a+1}^{n} \omega_j^{k_{j}}
   =
  \prod_{j=a+1}^{n} \psi_j^{k_{j}}. 
   \end{equation}
\end{lemma}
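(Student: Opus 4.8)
The plan is to rewrite each omega class as its psi class plus boundary corrections, and then exploit the hypothesis $k_j>0$ to kill every correction term in the top-degree product. First I would establish the comparison formula
\begin{equation}\label{eq:compare}
\omega_m = \psi_m - \sum_{\emptyset\neq S\subseteq\{m+1,\ldots,n\}}\delta_{\{m\}\cup S}\qquad\text{on }\Mbar_{0,n}.
\end{equation}
Since $\omega_m=f_m^*\psi_m$ with $f_m=\pi_{m+1}\circ\cdots\circ\pi_n$, I would prove \eqref{eq:compare} by induction on the number of forgetful pullbacks applied to $\psi_m\in A^*(\Mbar_{0,m})$, the inductive step combining \eqref{pbrel} in the form $\pi_l^*\psi_m=\psi_m-\delta_{\{m,l\}}$ with the standard boundary pullback formula $\pi_l^*\delta_I=\delta_I+\delta_{I\cup\{l\}}$; tracking which subsets $S$ acquire the new index $l$ shows that each divisor $\delta_{\{m\}\cup S}$ occurs exactly once. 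The salient feature of \eqref{eq:compare} is that every correction divisor carries $m$ together with a \emph{nonempty} set $S$ of strictly larger indices on one component, and nothing smaller than $m$.

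Next I would convert the $\omega$'s to $\psi$'s one index at a time, from the top down. Setting $R_m:=\bigl(\prod_{j=a+1}^{m}\omega_j^{k_j}\bigr)\bigl(\prod_{j=m+1}^{n}\psi_j^{k_j}\bigr)$, whose extreme values $R_n$ and $R_a$ are the two sides of the lemma, I want the telescoping chain $R_n=R_{n-1}=\cdots=R_a$. Each equality $R_m=R_{m-1}$ follows once we know
\begin{equation}\label{eq:step}
\bigl(\omega_m^{k_m}-\psi_m^{k_m}\bigr)\cdot\prod_{j=m+1}^{n}\psi_j^{k_j}=0,
\end{equation}
because $R_m-R_{m-1}$ equals $\prod_{j=a+1}^{m-1}\omega_j^{k_j}$ times the left-hand side of \eqref{eq:step} (the case $m=n$ is trivial, as $\omega_n=\psi_n$). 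Expanding $\omega_m^{k_m}$ via \eqref{eq:compare}, every monomial other than $\psi_m^{k_m}$ carries a factor $\delta_{\{m\}\cup S}$ for some nonempty $S\subseteq\{m+1,\ldots,n\}$, so \eqref{eq:step} reduces to the vanishing
\begin{equation}\label{eq:vanish}
\delta_{\{m\}\cup S}\cdot\prod_{j=m+1}^{n}\psi_j^{k_j}=0\qquad\text{for every nonempty }S\subseteq\{m+1,\ldots,n\}.
\end{equation}

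The heart of the proof, and the step I expect to require the most care, is \eqref{eq:vanish}; this is the only place the hypothesis $k_j>0$ enters. Writing $\iota$ for the inclusion of $\delta_{\{m\}\cup S}$ and using $\delta\cdot\alpha=\iota_*\iota^*\alpha$, it suffices to show that $\iota^*\prod_{j=m+1}^{n}\psi_j^{k_j}=0$. Under the gluing identification $\delta_{\{m\}\cup S}\cong\Mbar_{0,|S|+2}\times\Mbar_{0,n-|S|}$, each $\psi_s$ with $s\in S$ restricts to the pullback of $\psi_s$ from the first factor, so the restriction contains, pulled back from $\Mbar_{0,|S|+2}$, the class $\prod_{s\in S}\psi_s^{k_s}$ of degree $\sum_{s\in S}k_s\ge|S|>|S|-1=\dim\Mbar_{0,|S|+2}$; since this degree exceeds the dimension, the class is zero, and hence so is the entire restriction. (For $|S|=1$ this recovers exactly $\psi_s\cdot\delta_{\{m,s\}}=0$, relation \eqref{van}.) The points deserving the most attention are the standard but essential facts that $\psi$ classes restrict to $\psi$ classes on the component bearing the marking, via the gluing description of $\delta_I$, and the verification that the divisors appearing in \eqref{eq:compare} are honest (non-degenerate) boundary divisors, which holds once all indices involved are at least $4$, as is the case in every application.
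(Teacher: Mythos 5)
Your proof is correct, but it takes a genuinely different route from the paper's. The paper proves the lemma by descending induction on $a$: it writes $\omega_{a+1}=f_{a+2}^*\pi_{a+2}^*(\psi_{a+1})$, expands one forgetful pullback at a time via \eqref{pbrel}, and kills each single boundary correction $\delta_{\{a+1,j\}}$ immediately using \eqref{van}, because that correction appears inside $f_j^*(\cdots)$ next to $\psi_j^{k_j}$ on the intermediate space $\Mbar_{0,j}$. This bookkeeping trick means the paper never needs to pull back boundary divisors at all; its only inputs are \eqref{pbrel} and \eqref{van}. You instead derive the closed-form comparison $\omega_m=\psi_m-\sum_{\emptyset\neq S\subseteq\{m+1,\ldots,n\}}\delta_{\{m\}\cup S}$ on $\Mbar_{0,n}$ itself, which requires the additional (standard, but not quoted in the paper) fact $\pi_l^*\delta_I=\delta_I+\delta_{I\cup\{l\}}$, and then you dispose of all corrections at once with the vanishing $\delta_{\{m\}\cup S}\cdot\prod_{j>m}\psi_j^{k_j}=0$, proved by restricting to the boundary divisor and counting dimensions on the factor $\Mbar_{0,|S|+2}$ --- again a standard input (restriction of $\psi$ classes to boundary strata) that the paper avoids. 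Both arguments are sound; the paper's is more economical in prerequisites and stays entirely within its two stated relations, while yours yields two reusable byproducts --- an explicit boundary expansion of the $\omega$ classes and a strictly stronger vanishing statement generalizing \eqref{van} --- at the cost of invoking more of the standard intersection-theoretic toolkit for $\Mbar_{0,n}$. One small point to make fully rigorous in your write-up: the "strong induction / tracking which subsets acquire the new index" step for the comparison formula, and the non-degeneracy check $|(\{m\}\cup S)^c|\ge 2$ (which holds whenever $m\ge 3$, hence in all applications), should be spelled out, but neither presents any difficulty.
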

\begin{proof}
    We prove the statement by descending induction on $a$. The base case $a=n-1$ is true  since $\omega_n = \psi_n$ by definition. Assume by induction that $ \prod_{j=a+2}^{n} \omega_j^{k_{j}}
   =
  \prod_{j=a+2}^{n} \psi_j^{k_{j}}$ whenever all exponents are positive, and consider a monomial 
    $
    \omega_{a+1}^{k_{a+1}}\prod_{j=a+2}^n\psi_j^{k_j},
    $ with $k_{a+1}>0$.
   Using equation \eqref{pbrel} to pull-back $\omega_{a+1}^{k_{a+1}}$ via $\pi_{a+2}$ and  using that $f_{a+1} = \pi_{a+2}\circ f_{a+2}$, one has
    \begin{align*}
      \omega_{a+1}^{k_{a+1}}\prod_{j=a+2}^n\omega_j^{k_j}&= f_{a+2}^\ast(\pi_{a+2}^\ast(\psi_{a+1}))^{k_{a+1}}\prod_{j=a+2}^n\omega_j^{k_j}
      \\
      &=
     f_{a+2}^\ast\left(\left(\psi_{a+1} - \delta_{\{a+1,a+2\}}\right)^{k_{a+1}}\psi_{a+2}^{k_{a+2}}\right) \prod_{j=a+3}^n\omega_j^{k_j}.
    \end{align*}
By \eqref{van}, $\delta_{\{a+1,a+2\}}\psi_{a+2} = 0$ and since $k_{a+2} >0$,
\begin{equation}
    \omega_{a+1}^{k_{a+1}}\prod_{j=a+2}^n\omega_j^{k_j}=
    f_{a+2}^\ast(\psi_{a+1})^{k_{a+1}} \prod_{j=a+2}^n\omega_j^{k_j}.
    \end{equation}
    We may repeat the same argument for the forgetful morphisms $\pi_{a+3}, \ldots, \pi_n$, and at each step $j = a+3, \ldots, n$ the boundary corrections $\delta_{\{a+1, j\}}$ are annihilated by the class $\psi_j^{k_j}$; in the end one  obtains 
    \begin{equation}
    \omega_{a+1}^{k_{a+1}}\prod_{j=a+2}^n\omega_j^{k_j}=
   \psi_{a+1}^{k_{a+1}} \prod_{j=a+2}^n\omega_j^{k_j} = \psi_{a+1}^{k_{a+1}} \prod_{j=a+2}^n\psi_j^{k_j},
    \end{equation}
 where the last equality is obtained by applying the inductive hypothesis.

\end{proof}

\begin{lemma}\label{lem:omega-classes}
Let  $ \prod_{i=1}^{a-1} \omega_i^{k_{i}} \prod_{j=a+1}^{n} \omega_j^{k_{j}}$ be a monomial in omega classes such that $k_j >0$ for all $a< j\leq n$. Then the following relation holds:
$$
(\pi_{a})_*\left( \prod_{i=1}^{a-1} \omega_i^{k_{i}} \prod_{j=a+1}^{n} \omega_j^{k_{j}}\right)=  \prod_{i=1}^{a-1} \omega_i^{k_{i}} \cdot \St \left(\prod_{j=a+1}^{n} \psi_j^{k_{j}}\right).
$$
\end{lemma}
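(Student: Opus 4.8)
The plan is to assemble the result from three facts already established in the excerpt: Lemma~\ref{lem:otop}, the projection formula, and the string equation~\eqref{string}. The guiding idea is that the low-index factor $\prod_{i<a}\omega_i^{k_i}$ is constant along the fibers of $\pi_a$ and can be pulled out of the pushforward, after which the high-index factor becomes an honest $\psi$-monomial whose pushforward is exactly a string derivative.

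First I would verify that for each $i<a$ the class $\omega_i$ on $\Mbar_{0,n}$ is a $\pi_a$-pullback. By Definition~\ref{def:omega}, $\omega_i=f_i^*\psi_i$, where $f_i$ forgets all marks with label exceeding $i$. Since $i<a\le n$, the label $a$ is among those forgotten, so $f_i$ factors as $f_i=f_i'\circ\pi_a$ with $f_i'\colon\Mbar_{0,n-1}\to\Mbar_{0,i}$ the analogous forgetful map on the target of $\pi_a$. Consequently $\prod_{i<a}\omega_i^{k_i}=\pi_a^*\big(\prod_{i<a}\omega_i^{k_i}\big)$, where the right-hand monomial is now understood as a class on $\Mbar_{0,n-1}$.

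Next I would convert the high-index factor and push forward. Because $k_j>0$ for every $a<j\le n$ by hypothesis, Lemma~\ref{lem:otop} gives the identity $\prod_{j>a}\omega_j^{k_j}=\prod_{j>a}\psi_j^{k_j}$ of classes on $\Mbar_{0,n}$, so this substitution may be made inside the pushforward. Applying the projection formula to the $\pi_a$-pullback factor from the previous step yields
\[
(\pi_a)_*\Big(\prod_{i<a}\omega_i^{k_i}\prod_{j>a}\omega_j^{k_j}\Big)=\prod_{i<a}\omega_i^{k_i}\cdot(\pi_a)_*\Big(\prod_{j>a}\psi_j^{k_j}\Big).
\]
The remaining $\psi$-monomial has exponent $0$ on $\psi_a$, so the string equation~\eqref{string} identifies $(\pi_a)_*\big(\prod_{j>a}\psi_j^{k_j}\big)$ with $\St\big(\prod_{j>a}\psi_j^{k_j}\big)$, which is precisely the claimed right-hand side.

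The only step needing genuine care is the functorial factorization $f_i=f_i'\circ\pi_a$ for $i<a$, since it is what licenses treating $\prod_{i<a}\omega_i^{k_i}$ as a $\pi_a$-pullback and hence applying the projection formula; the rest is a direct appeal to Lemma~\ref{lem:otop} and to the string recursion. The positivity hypothesis $k_j>0$ enters exactly once, to justify the $\omega$-to-$\psi$ replacement via Lemma~\ref{lem:otop}.
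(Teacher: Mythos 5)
Your proof is correct and follows essentially the same route as the paper's: replace the high-index $\omega$ classes by $\psi$ classes via Lemma~\ref{lem:otop}, recognize $\prod_{i<a}\omega_i^{k_i}$ as a $\pi_a$-pullback (the factorization $f_i=f_i'\circ\pi_a$ is exactly the paper's observation that $\omega_i=\pi_a^*\omega_i$ for $i<a$), and then apply the projection formula together with the string equation~\eqref{string}. Your write-up is in fact slightly more explicit than the paper's about why the pullback identification holds.
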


\begin{proof}
From Lemma \ref{lem:otop} we may replace the $\omega$ classes with indices greater than $a$ by $\psi$ classes.

Next we notice that $\omega_i = \pi_a^* \omega_i$ for any $i< a$, where the $\omega_i$ on the right hand side still refers to the class obtained by pulling back $\psi_i$ via the forgetful morphism that forgets all marks greater than $i$ except for the mark $a$.
We therefore have $ \prod_{i=1}^{a-1} \omega_i^{k_{i}} = \pi_a^* \left(\prod_{i=1}^{a-1} \omega_i^{k_{i}}\right)$.
Applying the projection formula and the string equation~(\ref{string}), we obtain:

\begin{equation}\label{eq:sfo}
(\pi_{a})_*\left(\pi_a^* \left(\prod_{i=1}^{a-1} \omega_i^{k_{i}}\right)\cdot \prod_{j=a+1}^{n} \psi_j^{k_{j}} \right)=
 \prod_{i=1}^{a-1} \omega_i^{k_{i}}\cdot
(\pi_{a})_*\left(\prod_{j=a+1}^{n} \psi_j^{k_{j}}\right)  = \prod_{i=1}^{a-1} \omega_i^{k_{i}}\cdot
\St \left(\prod_{j=a+1}^{n} \psi_j^{k_{j}}\right) .    
\end{equation}
\end{proof}
At this point it is very tempting to claim a recursion among intersection numbers of $\omega$ classes  by applying Lemma \ref{lem:otop} in reverse and replacing $\psi$ classes back with $\omega$ classes in the last term of \eqref{eq:sfo}. However, a bit of care is needed, as not all terms of the expression $\St \left(\prod_{j=a+1}^{n} \psi_j^{k_{j}}\right)$ are guaranteed to have positive exponents for all $\psi_j$ with $j>a$, in particular if some $k_j=1$. Example \ref{ex:rec} gives a concrete illustration of how to address this subtlety and might be useful to refer to in order to navigate the proof of Lemma \ref{lem:sttil}.

\begin{lemma}\label{lem:sttil}
With notation as in Lemma \ref{lem:omega-classes}, we have
\begin{equation}\label{eq:ptoo}
 \St \left(\prod_{j=a+1}^{n} \psi_j^{k_{j}}\right) = \sum_{\{j\geq a+1| k_j \not=1\}} \omega_{j-1}^{k_j-1}\prod_{l\not=j} \omega_{l-1}^{k_{l}}
 +   
 \sum_{\{j\geq a+1| k_j =1\}}\prod_{l<j} \omega_{l}^{k_{l}}\prod_{l> j} \omega_{l-1}^{k_{l}}.
\end{equation}
\end{lemma}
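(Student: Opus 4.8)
The plan is to apply the definition of $\St$ to $\prod_{j=a+1}^n\psi_j^{k_j}$ and then convert each resulting term back into $\omega$ classes with Lemma \ref{lem:otop}, taking care of the terms in which an exponent drops to zero. Since $k_j>0$ for every $a<j\le n$, the operator $\St$ produces exactly one term for each index $j$ in this range, namely $\psi_j^{k_j-1}\prod_{l\ne j}\psi_l^{k_l}$; after forgetting the $a$-th point (as in Lemma \ref{lem:omega-classes}) these classes live on $\Mbar_{0,n-1}$, where the index $l$ is relabeled to $l-1$. So the entire content of the lemma is to recognize each of these $n-a$ terms on $\Mbar_{0,n-1}$ as a monomial in $\omega$ classes, and the dichotomy in the statement is precisely the dichotomy between $k_j\ge 2$ and $k_j=1$.

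For a term with $k_j\ge 2$ the decremented exponent $k_j-1$ is still positive, so the relabeled monomial $\psi_{j-1}^{k_j-1}\prod_{l\ne j}\psi_{l-1}^{k_l}$ is supported on the full contiguous block of indices $\{a,a+1,\ldots,n-1\}$, all with positive exponents, which reaches the top index $n-1$ of $\Mbar_{0,n-1}$. Lemma \ref{lem:otop} then applies verbatim and lets me replace every $\psi$ by the corresponding $\omega$, producing $\omega_{j-1}^{k_j-1}\prod_{l\ne j}\omega_{l-1}^{k_l}$. Summing over all $j$ with $k_j\ne 1$ gives the first sum on the right-hand side; this is the routine half.

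The terms with $k_j=1$ are the crux. Here the factor $\psi_{j-1}^{0}$ disappears, leaving a monomial supported on $\{a,\ldots,n-1\}\setminus\{m\}$ with a gap at the relabeled index $m:=j-1$, so Lemma \ref{lem:otop} no longer applies to the whole monomial. My plan is to split the monomial at the gap. The factors with index $>m$ still form a contiguous block reaching the top, so Lemma \ref{lem:otop} converts them to $\prod_{l>j}\omega_{l-1}^{k_l}$ exactly as before — the boundary corrections $\delta_{\{p,q\}}$ produced involve only indices $>m$ and are killed by the $\psi_q$ already present, so the low factors do not interfere. For the factors with index $<m$, I would descend to the smaller moduli space: writing $F=f_m\colon\Mbar_{0,n-1}\to\Mbar_{0,m}$, each $\omega_l$ with $l\le m$ is $F^\ast$ of the corresponding omega class on $\Mbar_{0,m}$, and on $\Mbar_{0,m}$ the block $\{a+1,\ldots,m\}$ is contiguous and reaches the top, so Lemma \ref{lem:otop} there identifies $\prod_{l<j}\omega_l^{k_l}$ with the matching $\psi$-monomial, whose $F$-pullback agrees with the low part of our term once the pullback corrections (again killed by the surviving high classes) are discarded. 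The upshot is that the block below the gap slides up by one to abut the gap, which is exactly why the indices $l<j$ appear unshifted as $\omega_l$ rather than $\omega_{l-1}$.

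The delicate step is precisely this identification below the gap: unlike the $k_j\ge 2$ case there is no uniform replacement of $\psi$ by $\omega$, and one must verify that the boundary corrections introduced by the forgetful pullbacks $F^\ast$ are annihilated by the high classes, and that the net relabeling is the slide $\psi_{l-1}\mapsto\omega_l$ and not $\psi_{l-1}\mapsto\omega_{l-1}$. Tracking these corrections honestly — equivalently, exploiting the invariance of the high $\omega$ classes under reordering the low marks (Lemma \ref{lem:invar}, in the spirit of Lemma \ref{cor:symmetric}) to move the gap to the bottom of the low block — is the heart of the argument, and I expect Example \ref{ex:rec} to be the right place to calibrate the bookkeeping before committing to the general case.
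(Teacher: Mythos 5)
Your skeleton agrees with the paper's: expand $\St$, split the terms according to $k_j\neq 1$ versus $k_j=1$, convert the $k_j\neq 1$ terms with Lemma \ref{lem:otop} after reindexing, and isolate the $k_j=1$ terms as the crux. Your treatment of the first sum and of the block of indices above the gap is correct. But the proof is not complete: you explicitly defer what you call ``the heart of the argument'' (the block below the gap), and the mechanism you do describe for it would fail. Descending via $F=f_m$ and applying Lemma \ref{lem:otop} on $\Mbar_{0,m}$ identifies $\prod_{l<j}\omega_l^{k_l}$ with the $\psi$-monomial \emph{in the same indices} $a+1,\ldots,m$, whereas the term produced by the string equation carries $\psi$-indices $a,\ldots,m-1$ after the mark $a$ is forgotten and the labels shifted down. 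The discrepancy is an index shift, and the boundary corrections coming from \eqref{pbrel} are classes supported on boundary divisors: annihilating them can never turn $\psi_{l-1}$ into $\psi_{l}$. So no bookkeeping of pullback corrections, however honest, produces the slide; it can only come from relabelling marks via the symmetric group action, which your proposal names but never executes.

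That relabelling is exactly the paper's one-step solution, and it is applied in the opposite order from your plan: \emph{before} converting anything to omega classes, apply Lemma \ref{cor:symmetric} to the whole $\psi$-monomial using the cyclic permutation $(a+1 \ a+2 \ \ldots \ j-1 \ j)$, which moves the gap to the bottom and leaves strictly positive exponents on a contiguous block reaching the top index; a single application of Lemma \ref{lem:otop}, followed by reindexing, then yields the second sum of \eqref{eq:ptoo}. Your order of operations (convert the high block to omegas first, then permute) would instead require invariance of the \emph{high} omega classes under permutations of the \emph{low} marks; you cite Lemma \ref{lem:invar} for this, but that lemma states the reverse (low omega monomials are invariant under permutations of high marks), which is precisely what legitimizes Lemma \ref{cor:symmetric}. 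The invariance you need is true but is not proved in the paper, and is avoidable by permuting first. Finally, any completed version of your argument must carry the caveat the paper states: the permutation changes the intersection cycle, so \eqref{eq:ptoo} holds as an equality of degrees against monomials in omega classes of index less than $a$ --- which is all Proposition \ref{prop:rec} needs --- not as an identity of classes.
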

\begin{proof}
We write the string recursion as:
\begin{equation}\label{eq:stst}
 \St \left(\prod_{j=a+1}^{n} \psi_j^{k_{j}}\right) = \sum_{\{j\geq a+1| k_j \not=1\}} \psi_{j}^{k_j-1}\prod_{l\not=j} \psi_{l}^{k_{l}}
 +   
 \sum_{\{j\geq a+1| k_j =1\}}\prod_{l<j} \psi_{l}^{k_{l}}\prod_{l> j} \psi_{l}^{k_{l}}.
\end{equation}
 For the first summand of \eqref{eq:stst},  we apply Lemma \ref{lem:otop}, taking care to reindex the $\omega$ classes since we have forgotten the mark labeled $a$; this yields the first summand of \eqref{eq:ptoo}. For any term of the second summand in \eqref{eq:stst}, we first apply Lemma \ref{cor:symmetric}: a cyclic permutation of the indices $a+1, \ldots, j$ will give in general a different intersection cycle, but it
 does not change the degree of any  intersection cycle obtained by multiplying by a monomial in omega classes with indices strictly less than $a$; we choose the permutation $(a+1 \ a+2 \ \ldots \ j-1 \ j )$ so that all exponents of $\psi_j$'s, with $j>a+1$, are strictly positive. Now Lemma \ref{lem:otop} may be applied; reindexing the omega classes because the mark labeled $a$ is no longer present, one obtains the second summand of \eqref{eq:ptoo}. 
\end{proof}
Combining the results of Lemmas \ref{lem:omega-classes} and \ref{lem:sttil} one obtains a recursion among intersection numbers of $\omega$ classes.
\begin{example}\label{ex:rec}
Consider 
\begin{equation}\label{eq:exint}
\int_{\Mbar_{0,10}} \omega_4\omega_8^2\omega_9\omega_{10}^3.
\end{equation}
We start by using Lemma \ref{lem:omega-classes}: since $7$ is the largest index $i$ of an $\omega_i$ that does not occur in the product, we push forward the integrand via $\pi_7$ and obtain
\begin{equation}\label{eq:appst}
(\pi_7)_\ast(\omega_4\omega_8^2\omega_9\omega_{10}^3) = \omega_4\St(\psi_8^2\psi_9\psi_{10}^3) = \omega_4(\psi_8\psi_9\psi_{10}^3+\psi_8^2\psi_{10}^3+\psi_8^2\psi_9\psi_{10}^2).
\end{equation}
The rightmost term in the string of equalities \eqref{eq:appst} is a zero-dimensional intersection cycle on a moduli space of nine-pointed rational curves, where the points are labeled $1,\ldots, \hat7, \ldots, 10$. Such a moduli space may be identified with $\Mbar_{0,9}$ simply by shifting down by one the indices of the last three marked points, to obtain that the intersection number in \eqref{eq:exint} is equal to
\begin{equation}\label{eq:almost}
 \int_{\Mbar_{0,9}}\omega_4(\psi_7\psi_8\psi_{9}^3+\psi_7^2\psi_{9}^3+\psi_7^2\psi_8\psi_{9}^2).   
\end{equation}
We now apply Lemma \ref{cor:symmetric} only to the second summand in \eqref{eq:almost}, and permute the indices $7$ and $8$, to obtain:
\begin{equation}\label{eq:almostthere}
 \int_{\Mbar_{0,9}}\omega_4(\psi_7\psi_8\psi_{9}^3+\psi_8^2\psi_{9}^3+\psi_7^2\psi_8\psi_{9}^2).   
\end{equation}
Finally we are in a position to apply Lemma \ref{lem:otop} and express $\eqref{eq:exint}$ in terms of a sum of intersection cycles of omega classes:
\begin{equation}\label{there}
 \int_{\Mbar_{0,10}} \omega_4\omega_8^2\omega_9\omega_{10}^3   = \int_{\Mbar_{0,9}}\omega_4(\omega_7\omega_8\omega_{9}^3+\omega_8^2\omega_{9}^3+\omega_7^2\omega_8\omega_{9}^2).
\end{equation}
We have expressed  the intersection cycle of omega classes \eqref{eq:exint}  on a space of ten-pointed curves as a sum of intersection cycles of omega classes on a space of nine-pointed curves. In order to combinatorialize this recursion, we look at the vector of exponents of the $\omega$  classes; we start at $n = 4$ since we know that $\omega_1 = \omega_2=\omega_3 = 0$. The vector of exponents for \eqref{eq:exint} is equal to $(1,0,0,0,2,1,3)$; the vectors of exponents for the three terms of the recursive expression in \eqref{there} are $(1,0,0,1,1,3), (1,0,0,0,2,3), (1,0,0,2,1,2)$.
\end{example}

\begin{remark}
 In computing the multidegrees of $\phi$, we only need to consider products of the $n-3$ omega classes $\omega_4,\ldots,\omega_n$.  In particular, for the multidegree $\deg_{(1,1,\ldots,1)}(\phi_n(\Mbar_{0,n}))$, we compute $\int_{\Mbar_{0,n}}\omega_4\omega_5\cdots\omega_n.$  There is therefore always a largest $\omega$ index that does not occur in the product.  In this case, forgetting the $3$rd marked point and applying Lemmas \ref{lem:omega-classes} and \ref{lem:sttil} gives us $$\int_{\Mbar_{0,n}}\omega_4\omega_5\cdots\omega_n=(n-3)\int_{\Mbar_{0,n-3}}\omega_4\cdots \omega_{n-1}=\cdots =(n-3)!.$$ 
\end{remark}

We proceed to give a general description of the recursion illustrated in Example \ref{ex:rec} with a language that makes contact with the combinatorial structure of parking functions. Unfortunately, that entails having to reverse the order of the vector of exponents for the monomial in $\omega$-classes.

\begin{defi}
	For a weak composition $\mathbf{k}\in \Comp(n,n)$, let $k_i$ be the leftmost $0$ in $k$, and let $j<i$ be a positive integer (where we set $i=n+1$ if there are no zeroes in $\mathbf{k}$).
	Then define $\widetilde{\mathbf{k}}_j$ to be the composition in $\Comp(n-1,n-1)$ formed by decreasing $k_j$ by $1$ and then removing the leftmost $0$ (which is either in position $j$ or $i$) from the resulting tuple. 
\end{defi}

For example, if $\mathbf{k}=(3,1,2,0,0,1,0)$, then $\widetilde{\mathbf{k}}_1=(2,1,2,0,1,0)$, $\widetilde{\mathbf{k}}_2=(3,2,0,0,1,0)$, and $\widetilde{\mathbf{k}}_3=(3,1,1,0,1,0)$.  Since $i=4$ in this example, $\widetilde{\mathbf{k}}_4$ is not defined.

\begin{defi}
  We  define the \defn{reverse} of a composition ${\bf k} = (k_1,\ldots,k_n)$, denoted $\rev(\bf k)$, to be the composition $$\rev({\bf k})=(k_n,k_{n-1},\ldots,k_1).$$ 
\end{defi}
 With these definitions in place, we can efficiently  describe the recursion on intersection numbers of $\omega$ classes. 

\begin{prop}\label{prop:rec}
For a weak composition $\mathbf{k}\in Comp(n-3,n-3)$, let $i_{\mathbf{k}}$ denote the index of the leftmost $0$. Denote by $\omega^{\rev(\mathbf{k})}$ the class $\omega_4^{k_{n-3}}\cdot \ldots \cdot\omega_n^{k_{1}}$.
Then:
\begin{equation}\label{eq:rrev}
 \int_{\Mbar_{0,n}} \omega^{\rev(\bf{k})}= \sum_{j=1}^{i_{\rev(\mathbf{k})}-1}\int_{\Mbar_{0,n-1}} \omega^{\rev(\widetilde{\mathbf{k}}_j)}.    
\end{equation} 
\end{prop}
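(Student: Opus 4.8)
The plan is to recognize \eqref{eq:rrev} as the combinatorial transcription of the geometric pushforward identity assembled from Lemmas \ref{lem:omega-classes} and \ref{lem:sttil}, with the reversal $\rev$ absorbing the discrepancy between the indexing of the composition $\mathbf{k}$ and the indexing of the $\omega$-monomial.

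First I would set up the translation. By definition $\omega^{\rev(\mathbf{k})}=\omega_4^{k_{n-3}}\cdots\omega_n^{k_1}$, so the exponent of $\omega_m$ is $k_{n+1-m}$ and the exponent vector of the monomial is $\rev(\mathbf{k})$. Let $i_\mathbf{k}$ be the position of the leftmost zero of $\mathbf{k}$; under the reversal this is the rightmost zero of the exponent vector, so the class $\omega_a$ with $a=n+1-i_\mathbf{k}$ is the largest-index $\omega$ absent from the monomial, while $\omega_{a+1},\ldots,\omega_n$ are all present. This is exactly the hypothesis of Lemma \ref{lem:omega-classes}. Since $\pi_a\colon\Mbar_{0,n}\to\Mbar_{0,n-1}$ is proper and proper pushforward preserves the degree of a zero-dimensional cycle, I would write
\[
\int_{\Mbar_{0,n}}\omega^{\rev(\mathbf{k})}=\int_{\Mbar_{0,n-1}}(\pi_a)_*\,\omega^{\rev(\mathbf{k})}=\int_{\Mbar_{0,n-1}}\prod_{m<a}\omega_m^{k_{n+1-m}}\cdot\St\Bigl(\prod_{r>a}\psi_r^{k_{n+1-r}}\Bigr),
\]
the last equality being Lemma \ref{lem:omega-classes}.

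Next I would expand the string term by Lemma \ref{lem:sttil}, which rewrites the $\St$ of the $\psi$-monomial as a sum of $\omega$-monomials on $\Mbar_{0,n-1}$ with one summand for each present index $r\in\{a+1,\ldots,n\}$; there are $n-a=i_\mathbf{k}-1$ of them. The heart of the argument is then a termwise dictionary: the summand in which the exponent of $\psi_r$ is lowered, followed by forgetting the absent mark $a$, must match $\omega^{\rev(\widetilde{\mathbf{k}}_j)}$ for $j=n+1-r$, where $j$ runs over $1,\ldots,i_\mathbf{k}-1$ (exactly the range on which $\widetilde{\mathbf{k}}_j$ is defined). Concretely, lowering the exponent of $\psi_r$ corresponds to decreasing $k_j$ by one, and forgetting the mark $a$ corresponds to deleting a single zero entry from $\mathbf{k}$; I would confirm by a direct comparison of exponent vectors that $\rev$ intertwines the shift $l\mapsto l-1$ of $\omega$-indices above $a$ with the deletion of the relevant zero of $\mathbf{k}$.

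I expect the main obstacle to be the two-case split already isolated in Lemma \ref{lem:sttil}. When $k_j\neq1$ the lowered class $\psi_r^{k_{n+1-r}-1}$ remains present, forgetting $a$ deletes the old leftmost zero (position $i_\mathbf{k}$), and this reproduces the first summand of \eqref{eq:ptoo}; this matches the branch of the definition of $\widetilde{\mathbf{k}}_j$ in which the removed zero sits in position $i$. When $k_j=1$ the decrement produces a new zero, so one must first apply the cyclic permutation $(a+1\ \cdots\ r)$ of Lemma \ref{cor:symmetric} to restore positivity before invoking Lemma \ref{lem:otop}; the net effect, captured by the second summand of \eqref{eq:ptoo}, is to delete the newly created zero, matching the branch in which $\widetilde{\mathbf{k}}_j$ removes the zero in position $j$. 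I would verify that these two geometric scenarios correspond precisely to the two possibilities ``either in position $j$ or $i$'' in the definition of $\widetilde{\mathbf{k}}_j$, using the computation of Example \ref{ex:rec} with $\mathbf{k}=(3,1,2,0,0,0,1)$ (which realizes both cases at once) as the template. Summing the matched terms over $j=1,\ldots,i_\mathbf{k}-1$ then yields \eqref{eq:rrev}.
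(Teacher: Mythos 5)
Your proposal is correct and takes essentially the same route as the paper's proof: both forget the mark $a=n+1-i_{\mathbf{k}}$ (the largest index whose $\omega$ class is absent from the monomial), apply Lemmas \ref{lem:omega-classes} and \ref{lem:sttil}, and then check that reversing exponent vectors turns the two cases of the resulting recursion (delete the old zero when $k_j>1$, delete the newly created zero when $k_j=1$) into the two branches of the definition of $\widetilde{\mathbf{k}}_j$. Your sum range $j=1,\ldots,i_{\mathbf{k}}-1$ is the correct reading, consistent with Definition \ref{def:asym} and Example \ref{ex:rec}; the bound $i_{\rev(\mathbf{k})}-1$ written in the statement of Proposition \ref{prop:rec} is evidently a typo.
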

\begin{proof}
  As illustrated in Example \ref{ex:rec}, a recursion for intersection numbers of omega classes may be described in terms of the exponent vectors of monomials $\prod \omega_i^{k_i}$ we wish to consider. Since $\omega_1, \omega_2, \omega_3 = 0$, we omit the first three $0$'s and consider the exponent vector $\mathbf{k} = (k_4, \ldots, k_n)$. We forget the highest labeled marked point $a$ such that $k_a = 0$, so in particular we are guaranteed that $k_i>0$ for all marks $i>a$. By Lemmas \ref{lem:omega-classes}, \ref{lem:sttil} one may express the intersection number $\omega^{\mathbf{k}}$ as a sum of intersection numbers corresponding to exponent vectors constructed as follows:
  \begin{itemize}
      \item the entries before $a$ are the same as the entries of $\mathbf{k}$;
      \item one obtains a summand for each $j>a$;
      \item if $k_j>1$, the new exponent vector is obtained by deleting the $0$ in position $a$ and replacing $k_j$ by $k_{j-1}$;
      \item if $k_j= 1$, the new exponent vector is obtained by deleting $k_j$.
  \end{itemize}
In order to connect this recursion with the combinatorics of parking functions, one has to reverse the order of the exponent vectors. Once one does that, the recursion just described becomes \eqref{eq:rrev}.
  \end{proof}

\begin{defi}\label{def:asym}
	Let $\mathbf{k}\in \Comp(n,n)$.  The \defn{asymmetric multinomial coefficients} $\multichoose{n}{\bf k}$ are defined by the recursion $\multichoose{1}{1}=1$ and \begin{equation}\multichoose{n}{\mathbf{k}}=\sum_{j=1}^{i_{\mathbf{k}}-1} \multichoose{n-1}{\widetilde{\mathbf{k}}_j},\end{equation} where $i_{\mathbf{k}}$ is the index of the leftmost $0$ in a composition $\mathbf{k}$.
\end{defi}

Proposition \ref{prop:rec} and Definition \ref{def:asym}, along with the initial conditions $$\int_{\Mbar_{0,4}}\omega_4 =  \int_{\Mbar_{0,4}}\psi_4 = 1,$$ lead to the following statement.
\begin{prop}\label{prop:otoamc}
For a weak composition $\mathbf{k}\in Comp(n-3,n-3)$,
\begin{equation}\label{eq:otoamc}
   \int_{\Mbar_{0,n}} \omega^{\bf{k}} =  \multichoose{n-3}{\rev(\mathbf{k})}. 
\end{equation}
\end{prop}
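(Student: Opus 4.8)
The plan is to prove \eqref{eq:otoamc} by induction on $n$, exploiting the fact that the two sides satisfy the same recursion and the same initial value. The recursion for the left-hand side is exactly Proposition \ref{prop:rec}, and the recursion for the right-hand side is, by construction, the defining recursion of the asymmetric multinomials in Definition \ref{def:asym}. Consequently the proof reduces to matching these two recursions term-by-term and checking that the base cases agree; no new geometric input beyond Proposition \ref{prop:rec} is required.

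For the base case I would take $n=4$. Here $\mathbf{k}\in\Comp(1,1)$ forces $\mathbf{k}=(1)$ and $\omega^{\mathbf{k}}=\omega_4=\psi_4$, so the initial condition $\int_{\Mbar_{0,4}}\omega_4=1$ recorded just before the statement matches $\multichoose{1}{1}=1$ from Definition \ref{def:asym}. This establishes \eqref{eq:otoamc} for $n=4$.

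For the inductive step, I would assume \eqref{eq:otoamc} for every composition in $\Comp(n-4,n-4)$ and fix $\mathbf{k}\in\Comp(n-3,n-3)$. Applying Proposition \ref{prop:rec} writes $\int_{\Mbar_{0,n}}\omega^{\rev(\mathbf{k})}$ as a sum of integrals $\int_{\Mbar_{0,n-1}}\omega^{\rev(\widetilde{\mathbf{k}}_j)}$, one term for each index $j$ less than the leftmost zero of $\mathbf{k}$. By the inductive hypothesis, $\int_{\Mbar_{0,n-1}}\omega^{\rev(\widetilde{\mathbf{k}}_j)}=\multichoose{n-4}{\rev(\rev(\widetilde{\mathbf{k}}_j))}=\multichoose{n-4}{\widetilde{\mathbf{k}}_j}$, so the right-hand side becomes $\sum_j\multichoose{n-4}{\widetilde{\mathbf{k}}_j}$, which is precisely the defining recursion of Definition \ref{def:asym} evaluated at $\mathbf{k}$, namely $\multichoose{n-3}{\mathbf{k}}$. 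Thus $\int_{\Mbar_{0,n}}\omega^{\rev(\mathbf{k})}=\multichoose{n-3}{\mathbf{k}}$, and relabeling $\mathbf{k}\leftrightarrow\rev(\mathbf{k})$ yields the stated identity $\int_{\Mbar_{0,n}}\omega^{\mathbf{k}}=\multichoose{n-3}{\rev(\mathbf{k})}$.

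The step I expect to require the most care is confirming that the geometric recursion of Proposition \ref{prop:rec} and the combinatorial recursion of Definition \ref{def:asym} are literally the same recursion once the exponent vectors are reversed. Concretely, one must verify three things at once: (i) that forgetting the highest mark $a$ with $\omega_a$ absent corresponds, after reversal, to locating the leftmost zero of $\mathbf{k}$, so that the two summation ranges coincide; (ii) that the case split $k_j>1$ versus $k_j=1$ produced by $\St$ in Lemma \ref{lem:sttil} collapses into the single operation $\widetilde{\mathbf{k}}_j$ of \emph{decrease $k_j$ by one and delete the leftmost zero}, since when $k_j=1$ the decremented entry itself becomes the new leftmost zero that is removed; and (iii) that the reindexing forced by the disappearance of the forgotten mark is exactly absorbed by the reversal convention built into the definition of $\omega^{\rev(\mathbf{k})}$. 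Once these correspondences are pinned down, the induction closes immediately, and Example \ref{ex:rec} already exhibits the matching explicitly in a representative case.
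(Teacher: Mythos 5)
Your proposal is correct and follows essentially the same route as the paper: the paper's proof is precisely the observation that, after reversing the exponent vector, the recursion of Proposition \ref{prop:rec} is the defining recursion of Definition \ref{def:asym}, and the initial conditions agree via $\int_{\Mbar_{0,4}}\omega_4 = 1 = \multichoose{1}{1}$. Your write-up simply makes the underlying induction on $n$ explicit, including the correct reading that the sum in Proposition \ref{prop:rec} runs over $j$ less than the index of the leftmost zero of $\mathbf{k}$.
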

\begin{proof}
    The intersection cycle $\omega^{\mathbf{k}}$ can be computed recursively by reversing the order of its exponent vector, and then applying the same recursion  that defines the asymmetric multinomial coefficient  $\multichoose{n-3}{\rev(\mathbf{k})}$. Since the initial conditions for the two recursions agree,  $\int_{\Mbar_{0,4}}\omega_4  = 1  = \multichoose{1}{1}$, the result follows.
    \end{proof}

For the remainder of this section we explore some combinatorial properties of these asymmetric multinomial coefficients.

\begin{lemma}\label{lem:Catalan}
    If $\bf k\in \Comp(n,n)$ is Catalan, then $\widetilde{\bf k}_j$ is also Catalan for any $j\le i_{\bf k}$.  Conversely, if $\bf{k}$ is not Catalan then $\widetilde{\bf k}_j$ is not Catalan for any $j\le i_{\bf k}$.
\end{lemma}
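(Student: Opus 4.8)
The plan is to reduce the whole statement to the behavior of the partial sums $S_m := k_1+\cdots+k_m$, since being Catalan is purely a condition on them: $\mathbf{k}\in\Comp(n,n)$ is Catalan exactly when $S_m\ge m$ for all $1\le m\le n$ (the case $m=n$ is automatic, as $S_n=n$). Write $i:=i_{\mathbf{k}}$ for the position of the leftmost zero. Because $k_1,\ldots,k_{i-1}\ge 1$, one always has $S_m\ge m$ for $m\le i-1$, whether or not $\mathbf{k}$ is Catalan; so both the Catalan property and any failure of it are visible only in the partial sums at positions $m\ge i$. The lemma thus becomes a comparison of these tail partial sums before and after applying the operation $\mathbf{k}\mapsto\widetilde{\mathbf{k}}_j$. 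Note also that if $i=1$ there is no admissible $j$ and the statement is vacuous, so we may assume $i\ge 2$.

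First I would record the two forms the operation takes for a fixed $j<i$. If $k_j=1$, decreasing $k_j$ creates the new leftmost zero at position $j$, so $\widetilde{\mathbf{k}}_j$ is obtained by deleting the entry $k_j$; its partial sums $S'$ satisfy $S'_m=S_m$ for $m<j$ and $S'_m=S_{m+1}-1$ for $m\ge j$. If $k_j\ge 2$, then position $i$ stays the leftmost zero, so $\widetilde{\mathbf{k}}_j$ is obtained by decrementing $k_j$ and deleting the zero in position $i$; here $S'_m=S_m$ for $m<j$, $S'_m=S_m-1$ for $j\le m<i$, and $S'_m=S_{m+1}-1$ for $m\ge i$. (Decrementing $k_j$ lowers every partial sum from position $j$ on by $1$, and deleting the zero merely reindexes the partial sums past position $i$.) These formulas drive both directions.

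For the forward implication I would assume $S_m\ge m$ for all $m$ and check $S'_m\ge m$ for all $m\le n-2$, which is exactly the Catalan condition for $\widetilde{\mathbf{k}}_j\in\Comp(n-1,n-1)$. In every range where $S'_m=S_m$ or $S'_m=S_{m+1}-1$, the requirement reduces to $S_m\ge m$ or $S_{m+1}\ge m+1$ with index at most $n-1$, which is just the hypothesis on $\mathbf{k}$. The one delicate range is $S'_m=S_m-1$ for $j\le m<i$ in the second form, where I need the strict inequality $S_m\ge m+1$; I expect this to be the main obstacle, since the Catalan inequality alone only gives $S_m\ge m$. It is resolved precisely by the hypothesis $k_j\ge 2$: among the first $m$ entries (all positive, as $m<i$), the entry $k_j$ with $j\le m$ contributes at least $2$, so $S_m\ge(m-1)+2=m+1$.

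For the converse I would begin with a violation $S_p<p$, $p<n$; by the opening remark necessarily $p\ge i$, and I claim position $p-1$ violates Catalan-ness of $\widetilde{\mathbf{k}}_j$. In the first form the shift formula $S'_m=S_{m+1}-1$ is valid for all $m\ge j$, and $p-1\ge i-1\ge j$, giving $S'_{p-1}=S_p-1<p-1$. In the second form $S'_m=S_{m+1}-1$ holds only for $m\ge i$, so I would first rule out $p=i$: a violation $S_i=S_{i-1}<i$ forces $S_{i-1}=i-1$, i.e.\ $k_1=\cdots=k_{i-1}=1$, contradicting $k_j\ge 2$; hence $p\ge i+1$, $p-1\ge i$, and again $S'_{p-1}=S_p-1<p-1$. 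Since $p<n$ and $i\ge 2$ give $1\le p-1\le n-2$, this is a genuine violation, so $\widetilde{\mathbf{k}}_j$ is not Catalan, completing the plan.
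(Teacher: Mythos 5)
Your proof is correct and takes essentially the same approach as the paper's: both arguments track the partial sums $k_1+\cdots+k_m$ through the operation $\mathbf{k}\mapsto\widetilde{\mathbf{k}}_j$, using that all entries before the leftmost zero are positive (so the Catalan inequalities are automatic there) and that past the removed zero each partial sum and its index both drop by one. Your write-up is simply more explicit, separating the cases $k_j=1$ and $k_j\ge 2$ and handling the edge case $p=i_{\mathbf{k}}$ directly, where the paper's terser proof compresses these into a single statement.
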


\begin{proof}
    For $t<j$, the partial sum $k_1+\cdots+k_t\ge t$ is the same in $\widetilde{\bf k}_j$ as in $\bf k$.  
    
    First suppose $\bf k$ is Catalan.  Since $k_1+\cdots+k_{j-1}\ge j-1$, the partial sums then remain large enough before the first zero in $\widetilde{\bf k}_j$.  After removing this zero, the remaining partial sums only decreased by $1$ from $\bf k$ to  $\widetilde{\bf k}_j$, and their index has reduced by $1$ as well, so $\widetilde{\bf k}_j$ is Catalan.
    
    If $\bf k$ is not Catalan, then $k_1+\cdots+k_t<t$ for some $t$.  If $t<j$ we are done.  Otherwise, the $(t-1)$st partial sum in $\widetilde{\bf k}_j$ is one less than $k_1+\cdots+k_t$ and so it is strictly less than $t-1$.  Thus $\widetilde{\bf k}_j$ is also not Catalan.
\end{proof}

Lemma \ref{lem:Catalan} allows us to determine which of the asymmetric multinomial coefficients are nonzero.  In particular, the smallest Catalan coefficient $\multichoose{1}{1}=1$ is nonzero, and the smallest non-Catalan coefficient, $\multichoose{2}{0,2}$, is zero since the sum in the recursion of Definition \ref{def:asym} is empty for sequences starting with $0$.  We therefore obtain the following characterization from Lemma \ref{lem:Catalan} by a simple induction on $n$.

\begin{corollary}
    The coefficient $\multichoose{n}{\bf k}$ is nonzero if and only if $\bf k\in \Comp(n,n)$ is Catalan.
\end{corollary}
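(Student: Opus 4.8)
The plan is to induct on $n$, feeding the recursion of Definition \ref{def:asym} into the two halves of Lemma \ref{lem:Catalan}. Before running the induction I would record one elementary preliminary: every asymmetric multinomial coefficient is a nonnegative integer. This is immediate from the recursion, since $\multichoose{1}{1}=1\ge 0$ and each $\multichoose{n}{\mathbf{k}}$ is a (possibly empty) sum of the coefficients $\multichoose{n-1}{\widetilde{\mathbf{k}}_j}$, which are nonnegative by the inductive hypothesis. Nonnegativity is precisely what allows a single nonzero summand to force the entire coefficient to be nonzero, with no possibility of cancellation. (Alternatively one could invoke Proposition \ref{prop:otoamc} to identify these coefficients with intersection numbers of $\omega$ classes on $\Mbar_{0,n}$, but the recursive argument is cleaner and self-contained.)

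For the base case $n=1$, the only element of $\Comp(1,1)$ is $(1)$, which is vacuously Catalan, and $\multichoose{1}{1}=1\neq 0$, so the equivalence holds. For the inductive step I would fix $n\ge 2$, assume the equivalence for $n-1$, and treat the two directions in turn. If $\mathbf{k}\in\Comp(n,n)$ is Catalan, then taking $j=1<n$ in the Catalan condition gives $k_1\ge 1$, so the leftmost zero has index $i_{\mathbf{k}}\ge 2$ and the sum $\sum_{j=1}^{i_{\mathbf{k}}-1}$ in Definition \ref{def:asym} contains at least the term $j=1$. By the first half of Lemma \ref{lem:Catalan}, $\widetilde{\mathbf{k}}_1$ is Catalan, so by induction $\multichoose{n-1}{\widetilde{\mathbf{k}}_1}\neq 0$; combined with nonnegativity of every summand this yields $\multichoose{n}{\mathbf{k}}\ge\multichoose{n-1}{\widetilde{\mathbf{k}}_1}>0$.

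Conversely, if $\mathbf{k}$ is not Catalan I would split on whether $k_1=0$. When $k_1=0$ we have $i_{\mathbf{k}}=1$, the recursion sum is empty, and $\multichoose{n}{\mathbf{k}}=0$ with no appeal to the Lemma. When $k_1\ge 1$ we have $i_{\mathbf{k}}\ge 2$ and the sum ranges over $1\le j\le i_{\mathbf{k}}-1$; the converse half of Lemma \ref{lem:Catalan} makes every $\widetilde{\mathbf{k}}_j$ non-Catalan, so by induction each $\multichoose{n-1}{\widetilde{\mathbf{k}}_j}=0$ and hence $\multichoose{n}{\mathbf{k}}=0$. This closes the induction. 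Since all the structural content lives in Lemma \ref{lem:Catalan}, I do not expect a serious obstacle; the only points genuinely requiring care are the nonnegativity observation, which is what upgrades ``one positive summand'' into ``the coefficient is nonzero,'' and the edge case $k_1=0$ (equivalently $i_{\mathbf{k}}=1$), where the empty-sum convention already forces the value $0$ and the Lemma is not invoked at all.
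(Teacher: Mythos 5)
Your proposal is correct and follows essentially the same route as the paper: an induction on $n$ driven by both halves of Lemma \ref{lem:Catalan}, anchored by the base case and the observation that compositions starting with $0$ yield an empty sum in the recursion of Definition \ref{def:asym}. The details you make explicit---the nonnegativity of all coefficients (ruling out cancellation) and the case split on $k_1=0$---are exactly the points the paper leaves implicit in calling it ``a simple induction on $n$.''
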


Finally, we note that the recursion in Definition \ref{def:asym} immediately gives rise to the following recursive formula for the generating function $$F_n(x_1,\ldots,x_n):=\sum_{\mathbf{k}\in \Comp(n,n)} \multichoose{n}{\bf k} x_1^{k_1}\cdots x_n^{k_n}.$$

Define $\Comp(n,n,i)$ to be the set of all compositions $\bf k\in \Comp(n,n)$ for which $i$ is the index of the first zero in $\bf k$, that is, $k_i=0$ and for all $j<i$, $k_j\neq 0$.  If no index of $\bf k$ is zero we say $i=n+1$ and write $\Comp(n,n,n+1)$ for the set of such parking functions.  We use the auxiliary generating functions $$F_{n,i}(x_1,\ldots,x_n)=\sum_{\mathbf{k}\in \Comp(n,n,i)}\multichoose{n}{\bf k}x_1^{k_1}\cdots x_n^{k_n}.$$  In order to simplify our notation, we write $X$ for the set of variables $x_1,\ldots,x_n$.  The notation $F(X\backslash x_i)$ means that we are plugging in $x_1,\ldots,x_{i-1},x_{i+1},\ldots,x_n$ into the function $F$.

\begin{prop}
  We have $$F_n(X)=\sum_{i=2}^{n+1} F_{n,i}(X)$$ where the functions $F_{n,i}(X)$ satisfy the recursion \begin{equation}\label{eq:gfrec}F_{n,i}(X)=\left(\sum_{j=1}^{i-1} x_jF_{n-1,i-1}(X\backslash x_j)\right)+(x_1+\cdots+x_{i-1}) \sum_{\ell=i}^{n}F_{n-1,\ell}(X\backslash x_i)\end{equation} with initial condition $F_{1,2}(x_1)=x_1$.
\end{prop}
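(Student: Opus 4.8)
The plan is to derive both identities directly from the defining recursion $\multichoose{n}{\mathbf{k}}=\sum_{j=1}^{i_{\mathbf{k}}-1}\multichoose{n-1}{\widetilde{\mathbf{k}}_j}$ of Definition \ref{def:asym}. The identity $F_n(X)=\sum_{i=2}^{n+1}F_{n,i}(X)$ is the easy part: the sets $\Comp(n,n,i)$ for $i=1,\ldots,n+1$ partition $\Comp(n,n)$ according to the location of the first zero, so $F_n=\sum_{i=1}^{n+1}F_{n,i}$ by definition. The term $i=1$ collects compositions with $k_1=0$, for which the recursion of Definition \ref{def:asym} has an empty index range and hence $\multichoose{n}{\mathbf{k}}=0$; thus $F_{n,1}\equiv 0$ and the sum may start at $i=2$.

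For the recursion \eqref{eq:gfrec}, I would substitute the defining recursion into $F_{n,i}$ and interchange the order of summation:
\[
F_{n,i}(X)=\sum_{j=1}^{i-1}\ \sum_{\mathbf{k}\in\Comp(n,n,i)}\multichoose{n-1}{\widetilde{\mathbf{k}}_j}\,x_1^{k_1}\cdots x_n^{k_n},
\]
which is legitimate since $k_j\ge 1$ for every $j<i=i_{\mathbf{k}}$. The core of the argument is then to analyze, for each fixed $j\in\{1,\ldots,i-1\}$, the two possibilities $k_j=1$ and $k_j>1$; these are exactly the two cases in the definition of $\widetilde{\mathbf{k}}_j$, and since $k_j\ge 1$ they are exhaustive.

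When $k_j=1$, decreasing $k_j$ creates a zero in position $j<i$, so $\widetilde{\mathbf{k}}_j$ is obtained by deleting position $j$; one checks that its first zero slides to index $i-1$, so $\widetilde{\mathbf{k}}_j\in\Comp(n-1,n-1,i-1)$, and that the monomial $x_1^{k_1}\cdots x_n^{k_n}$ factors as $x_j$ times the monomial attached to $\widetilde{\mathbf{k}}_j$ in the variable set $X\backslash x_j$. Running over all such $\mathbf{k}$ is a bijection onto $\Comp(n-1,n-1,i-1)$, inverted by reinserting a $1$ in position $j$; this yields the contribution $x_j\,F_{n-1,i-1}(X\backslash x_j)$ and, after summing over $j$, the first term of \eqref{eq:gfrec}. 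When $k_j>1$, the leftmost zero of $\mathbf{k}$ at position $i$ is unaffected, so $\widetilde{\mathbf{k}}_j$ deletes position $i$ and decrements position $j$; its first zero now lands at some index $\ell\in\{i,\ldots,n\}$ recording the location of the next zero of $\mathbf{k}$ after $i$, and $x_1^{k_1}\cdots x_n^{k_n}$ factors as $x_j$ times the monomial of $\widetilde{\mathbf{k}}_j$ in the variables $X\backslash x_i$ (there is no $x_i$ present since $k_i=0$). This sets up a bijection between such $\mathbf{k}$ and pairs $(\mathbf{m},j)$ with $\mathbf{m}\in\Comp(n-1,n-1,\ell)$, $\ell\ge i$, and $j\le i-1$, inverted by inserting a $0$ in position $i$ and incrementing position $j$; summing the factor $x_j$ over $j$ produces $x_1+\cdots+x_{i-1}$ and summing over $\ell$ produces $\sum_{\ell=i}^n F_{n-1,\ell}(X\backslash x_i)$, i.e.\ the second term of \eqref{eq:gfrec}. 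The base case $F_{1,2}(x_1)=x_1$ is immediate since $\Comp(1,1,2)=\{(1)\}$ and $\multichoose{1}{1}=1$.

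The step I expect to require the most care is the bookkeeping of the first-zero index together with the variable relabeling under $X\backslash x_j$ versus $X\backslash x_i$: one must verify that deleting position $j$ (case $k_j=1$) rigidly shifts the first zero to $i-1$, while deleting position $i$ (case $k_j>1$) leaves the first zero free to sit anywhere in $\{i,\ldots,n\}$, and that in each case the monomial accounting produces exactly the prefactor $x_j$. Confirming that both correspondences are genuine bijections, so that no composition is double-counted or omitted, is what upgrades the coefficient matching into the exact generating-function identity \eqref{eq:gfrec}.
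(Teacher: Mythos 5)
Your proposal is correct and follows essentially the same route as the paper: the paper's (much terser) proof likewise splits the defining recursion $\multichoose{n}{\mathbf{k}}=\sum_{j=1}^{i_{\mathbf{k}}-1}\multichoose{n-1}{\widetilde{\mathbf{k}}_j}$ according to whether $k_j=1$ (giving $x_jF_{n-1,i-1}(X\backslash x_j)$) or $k_j>1$ (giving $x_j\sum_{\ell=i}^{n}F_{n-1,\ell}(X\backslash x_i)$), then sums over $j$. You have simply made explicit the bookkeeping the paper leaves implicit (the vanishing of $F_{n,1}$, the tracking of the first-zero index, and the inverse insertions showing the correspondences are bijections), all of which checks out.
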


\begin{proof}
The term $x_jF_{n-1,i-1}(X\backslash x_j)$ enumerates the compositions of the form $\multichoose{n-1}{\widetilde{\bf k}_j}$ in which $k_j=1$, and the term $x_j\sum_{\ell=i}^{n}F_{n-1,\ell}(X\backslash x_i)$ enumerates the compositions $\multichoose{n-1}{\widetilde{\bf k}_j}$ in which $k_j>1$.  Summing over all possible $j$ completes the proof.
\end{proof}
Summing  recursion \eqref{eq:gfrec} over the index $i$ one obtains a compact recursion for the generating functions $F_n(X)$.
\begin{prop}
For $1\leq i\leq n+1$, denote 
$F_{n, \geq i}(X) = F_{n,i}(X)+\ldots +F_{n,n+1}(X)$. Then
\begin{equation}
 F_n(X) = \sum_{i=1}^n (x_1+\ldots+x_i) F_{n-1, \geq i} (X\smallsetminus x_i)    
\end{equation}

\end{prop}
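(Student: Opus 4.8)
The plan is to substitute the recursion \eqref{eq:gfrec} directly into the expression $F_n(X)=\sum_{i=2}^{n+1}F_{n,i}(X)$ from the previous proposition and then reorganize the resulting double sum into the compact form. Substituting \eqref{eq:gfrec} splits $F_n(X)$ into two pieces: a sum $A$ collecting all the $\sum_{j=1}^{i-1}x_j F_{n-1,i-1}(X\setminus x_j)$ terms, and a sum $B$ collecting all the $(x_1+\cdots+x_{i-1})\sum_{\ell=i}^{n}F_{n-1,\ell}(X\setminus x_i)$ terms. The goal is to show that $A+B$ collapses to $\sum_{i=1}^{n}(x_1+\cdots+x_i)F_{n-1,\geq i}(X\setminus x_i)$.

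To handle $A=\sum_{i=2}^{n+1}\sum_{j=1}^{i-1}x_j F_{n-1,i-1}(X\setminus x_j)$, I would first set $m=i-1$, so that $m$ runs from $1$ to $n$, and then interchange the order of summation: for a fixed $j$, the index $m$ ranges over $j\leq m\leq n$. The inner sum $\sum_{m=j}^{n}F_{n-1,m}(X\setminus x_j)$ is precisely $F_{n-1,\geq j}(X\setminus x_j)$, since for $F_{n-1}$ the largest admissible first-zero index is $n$. Renaming $j$ as $i$ then gives $A=\sum_{i=1}^{n}x_i\,F_{n-1,\geq i}(X\setminus x_i)$.

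For $B=\sum_{i=2}^{n+1}(x_1+\cdots+x_{i-1})\sum_{\ell=i}^{n}F_{n-1,\ell}(X\setminus x_i)$, I would observe that the $i=n+1$ term has an empty inner sum and hence vanishes, while each remaining inner sum is exactly $F_{n-1,\geq i}(X\setminus x_i)$. Because the prefactor $(x_1+\cdots+x_{i-1})$ is itself an empty sum at $i=1$, the range may be harmlessly extended to start at $i=1$, giving $B=\sum_{i=1}^{n}(x_1+\cdots+x_{i-1})F_{n-1,\geq i}(X\setminus x_i)$.

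Adding the two pieces termwise, the coefficient of $F_{n-1,\geq i}(X\setminus x_i)$ becomes $x_i+(x_1+\cdots+x_{i-1})=x_1+\cdots+x_i$, which is the claimed identity. I expect the computation to be essentially routine bookkeeping; the only delicate points are the swap of summation order in $A$ and the careful treatment of the empty-sum boundary cases at $i=1$ and $i=n+1$, which must line up so that the two coefficient contributions recombine cleanly into the single partial sum $x_1+\cdots+x_i$.
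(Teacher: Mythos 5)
Your proposal is correct and takes exactly the approach the paper intends: the paper gives no written proof of this proposition, only the preceding remark that it follows by ``summing recursion \eqref{eq:gfrec} over the index $i$,'' and your argument is precisely that summation carried out in full. The index shift $m=i-1$, the interchange of summation in the piece $A$, and the empty-sum boundary cases at $i=1$ and $i=n+1$ are all handled correctly, so your write-up supplies the routine bookkeeping the paper omits.
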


\section{Parking functions and asymmetric multinomials}\label{sec:parking-functions}

In this section we give a combinatorial interpretation of $\multichoose{n}{\bf k}$.

\begin{defi}
	For a label $a$ in a unit square of a parking function $P$, define its \defn{dominance index}, written $d_P(a)$, to be the number of columns to its left that contain no label greater than $a$.  
\end{defi}

\begin{defi}
A parking function $P$ is \defn{column-restricted} if for every label $a$, $$d_P(a)<a.$$ We write $\CPF(n,\mathbf{k})$ to denote the set of all column-restricted parking functions having columns of lengths $k_1,k_2,\ldots,k_n$ from left to right.
\end{defi}

An example is shown in Figure \ref{fig:CPF}.

\begin{figure}[b]
	\begin{center}
		\includegraphics{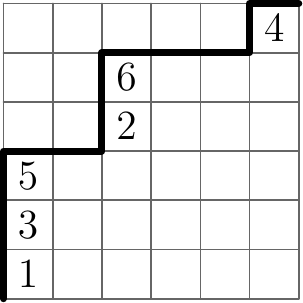}
	\end{center}
	\caption{\label{fig:CPF} A column-restricted parking function $P$.  We have $d_P(1)=d_{P}(2)=d_{P}(5)=0$, $d_P(2)=1$, $d_P(6)=2$, and $d_P(4)=3$.}
\end{figure}

\begin{thm}\label{thm:CPF}
	We have $$|\CPF(n,\mathbf{k})|=\multichoose{n}{\mathbf{k}}$$ for any composition $\mathbf{k}\in \Comp(n,n)$.
\end{thm}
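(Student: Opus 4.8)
The plan is to prove the identity by induction on $n$, showing that $|\CPF(n,\mathbf{k})|$ obeys exactly the recursion of Definition \ref{def:asym} together with the same initial condition. If $\mathbf{k}$ is not Catalan then both sides vanish (the right side by the corollary characterizing the nonzero $\multichoose{n}{\mathbf{k}}$ as precisely the Catalan ones, the left side because no Dyck path has column-height sequence $\mathbf{k}$), so I may assume $\mathbf{k}$ is Catalan. The base case $n=1$ is immediate: $\CPF(1,(1))$ consists of the single parking function with label $1$ in the unique column, which is column-restricted since $d_P(1)=0<1$, matching $\multichoose{1}{1}=1$.

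For the inductive step I will exhibit a bijection
$$\Psi:\CPF(n,\mathbf{k})\;\longrightarrow\;\bigsqcup_{j=1}^{i_{\mathbf{k}}-1}\CPF(n-1,\widetilde{\mathbf{k}}_j),$$
built around removing the \emph{smallest} label. Given $P\in\CPF(n,\mathbf{k})$, let $j$ be the column containing the label $1$; delete that label, delete the leftmost empty column of the result, and relabel the remaining entries $2,3,\ldots,n$ as $1,2,\ldots,n-1$ (an order-preserving shift keeping each column increasing). The first key point is that $j\in\{1,\ldots,i_{\mathbf{k}}-1\}$: column-restriction $d_P(1)<1$ forces $d_P(1)=0$, i.e. there is no empty column to the left of label $1$; since the leftmost empty column sits in position $i_{\mathbf{k}}$, label $1$ lies in one of the first $i_{\mathbf{k}}-1$ columns. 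The second key point is that $\Psi$ lands in the correct target: if the column $j$ of label $1$ had height $1$ it becomes empty and is itself the new leftmost empty column, while if its height exceeds $1$ the leftmost empty column is still the original one in position $i_{\mathbf{k}}$; in either case the resulting height sequence is exactly $\widetilde{\mathbf{k}}_j$, which is Catalan by Lemma \ref{lem:Catalan}, so the output is a genuine parking function.

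The crux — and the step I expect to be the main obstacle — is verifying that column-restriction is preserved, i.e. that the relabeled image $P'$ again satisfies $d_{P'}(a)<a$ for every label. Tracking a label from $a$ in $P$ to $a-1$ in $P'$, the relabeling is order-preserving and deleting label $1$ does not change which columns contain a label exceeding $a$, so $d_{P'}(a-1)$ equals $d_P(a)$ minus a correction of $1$ precisely when the deleted column (empty, or the singleton $\{1\}$) lay to the left of $a$. When it lay to the left, the inequality $d_P(a)\le a-1$ improves to $d_{P'}(a-1)\le a-2<a-1$. When it did not, I must rule out the extremal case $d_P(a)=a-1$: there the column $\gamma$ of $a$ satisfies $\gamma<i_{\mathbf{k}}$, so no empty column precedes $\gamma$, and the $a-1$ columns counted to its left are nonempty and carry distinct labels from $\{1,\ldots,a-1\}$; since in this case the column of label $1$ is either excluded from the count or (being of height greater than $1$) consumes two of these labels, a pigeonhole estimate forces the count to be at most $a-2$, contradicting $d_P(a)=a-1$. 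Hence $d_{P'}(a-1)\le a-2$ here as well. This labelling pigeonhole is the technical heart of the argument.

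Finally I will construct the inverse of $\Psi$ and check that it is two-sided. Given $j\le i_{\mathbf{k}}-1$ and $P'\in\CPF(n-1,\widetilde{\mathbf{k}}_j)$, shift every label up by $1$ and then either create a new singleton column $\{1\}$ in position $j$ (if $k_j=1$) or insert label $1$ at the bottom of column $j$ and reinsert an empty column in position $i_{\mathbf{k}}$ (if $k_j>1$). Since columns $1,\ldots,i_{\mathbf{k}}-1$ of $\mathbf{k}$ are nonempty, there is no empty column to the left of $j$, so the reconstructed $P$ has $d_P(1)=0$; the dominance-index inequalities run in reverse (each correction raises a dominance index by at most $1$) to confirm that $P$ is column-restricted, and by construction $\Psi(P)=(j,P')$. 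This establishes the recursion $|\CPF(n,\mathbf{k})|=\sum_{j=1}^{i_{\mathbf{k}}-1}|\CPF(n-1,\widetilde{\mathbf{k}}_j)|$, and the theorem follows by induction.
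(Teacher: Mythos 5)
Your proposal is correct and takes essentially the same route as the paper: both decompose $\CPF(n,\mathbf{k})$ according to the column $j<i_{\mathbf{k}}$ containing the label $1$, apply the bijection that deletes that label and the leftmost empty column and decrements the remaining labels, and invert by the same case-split insertion ($k_j=1$ versus $k_j>1$), thereby verifying the recursion of Definition \ref{def:asym}. The only cosmetic difference is that you organize the column-restriction check by the position of the deleted column relative to a given label $a$, using a pigeonhole count on $\{1,\ldots,a-1\}$, whereas the paper cases on the position of $a$ relative to the label $1$; the underlying counting argument is the same.
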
 

\begin{proof}
  We show that $|\CPF(n,\mathbf{k})|$ satisfies the recursion of Definition \ref{def:asym}.  First, $|\CPF(1,1)|$ is clearly $1$, since there is only one Dyck path from $(0,0)$ to $(1,1)$ and only one column in which to put the $1$. 
  
  For the recursion, define $\CPF(n,\mathbf{k},j)$ to be the subset of $\CPF(n,\mathbf{k})$ having the $1$ in the $j$-th column. Note that the condition of being column-restricted means that $j<i$ where $k_i$ is the leftmost $0$ in $\mathbf{k}$.  Thus $$\CPF(n,\mathbf{k})=\bigsqcup_{j=1}^{i-1} \CPF(n,\mathbf{k},j).$$  
  It therefore suffices to prove that $\CPF(n,\mathbf{k},j)$ is in bijection with $\CPF(n-1,\widetilde{\mathbf{k}}_j)$ for all $j<i$.
  
 We define a bijection $\varphi:\CPF(n,\mathbf{k},j)\to \CPF(n-1,\widetilde{\mathbf{k}}_j)$ as follows.  For $P\in \CPF(n,\mathbf{k},j)$, define $\varphi(P)$ by removing the row containing $1$ in $P$, then removing the first empty column in the resulting diagram (which may be the column that contained $1$ if that column is now empty), and finally decrementing all remaining labels by $1$.  (See Figure \ref{fig:phi}.)
 
\begin{figure}
	\begin{center}
		\includegraphics{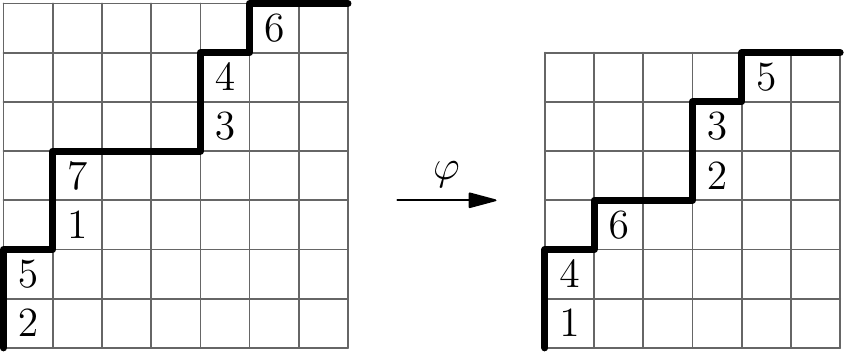}
	\end{center}
	\caption{\label{fig:phi} An example of the map $\varphi$.}
\end{figure}

 Notice that the sequence of column heights of $\varphi(P)$ is $\widetilde{\mathbf{k}}_j$, which is Catalan by Lemma \ref{lem:Catalan}.  It follows that $\varphi(P)$ is a parking function of size $n-1$.
 
 We now show that $\varphi(P)$ is column-restricted. Consider a label $a$ in $P$.  
 
 \textbf{Case 1.} First suppose $a$ is to the left of $1$ in $P$.  Since $P$ is column-restricted, there are no empty columns to the left of the $1$, and hence no empty columns to the left of $a$.  Moreover, the only numbers less than $a$ that can appear to the left of $a$ are the $a-2$ numbers $2,3,\ldots,a-1$, so $$\dom_P(a)\le a-2.$$
 After applying $\varphi$, the label $a$ is replaced by $a-1$, and the labels to its left are decreased by $1$, so $$\dom_{\varphi(P)}(a-1)=\dom_P(a)\le a-2<a-1.$$     Thus the column-restricted condition holds for $a-1$ in $\varphi(P)$.
 
 \textbf{Case 2.} Suppose $a$ is weakly to the right of the $1$ in $P$.  If the $1$ is in its own column in $P$, then removing the row and column of the $1$ to form $\varphi$ decreases the dominance index of $a$ by $1$, and hence $$\dom_{\varphi(P)}(a-1)=\dom_{P}(a)-1\le a-2.$$  If instead the $1$ is in a column with other entries, the proof goes through as in Case 1 if $a$ is to the left of the first empty column, and if $a$ is to the right of the first empty column then again its dominance index decreases by $1$ after deleting the empty column, and we are done.  
 
 We have now shown that $\varphi$ is a well-defined map from $\CPF(n,\mathbf{k},j)$ to $\CPF(n-1,\widetilde{\mathbf{k}}_j)$.  To see that it is a bijection, note that, given a parking function $Q$ in $\CPF(n-1,\widetilde{\mathbf{k}}_j)$, we can first increment each entry by $1$ and then insert a $1$ as follows.  If $k_j=1$, we insert a column consisting of the letter $1$ just after the $(j-1)$st column in $Q$.  If $k_j>1$, we insert a new empty column after the $(i-1)$st column in $Q$ (where $i=\min\{t: k_t=0\}$), and insert a $1$ into column $j$.  This reverses $\varphi$.
\end{proof}

\subsection{\texorpdfstring{Counting by $(2n-1)!!$}{Counting by (2n-1)!!}}

This section proves Theorem \ref{thm:main-2}, by establishing that \begin{equation}\label{eqn:double-factorial}
    \sum_{\mathbf{k}\in \Comp(n,n)} \multichoose{n}{\mathbf{k}}=(2n-1)!!
\end{equation}  The left hand side of (\ref{eqn:double-factorial}) simply counts $|\CPF(n)|$ where $\CPF(n)$ is the set of all column-restricted parking functions of height $n$.  We will show that $|\CPF(n)|=(2n-1)!!$ for all $n\ge 1$.  

Since $|\CPF(1)|=1$, it suffices to show that $$|\CPF(n)|=(2n-1)|\CPF(n-1)|$$ for all $n\ge 2$.  	To do so, note that any Dyck path from $(0,0)$ to $(n-1,n-1)$ passes through exactly $2n-1$ lattice points.  We will show that we can ``insert'' a label $n$ at each of these points to construct a column-restricted parking function of height $n$ from one of height $n-1$.  

\begin{defi}
   A \defn{pointed column-restricted parking function} of size $n$ is a pair $(P,p)$ where $P\in \CPF(n)$ and $p$ is one of the $2n+1$ lattice points on its associated Dyck path.  We write $\CPF_{\bullet}(n)$ for the set of all pointed column-restricted parking functions of size $n$.
\end{defi}

With this in mind, we define the following insertion map.

\begin{defi}\label{def:ins}
  For $(P,p)\in \CPF_{\bullet}(n-1)$, define $\ins(P,p)$ as follows.  Let $P_{p\to}$ be the tail of $P$ (both the path and labels) after the point $p$. 
  
  \begin{enumerate}[left=2\parindent]
      \item[\textbf{Step 1.}] Shift $P_{p\to}$ one step up and one step right.  Connect the newly separated paths by an up step followed by a right step, and label the new up step by $n$.
      
      \item[\textbf{Step 2.}] Let $C_1,\ldots,C_t$ be the columns that contain some entry whose dominance index changed upon performing Step 1 above.  Move the column $C_1$ into the rightmost empty column to its left, then move $C_2$ into the rightmost empty column to its left (which may be the column that $C_1$ occupied before), and so on.  
  \end{enumerate}
  
  The result is $\ins(P,p)$.  Figure \ref{fig:ins} gives a detailed example of this algorithm.
\end{defi}

\begin{figure}
	\begin{center}
		\includegraphics{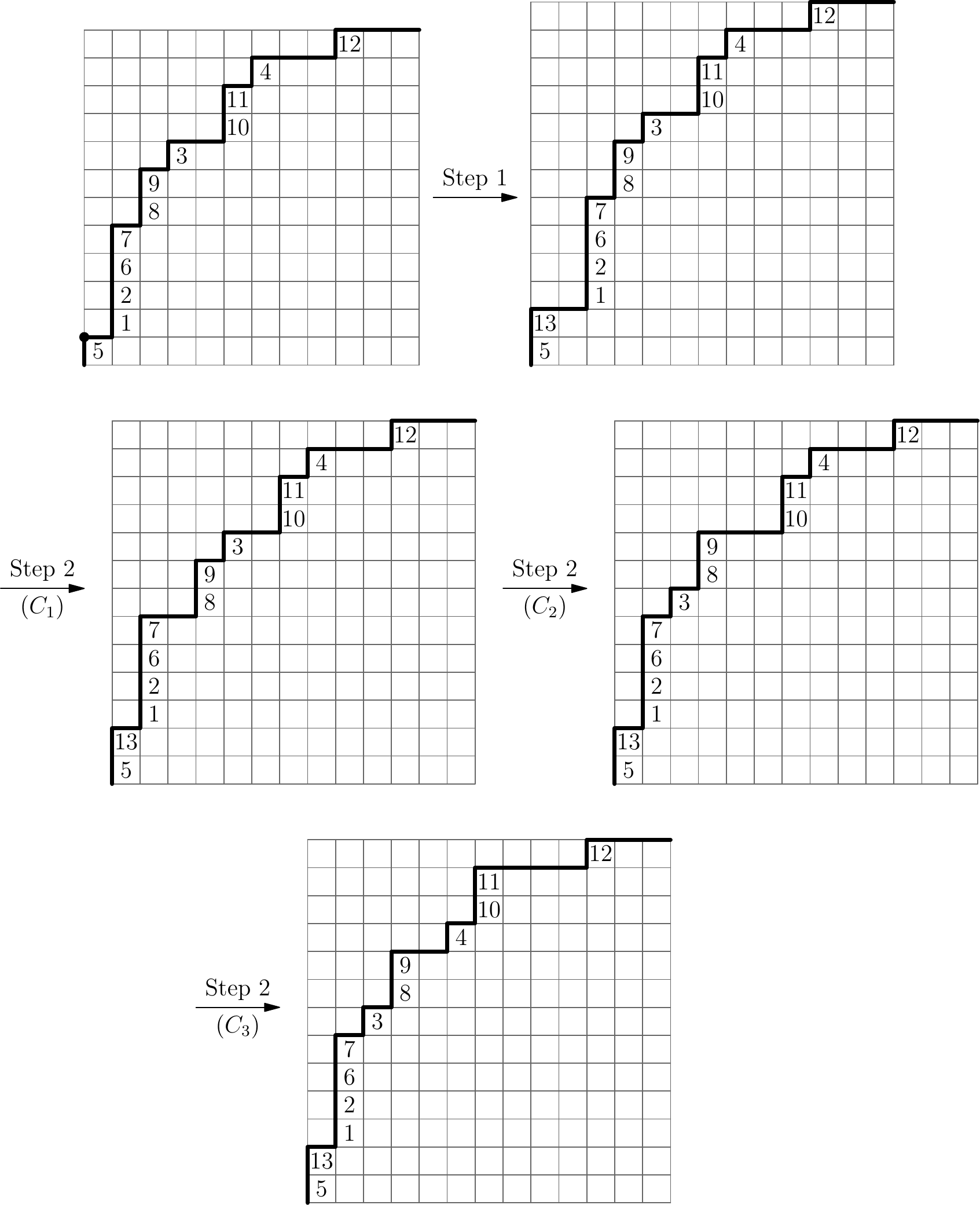}
	\end{center}
	\caption{\label{fig:ins}
		An example of the computation of the map $\ins$, where the dotted corner above the $5$ in the left hand diagram indicates the point $p$ at which we insert $n=13$.  We first perform Step 1 of the algorithm and then break Step 2 down into its individual column moves.}
\end{figure}

We first prove several technical lemmata about the map $\ins$.

\begin{lemma}\label{lem:corners}
    In Step 2 of computing $\ins(P,p)$, we have $t>0$ (i.e., Step 2 is nontrivial) if and only if $p$ is an upper left corner of the Dyck path, that is, it is between an up step and a right step.
    
    Moreover, in this case, let $r$ be the label just below $p$.  Then the labels whose dominance index changes in Step 1 of $\ins$ are precisely those labels $a<r$ to the right of $r$, and their dominance index increases by exactly $1$.
\end{lemma}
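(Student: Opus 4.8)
The plan is to analyze carefully how the dominance index $\dom_P(a)$ of each label $a$ is affected by Step 1 of the insertion map, which shifts the tail $P_{p\to}$ up-and-right and glues in a new up-step (labeled $n$) followed by a right-step at the point $p$. Recall that $\dom_P(a)$ counts the columns strictly to the left of $a$ that contain no label exceeding $a$. Since Step 1 only modifies the path (and the horizontal positions of the columns in the tail) but does not change which labels lie in which relative column nor their values, the only way a label's dominance index can change is if the set of columns to its left acquires or loses a ``blocking'' label greater than itself, or if the count of such left-columns changes. I would first argue that the shift in Step 1 genuinely alters the column structure only when $p$ is an upper-left corner, i.e.\ wedged between an up-step and a right-step.

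First I would dispose of the ``only if'' direction. If $p$ is not an upper-left corner, then $p$ lies either on the interior of a vertical run, on the interior of a horizontal run, or at a lower-right (outer) corner between a right-step and an up-step. In each of these cases I claim the new up-right insertion can be absorbed into the existing path geometry in a way that does not move any pre-existing column relative to any label: the inserted up-step simply extends a column that was already being built (or starts a new column flush with the boundary without displacing a labeled square horizontally). Concretely, I would check that the horizontal coordinate of every labeled square in $P_{p\to}$ relative to every labeled square in the head $P_{\to p}$ is preserved, so no label enters or leaves the ``left region'' of any other label, and hence every $\dom$ value is unchanged, giving $t=0$. This case check is slightly fiddly but purely local and should follow directly from the definition of Dyck path corners together with the description of how column heights are read off.

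For the ``if'' direction and the quantitative ``Moreover'' claim, suppose $p$ is an upper-left corner, and let $r$ be the label in the square just below $p$ (the top label of the column ending at $p$). Step 1 inserts a new up-step immediately above $p$ and then a right-step, effectively shifting the entire tail $P_{p\to}$ one unit to the right. The key observation is that this shift inserts a brand-new column boundary to the right of $r$: every labeled square of the tail now sits one column further right than before, so for a tail label $a$, the number of columns to its left increases by exactly $1$, namely the newly created column containing $r$ (which now has $r$ at its top and the new $n$-labeled square, but $n$ is larger than everything and doesn't affect the ``no label greater than $a$'' condition beyond $r$ itself). Thus $\dom$ can only increase for tail labels, and it increases by exactly $1$ precisely when this newly inserted left-column contains no label greater than $a$ — which happens exactly when $r \le a$ fails to block it, i.e.\ when $a < r$. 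I would therefore conclude that the labels whose dominance index changes are exactly the labels $a < r$ lying to the right of $r$, each increasing by $1$, while any tail label $a \ge r$ sees the new column as ``blocked'' by $r$ (or by a larger label) and so its index is unchanged, and every head label is untouched since the head is not shifted.

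The main obstacle I anticipate is making the bookkeeping in the ``if'' direction fully rigorous, specifically verifying that the single newly created column to the left of each tail label is the \emph{only} change to its set of left-columns, and that the relevant blocking label in that column is precisely $r$ (rather than some larger label that might also have migrated). This requires pinning down exactly which square $r$ occupies after the shift and confirming that no label greater than $r$ ends up in the new column — a claim that rests on $r$ being the topmost label of the column terminating at the corner $p$, so that everything strictly above $p$ belongs to the shifted tail and everything at or below $p$ in that column is $\le r$. Once the geometry of the corner is correctly set up, the dominance-index computation is a direct count, so I expect the difficulty to be expository precision rather than any deep combinatorial content.
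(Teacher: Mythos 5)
Your overall strategy (split on whether $p$ is an upper-left corner, then track each label's dominance index through Step 1) is the same as the paper's, but the bookkeeping is wrong in both directions, and in the key case it is wrong in a way that cannot be patched without the idea you are missing. When $p$ is an upper-left corner, Step 1 does \emph{two} things: it places $n$ on top of the column whose top label is $r$, and — because the tail's first step is a right step that gets shifted up-and-right — it creates a brand-new \emph{empty} column immediately to the right of $r$'s column (two consecutive right steps now occur there). Your account has only one relevant new column, "containing $r$ \ldots and the new $n$-labeled square''; that is, you treat $r$'s column (which was already strictly to the left of every tail label before the insertion, so it contributes no new count) as the newly added left-column, and you never notice the empty column at all. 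The correct computation is a cancellation: for a tail label $a>r$, the column of $r$ used to contribute $1$ to the dominance index of $a$ (its maximum was $r<a$) but now contributes $0$ (it contains $n>a$), while the new empty column contributes $+1$, so the net change is $0$; for $a<r$, the $r$-column contributes $0$ both before and after, and the empty column gives $+1$. Your single-column model cannot produce this cancellation, and your blocking logic comes out inverted: a column containing $r$ (with $n$ ignored, as you propose) contains no label greater than $a$ exactly when $a\ge r$, so your argument, read literally, says the dominance index increases for $a\ge r$ and is unchanged for $a<r$ — the opposite of the lemma. You then assert the lemma's conclusion, but it does not follow from your analysis. Relatedly, in your final paragraph you want to confirm that "no label greater than $r$ ends up in the new column,'' when in fact the new column is empty, and the label $n>r$ landing in $r$'s \emph{old} column is precisely what makes the count work.

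The "only if'' direction has a parallel problem: your proposed verification — that the horizontal coordinate of every tail square relative to every head square is preserved — is false, since the entire tail moves one column to the right, and when $p$ lies between two up-steps the column through $p$ is even split into two columns. What is true, and what the paper uses, is that the column created by the insertion contains $n$, which exceeds every other label and therefore contributes $0$ to every dominance index; in the split case one also checks that the upper half of the split column has the same maximum as the original column, so its contribution to labels farther right is unchanged. So your conclusion $t=0$ is correct, but the check you propose to carry out would fail, and the argument needs the "new column is blocked by $n$'' observation rather than any claim of rigidity of the column structure.
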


\begin{proof}
    First suppose $p$ is not an upper left corner.  Then $p$ is either preceded by a right step or is between two up steps.  In the former case, step $1$ of computing $\ins(P,p)$ simply inserts a new column containing only the entry $n$.  Since all entries in these columns are less than $n$, their dominance index does not change.  In the latter case, if $p$ is between two up steps, the column to its right is split into two columns and the $n$ is inserted at the top of the first half.  Thus the column containing $n$ does not add to the dominance index of any entry to its right, and we are done as before.
    
    Now suppose $p$ is an upper left corner.  Let $r$ be the label just below $p$, at the top of its column.  Then Step 1 inserts $n$ directly above $r$, and adds an empty column to its right.  Let $a$ be a label to the right of $r$.  If $a>r$, its dominance index decreased by $1$ from inserting $n$ above $r$, but increased by $1$ from the addition of the empty column, so its dominance index was unchanged.  If instead $a<r$, then its dominance index simply increases by $1$ via the new empty column.
\end{proof}

Lemma \ref{lem:corners} gives rise to the following natural definitions.

\begin{defi}
   We write $\Noncorner(n-1)$  to denote the pairs $(P,p)\in \CPF_\bullet(n-1)$ in which $p$ is not an upper left corner, and  $\Corner(n-1)$  for pairs where $p$ is an upper-left corner.  We refer to these types as \defn{good} and \defn{bad} pointed CPF's respectively.
\end{defi}

We can also tell from the output of $\ins(P,p)$ whether $(P,p)$ is good or bad.

\begin{lemma}
  We have $(P,p)\in \Noncorner(n-1)$ if and only if, in $\ins(P,p)$, either (a) there is no label below $n$ in its column, or (b) there is a label $r$ below $n$ and the square up-and-right from $n$ contains a label $a>r$.  
\end{lemma}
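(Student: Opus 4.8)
The plan is to exploit two complementary dichotomies. On inputs we have $\CPF_\bullet(n-1)=\Noncorner(n-1)\sqcup\Corner(n-1)$; on outputs, the local configuration around $n$ either satisfies ``(a) or (b)'' or satisfies its negation (there is a label below $n$, and the square up-and-right of $n$ is empty or carries a label smaller than the one directly below $n$). Since each of these is a genuine partition, it suffices to prove the two implications: if $(P,p)$ is good then $\ins(P,p)$ satisfies (a) or (b), and if $(P,p)$ is bad then it satisfies neither. Together these yield the stated equivalence.

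For the good case I would use that, by Lemma~\ref{lem:corners}, a non-corner point produces no change of dominance index, so Step~2 is vacuous and $\ins(P,p)$ is exactly the Step~1 output. If $p$ is preceded by a right step, Step~1 inserts $n$ as a new column by itself, so $n$ has no label beneath it and (a) holds. If $p$ lies between two up-steps, Step~1 splits the column just to the right of $p$, leaving $n$ at the top of the lower piece; the label $r$ immediately below $n$ is the former top of that piece, while the bottom label $a$ of the upper piece is pushed into the square up-and-right of $n$. Since $a$ lay directly above $r$ in one column of $P$ and column labels increase upward, $a>r$, so (b) holds.

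For the bad case, write $p=(x_0,y_0)$ and let $r$ be the label just below $p$, which is the top of its column (column $x_0+1$) at height $y_0$. By Lemma~\ref{lem:corners}, Step~1 sets $n$ directly above $r$ and inserts an empty column immediately to the right of $n$, and the only labels whose dominance index changes are the labels $a<r$ lying to the right of $r$. Since $n$'s column contains no such label and is not empty, it is never relocated in Step~2; hence $r$ stays directly below $n$ and (a) fails. It remains to kill (b), i.e.\ to show the square up-and-right of $n$ is empty or holds a label $<r$.

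This last point is the crux, and I would establish it from two facts about Step~2. First, every relocation move sends a column leftward into the rightmost empty column to its left, and that target is always at a position $\ge x_0+2$, so no column weakly to the left of $n$ (positions $\le x_0+1$) is disturbed. Consequently the Dyck path of $\ins(P,p)$ agrees with that of $P$ through column $x_0$, the top of column $x_0+1$ is $n$ at height $y_0+1$, and the path therefore enters column $x_0+2$ at height $y_0+1$; so the lowest label of column $x_0+2$, if any, lands precisely in the square up-and-right of $n$. Second, position $x_0+2$ begins empty after Step~1 and only relocated columns ever move, and each relocated column contains a label $a<r$, hence has minimum label $<r$. Combining these: either position $x_0+2$ stays empty (when Step~2 is trivial, or it never receives a column), so there is no label up-and-right of $n$, or it is filled by a relocated column whose lowest label, which is $<r$, occupies that square. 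In both cases (b) fails. The main obstacle is exactly the bookkeeping of the cascade of column moves in Step~2; the two facts above isolate precisely what is needed --- that the moves only push small-labeled columns into positions to the right of $n$, leaving $n$'s immediate neighborhood controlled --- so that the delicate tracking of absolute heights reduces to these two \emph{local} observations.
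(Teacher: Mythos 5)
Your proof is correct and takes essentially the same approach as the paper: the paper's proof is a one-line appeal to ``the same casework as in Lemma~\ref{lem:corners}'', which is precisely the corner/non-corner case analysis you carry out, reading off condition (a) or (b) from the Step~1 configurations. The only difference is that you also track explicitly how the column moves of Step~2 affect the square up-and-right of $n$ in the bad case (it stays empty or receives a relocated column whose minimum label is $<r$), a point the paper leaves implicit in its citation of Lemma~\ref{lem:corners}; this is added detail, not a different method.
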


\begin{proof}
    This follows immediately from the same casework as in Lemma \ref{lem:corners}.
\end{proof}

We therefore may define good and bad (non-pointed) parking functions of size $n$ as well.

\begin{defi}\label{def:GPF-BPF}
A parking function $Q$ in $\CPF(n)$ is \defn{good} if either (a) there is no label below $n$ in its column, or (b) there is a label $r$ below $n$ and the square up-and-right from $n$ contains an entry $c>r$.  If $Q$ is not good, we call it \defn{bad}, and this occurs if and only if the square below $n$ contains a label $r$ and the square up-and-right from $n$ either is empty or contains a label $c$ with $c<r$.

We write $\Good(n)$ and $\Bad(n)$ for the sets of good and bad column-restricted parking functions of height $n$, respectively.
\end{defi}

\begin{example}
The example shown in Figure \ref{fig:ins} starts with a bad pointed parking function $(P,p)$, and the output $\ins(P,p)$ is bad as well.  The two examples shown in Figure \ref{fig:GPF} illustrate the map $\ins$ on good pointed parking functions, and the output $\ins(P,p)$ is good in these cases. 
\end{example}

\begin{figure}
	\begin{center}
		\includegraphics{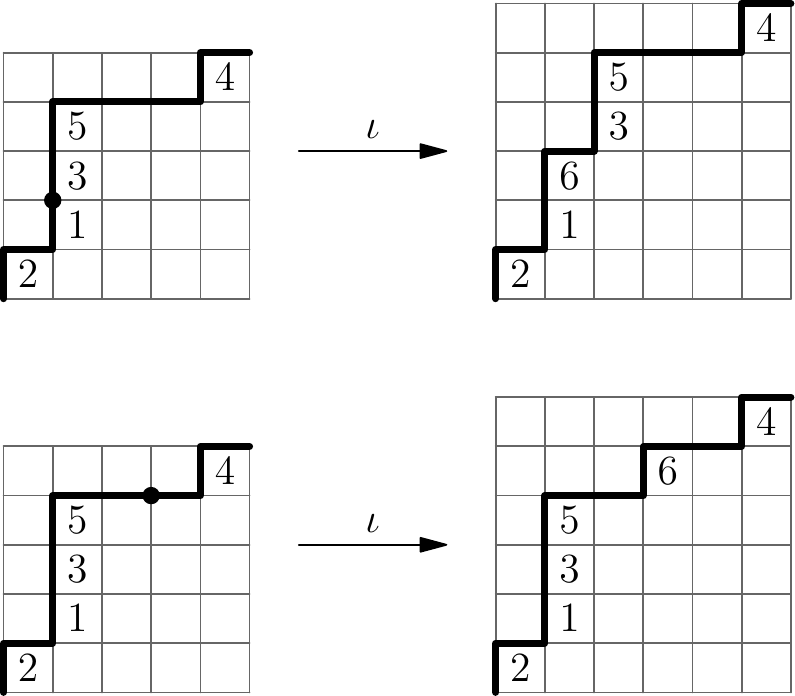}
	\end{center}
	\caption{\label{fig:GPF}Two examples of $\ins$ applied to a good pointed column restricted parking function $(P,p)\in \Noncorner(5)$, where the black dot marks the point $p$ in each of the left hand parking functions above.  Note that only Step 1 applies in each case, as Step 2 is vacuous.}
\end{figure}

\begin{lemma}
	The map $\ins:\CPF_\bullet(n-1)\to \CPF(n)$ is well-defined, and it restricts to maps $$\ins:\Noncorner(n-1)\to \Good(n)$$ and $$\ins:\Corner(n-1)\to \Bad(n).$$
\end{lemma}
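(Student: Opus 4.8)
The plan is to decouple well-definedness from the good/bad classification. Well-definedness means checking that $\ins(P,p)$ is a genuine element of $\CPF(n)$: that it is supported on a Dyck path of height $n$, that its labeling is a valid parking function, and that it is column-restricted. Granting these, the two restriction statements follow at once from the preceding lemma together with Definition \ref{def:GPF-BPF}: that lemma says $(P,p)\in\Noncorner(n-1)$ exactly when $\ins(P,p)$ satisfies condition (a) or (b), and Definition \ref{def:GPF-BPF} says this is exactly the condition for $\ins(P,p)\in\Good(n)$; the complementary case gives $\ins:\Corner(n-1)\to\Bad(n)$.

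For the path and labeling I would argue as follows. Step 1 turns a Dyck path ending at $(n-1,n-1)$ into one ending at $(n,n)$: inserting an up-step followed by a right-step at $p$ and translating the tail $P_{p\to}$ by $(1,1)$ keeps every point weakly above the diagonal, since the translated tail inherits $y\ge x$ from $P$ and the inserted up-step only moves further above $y=x$. Step 2 then repeatedly moves a column of positive height leftward into a slot of height zero; on the level of the column-height sequence this transports positive mass to the left and leaves a zero behind, so no partial sum decreases and the Catalan (hence Dyck) condition is preserved. For the labeling, the new entry $n$ is maximal and is placed at the top of its column by construction, and every other column keeps its bottom-to-top order under the rearrangement of Step 2, so the result is a valid parking function.

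The crux is the column-restricted inequality $d(a)<a$. In the non-corner case Lemma \ref{lem:corners} guarantees that Step 1 alters no dominance index and that Step 2 is vacuous, so each old label retains $d_P(a)<a$ while $d(n)\le n-1<n$ because there are exactly $n$ columns; hence the output is column-restricted. In the corner case Lemma \ref{lem:corners} tells us precisely that Step 1 raises $d(a)$ by exactly $1$ for each label $a<r$ to the right of $r$ and leaves all other indices unchanged. I would then show that Step 2 lowers the dominance of each such endangered label back by exactly $1$, so that it returns to its value $d_P(a)<a$ in $P$, while the leftward passage of a positive column can only decrease the dominance of its own entries and never raises an unaffected label above its bound.

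The main obstacle is the sequential, interacting nature of the moves in Step 2: moving $C_1$ changes which empty column is rightmost to the left of $C_2$, as the definition itself flags. To control this I would track the dominance indices one move at a time, maintaining the invariant that after processing $C_1,\dots,C_m$ each of their entries has had its dominance decreased by exactly one and that a suitable empty slot remains for the next column --- the empty column inserted just to the right of $r$ in Step 1 supplies the first such slot, and each completed move vacates the slot $C_m$ previously occupied for the moves that follow. Establishing that the net effect of Step 2 is precisely to undo the increments of Step 1, with no collateral violation of $d(a)<a$, is the heart of the argument; once it is in place we have $\ins(P,p)\in\CPF(n)$, and the restriction statements follow as explained above.
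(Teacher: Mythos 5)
Your proposal is correct and takes essentially the same approach as the paper: verify the Catalan condition stepwise (Step 1 splits a column height and increments the left part, Step 2 only moves mass leftward), control dominance indices via Lemma \ref{lem:corners} (unchanged in the non-corner case, raised by one for each $a<r$ right of $r$ in the corner case and then undone by the column moves), and read off the restrictions to $\Good(n)$ and $\Bad(n)$ from the characterization lemma together with Definition \ref{def:GPF-BPF}. The one step you defer --- that the sequential moves of Step 2 lower each endangered label's dominance index by exactly one, with no collateral increase elsewhere --- is asserted just as tersely in the paper's own proof, and your sketched invariant does close it, since every non-empty column strictly between a moved column and its target slot lies to the right of $r$ and contains only labels greater than $r$, hence greater than any endangered $a<r$.
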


\begin{proof}
   To show that $\ins$ is well-defined, it suffices to show that the outputs are column-restricted parking functions.
   
   To show that the output $\ins(P,p)$ is a parking function, we need to show that the resulting sequence of column heights is still a Catalan composition.  Let $\mathbf{k}=(k_1,\ldots,k_{n-1})$ be the sequence of column heights of $P$.  
   
   The sequence of column heights after performing Step 1 of $\ins(P,p)$ is formed by splitting some $k_i$ into two (possibly empty) parts $k_i'$ and $k_i''$, and increasing the first part by $1$.  The resulting partial sums $k_1+\cdots+k_t$ are unchanged for $t< i$, and so in particular $k_1+\cdots +k_{i-1}\ge i-1$ so $k_1+\cdots+k_{i-1}+(k'_i+1)\ge i$.  For $t>i$, we have that the $t$-th partial sum of the new sequence is $$k_1+\cdots+k_{i-1}+(k'_i+1)+(k''_i)+k_{i+1}+\cdots+k_{t-1}=k_1+\cdots+k_{t-1}+1\ge t-1+1=t$$ and so the new sequence of column heights is Catalan.
   
   It follows that, if $(P,p)$ is good (and hence there is no Step 2), $\ins(P,p)$ is a parking function.  Moreover, by Lemma \ref{lem:corners}, $\ins(P,p)$ is column restricted since the dominance indices of each entry do not change.  Thus $\ins:\Noncorner(n-1)\to \Good(n)$ is well-defined.
   
   Now suppose $(P,p)$ is bad.  Then Step 2 of computing $\ins(P,p)$ simply moves some columns to the left, so this only increases the partial sums and the resulting column heights sequence is still Catalan. Thus $\ins(P,p)$ is a parking function.  To see that it is column restricted, let $r$ be the entry just below the corner $p$ as in Lemma \ref{lem:corners}.  The dominance index of the entries weakly left of $r$ do not change.  For the entries to the right of $r$, if $a<r$ then its column is moved one step left into an empty column, which decreases its increased dominance index by $1$ and hence we still have $\dom_P(a)<a$ after Step 2.  If $a>r$ then moving columns to the left can only decrease its dominance index.  Thus $\ins(P,p)$ is column-restricted.
\end{proof}

We now show that the maps $\ins:\Noncorner(n-1)\to \Good(n)$ and $\ins:\Corner(n-1)\to \Bad(n)$ are bijections.  We define the inverse map as follows.

\begin{defi}
  Define $\rem:\CPF(n)
   \to \CPF_\bullet(n-1)$ via the following two-step algorithm.  For any $Q\in \CPF(n)$:  
   \begin{enumerate}
       \item If $Q$ is bad, let $r$ be the entry below $n$ in $Q$.  Let $C_1,\ldots,C_t$ be the columns to the right of $r$ containing an entry $a<r$.  Move $C_t$ into the nearest empty column to its right, and then move $C_{t-1}$ in the same manner, and so on.
       \item If $Q$ is good, or if it is bad and we have just performed Step 1 above, then set $p$ to be the lattice point in the lower left corner of the square containing $n$, remove $n$ from its column, and shift the tail of the path after $n$ one step down and one step left. 
   \end{enumerate}  
   Then if $P$ is the resulting parking function, define $\rem(Q)=(P,p)$.
\end{defi}

If $\rem$ is well-defined, then it is an inverse of $\ins$.  The following lemma therefore completes the proof.

\begin{lemma}
	The map $\rem:\CPF(n)\to \CPF_\bullet(n-1)$ is well-defined, and it restricts to maps $$\rem:\Good(n)\to \Noncorner(n-1)$$ and $$\rem:\Bad(n)\to \Corner(n-1).$$
\end{lemma}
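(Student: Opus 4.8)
The plan is to show that $\rem$ is the two-sided inverse of $\ins$, which suffices to conclude that $\ins$ restricts to the two claimed bijections. Since the previous lemma already established that $\ins$ maps $\Noncorner(n-1)$ into $\Good(n)$ and $\Corner(n-1)$ into $\Bad(n)$, and since $\ins$ and $\rem$ are defined by mutually reverse geometric operations (shifting the tail of the path up-and-right versus down-and-left, inserting the label $n$ versus removing it, and the column-shuffling of Step 2 in opposite directions), the core task is to verify that $\rem$ lands in the correct target sets and that $\rem \circ \ins$ and $\ins \circ \rem$ are the identity.

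First I would check that $\rem$ is well-defined, i.e. that its output $(P,p)$ is a genuine pointed column-restricted parking function. This parallels the well-definedness proof for $\ins$. For the Catalan condition: removing the row containing $n$ and shifting the tail down-and-left in Step 2 merges two column-height parts and decreases the relevant part by $1$, so the partial sum at each index $t>i$ drops by exactly $1$ while its index drops by $1$ as well, preserving the Catalan inequality; the column moves in Step 1 (for bad $Q$) only shift columns rightward into empty slots, which can only decrease partial sums by controlled amounts that I would confirm keep the sequence Catalan. For the column-restricted condition, I would track dominance indices exactly as in Lemma~\ref{lem:corners}: removing $n$ and its empty column decreases by $1$ the dominance index of each label $a<r$ to the right of $r$, and the rightward column moves in the bad case restore the balance, so $\dom(a)<a$ is maintained throughout.

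Next I would verify the case analysis that sends each source to its correct target. If $Q\in\Good(n)$, then by Definition~\ref{def:GPF-BPF} either there is no label below $n$, or the square up-and-right from $n$ holds a label exceeding $r$; in either situation Step 1 of $\rem$ is vacuous and only Step 2 fires, and I would check that the resulting point $p$ (the lower-left corner of $n$'s square) is \emph{not} an upper-left corner of the shortened path, placing $(P,p)$ in $\Noncorner(n-1)$. Dually, if $Q\in\Bad(n)$, then Step 1 of $\rem$ genuinely moves columns and the recovered point $p$ sits between an up step and a right step, giving $(P,p)\in\Corner(n-1)$. These two facts are the mirror images of Lemma~\ref{lem:corners} and the lemma characterizing good versus bad outputs of $\ins$.

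The main obstacle I expect is the bookkeeping for the column-shuffling in Step 2 of $\ins$ versus Step 1 of $\rem$ in the bad case: one must confirm that the sequence of leftward moves performed by $\ins$ is \emph{exactly} undone by the corresponding sequence of rightward moves in $\rem$ (and vice versa), including that the columns $C_1,\ldots,C_t$ identified by dominance-index changes are the same set recovered on the inverse pass, just in reversed order. Once this matching of column permutations is pinned down, the composites $\rem\circ\ins$ and $\ins\circ\rem$ reduce to checking that the path-shift and label-insertion/removal operations cancel, which is immediate. I would therefore devote most of the argument to a careful induction or direct tracking of the positions of the moved columns, using that each move targets the \emph{nearest} empty column in the prescribed direction, so the moves commute appropriately and reconstruct the original diagram.
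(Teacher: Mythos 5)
Your overall skeleton (prove well-definedness first, then split into the good and bad cases to land in $\Noncorner(n-1)$ and $\Corner(n-1)$) matches the paper's, and your treatment of the good case is essentially complete. The genuine gap is in the bad case, at exactly the point you dismiss in one clause: that the rightward column moves of Step 1 ``can only decrease partial sums by controlled amounts that I would confirm keep the sequence Catalan.'' Moving a column \emph{rightward} into an empty column strictly decreases some partial sums of the column-height sequence, so the Dyck-path property is genuinely at risk; it is not a routine check, and it is false for general parking functions. The paper's proof must first observe that moving a column $C$ right into the nearest empty column preserves the Catalan property precisely when the bottom entry of $C$ lies \emph{strictly} above the diagonal, and then prove the key Claim: for a bad $Q$, every entry $b<r$ lying to the right of $r$ sits strictly above the diagonal, because at least $e+j$ of the numbers $1,\ldots,b-1$ lie to the left of $b$ (here $e$ counts empty columns and $j$ counts nonempty columns with maximum entry below $b$ to the left of $b$). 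That counting argument leans crucially on the column-restricted hypothesis on $Q$ --- otherwise small labels could sit far to the right and the move would push the path below the diagonal --- and it is the heart of the lemma; your proposal contains no substitute for it. The dominance-index bookkeeping is likewise more delicate than ``the rightward column moves restore the balance'': that is adequate only for entries $b<r$, whose index goes up by $1$ and back down by $1$ when $n$ is removed. An entry $a>r$ in a moved column has its dominance index increased by $i+1$, where $i$ is the number of columns it passes whose maximum is below $a$, and one needs the matching bound $\dom_Q(a)\le a-2-i$ (again extracted from column-restrictedness) to conclude $\dom_P(a)\le a-1$; a separate estimate is needed for entries $a'>r$ that do not move but are passed by a moving column.

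Separately, the issue you designate as the ``main obstacle'' --- verifying that the column moves of $\ins$ and $\rem$ undo each other so that $\rem\circ\ins$ and $\ins\circ\rem$ are the identity --- is not part of this lemma at all. The paper notes immediately before the statement that once $\rem$ is known to be well-defined, it is automatically an inverse of $\ins$, since each step of $\rem$ reverses the corresponding step of $\ins$ by construction. So your planned effort is aimed at the wrong target: the composite-identity check is the easy, formal part, while the well-definedness of $\rem$ on $\Bad(n)$ --- the Claim above and the dominance estimates --- is where essentially all of the work lies.
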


\begin{proof}
  Let $Q\in \CPF(n)$.  If $Q$ is good, then $\rem(Q)=(P,p)$ is formed by removing the $n$ and shifting all later columns one step left (merging the column that contained $n$ with the next column, which always results in a valid column having increasing entries by the definition of good).  Since the partial sums of the column heights decrease by $1$ but the indices also decrease by $1$, the sequence of column heights in $P$ is Catalan.  Moreover, all entries retain their dominance index from $Q$ to $P$.  Finally, by the definition of good, $p$ is not an upper left corner of the diagram.  Thus if $Q\in \Good(n)$ then $\rem(Q)\in \Noncorner(n-1)$.
  
  Now suppose $Q\in \Bad(n)$.  Let $r$ be the entry below $n$ in $Q$ and let $C_1,\ldots,C_t$ be the columns (listed from left to right) to the right of $r$ containing some entry $a<r$.  Then $\rem(Q)=(P,p)$ is formed by first shifting the columns $C_t,C_{t-1},\ldots,C_1$ in that order to the nearest empty columns to their right, and then removing the $n$ and shifting all columns to the right of it one step left.  
  
  We first show that the sequence of column heights of $P$ remains Catalan.  To do so, we must show that the column heights are still Catalan after shifting each of $C_t,\ldots,C_1$ to the right, since the last step of removing the $n$ and shifting left does not change the Catalan property (as in the good case).  Notice that moving a column $C$ into the first empty column to its right retains the Catalan property if and only if the bottom entry of column $C$ was \textit{strictly} above the diagonal to begin with.  So, we simply need to show that any element $b<r$ to the right of $r$ in $Q$ lies strictly above the diagonal.
  
  Let $b$ be such an entry in $Q$, and let $e$ be the number of empty columns to the left of $b$ and $s$ the number of nonempty columns to the left of $b$ (including the column containing $r$ and $n$).  Let $j$ be the number of the nonempty columns whose largest entry is less than $b$, and denote the largest entries of these columns $b_1,\ldots,b_j$ where $b_1<b_2<\cdots<b_j$.  
  
  \textbf{Claim.} At least $e+j$ of the numbers in $\{1,2,\ldots,b-1\}$ are to the left of $b$ in $Q$.

  To prove this claim, note that since there are $e<b$ empty columns, the numbers $1,2,\ldots,e$ must be to the left of $b$ in $Q$, for otherwise their dominance index would be too high (since $Q$ is column-restricted).  Moreover, suppose exactly $j_0$ of the numbers $b_1,\ldots,b_j$ are less than $e$, and $j_1=j-j_0$ are greater, so that $$b_1<\cdots<b_{j_0}<e<b_{j_0+1}<\cdots<b_{j}.$$
  Then the $j_1$ entries $b_{j_0+1}<\cdots<b_j$ are left of $b$ by assumption.  But since $j_0$ of the largest entries of the columns to the left of $b$ are less than $e$, the smallest $j_0$ letters among $$\{e+1,\ldots,b-1\}\smallsetminus \{b_{j_0+1},\ldots,b_j\}$$ cannot be to the right of $b$ either, for otherwise their dominance index would be too large.  It follows that there are at least $e+j_1+j_0=e+j$ entries among $\{1,2,\ldots,b-1\}$ to the left of $b$, proving the claim.
  
  Note that, to the left of $b$, there are $s-j$ columns having largest entry greater than $b$, $e+j$ entries less than $b$, and the entry $r$. Thus there are at least $(e+j)+(s-j)+1=e+s+1$ distinct entries to the left of $b$.  Since there is one entry per row in any parking function, the number of rows below $b$ is greater than $e+s$, and $e+s$ is the number of columns to the left of $b$ by the definition of $e$ and $s$.  It follows that $b$ lies strictly above the diagonal, as desired.
  
  We have now shown that $P$ is a parking function, and it remains to show that it is column-restricted.  The entries weakly left of $r$ are unchanged from $Q$ to $P$, so we consider the entries to the right of $r$ in $Q$.  
  
  Suppose $b<r$ is to the right of $r$.  Then the column containing $b$ will be moved to the right past some number of consecutive columns whose smallest entry is greater than $r$.  This increases the dominance index of $b$ by $1$, but then removing the $n$ and shifting the columns to the left decreases its dominance index by $1$. Since $Q$ is column-restricted, we have $\dom_P(b)=\dom_Q(b)\le b-1$.
  
  Now consider an entry $a>r$ to the right of $r$ that is in a column $C$ that is moved to the right, so that there is an entry $b<r$ in $C$ as well.  Let $i$ be the number of columns that $C$ moves past whose largest entry is less than $a$ (and necessarily greater than $b$).  Then since $\dom_Q(b)\le b-1$, and the number of columns to the left of $a$ whose largest entry is between $b+1$ and $a-1$ is at most $|\{b+1,\ldots,a-1\}|-i=a-1-b-i$, we have that $$\dom_Q(a)\le b-1+a-b-1-i=a-2-i.$$  After moving $C$ to the right, the dominance index of $a$ increases by exactly $i+1$, and removing the $n$ and shifting the columns left does not affect the dominance index since $a>r$.  Thus $$\dom_P(a)=\dom_Q(a)-(i+1)\le a-2-i-(i+1)=a-1$$ and so the column restricted condition holds at $a$.
  
  Finally, consider an entry $a'>r$ to the right of $r$ that does not move.  If no column $C$ moves past $a'$ then its dominance index does not change.  Otherwise, suppose a column $C$ whose largest entry is $a$ moves past the column containing $a'$ in forming $\rem(Q)$.  If $a<a'$ then the dominance index of $a'$ does not change, so suppose $a>a'$.  Then since $\dom_Q(b)\le b-1$ and there are no empty columns between $C$ and $a'$, we have $$\dom_Q(a')\le (b-1)+(a-1-b)=a-2.$$  Since moving $C$ past $a'$ increases the dominance index of $a'$ by exactly $1$, we have $\dom_P(a')\le a-1$ as desired.
\end{proof}

It follows that the map $\ins:\CPF_\bullet(n-1)\to \CPF(n)$ is a bijection, and equation (\ref{eqn:double-factorial}) follows.  This completes the proof of Theorem \ref{thm:main-2}.

\subsection{An alternative insertion algorithm}

In the previous section, we established equation (\ref{eqn:double-factorial}) by algorithmically defining a bijection $\ins:\CPF_{\bullet}(n-1)\to \CPF(n)$.  We now define a different bijection $$\ins':\CPF_{\bullet}(n-1)\to\CPF(n)$$ that achieves the same result. 

\begin{remark}\label{rmk:not-as-good}
  While the map $\ins$ preserves the partition of $[n]$ into columns (though may reorder the columns), the map $\ins'$ does not.  However, as we shall see below, the bijectivity of $\ins'$ has the advantage of having a much simpler proof than that of $\ins$.  For this reason we include both bijections in this discussion.
\end{remark}

\begin{defi}
  For an element $(P,p)\in \CPF_\bullet(n-1)$, we define $\ins'(P,p)$ as follows.   Let $P_{p\to}$ be the tail of $P$ after $p$ as defined in Definition \ref{def:ins}. 
  
  \textbf{Case 1:} Suppose $p$ is not an upper-left corner of the Dyck path of $P$.  Shift $P_{p\to}$ one step up and one step right.  Connect the newly separated paths by an up step followed by a right step, and label the new up-step by $n$.  The result is $\ins(P,t)$.
  
  \textbf{Case 2:} Suppose $p$ is an upper-left corner of $D$.  Shift $P_{p\to}$ one step up, connecting the paths with an up-step and giving it the label $n$.  Let $r$ be the highest label below $p$, that is, the label just below $n$.  Then, for each label $a$ in $P_{p\to}$ in order from top to bottom, perform the following action based on the three subcases below.  
  \begin{enumerate}
  	\item[(a)] If $a<r$, do nothing and proceed to the next label below $a$.  	
  	\item[(b)] If $a>r$ and if moving $a$ one square to the right results in all increasing columns, do so.  Proceed to the next label below $a$.
  	\item[(c)] If $a>r$ but we cannot move $a$ one square to the right, let $c_1<\cdots<c_u$ be the labels in the column just to the right of $a$ that are less than $r$, and let $b_1<\cdots<b_v$ be the labels in the column of $a$ that are less than $r$.  Then we interchange the sets of numbers $\{c_i\}$ and $\{b_i\}$ between the two columns.  Proceed with the next label \textit{strictly to the left} of $a$. 
  	\end{enumerate}   
\end{defi}

We illustrate the map $\ins'$ in Figure \ref{fig:ins-prime}.

\begin{figure}
	\begin{center}
		\includegraphics{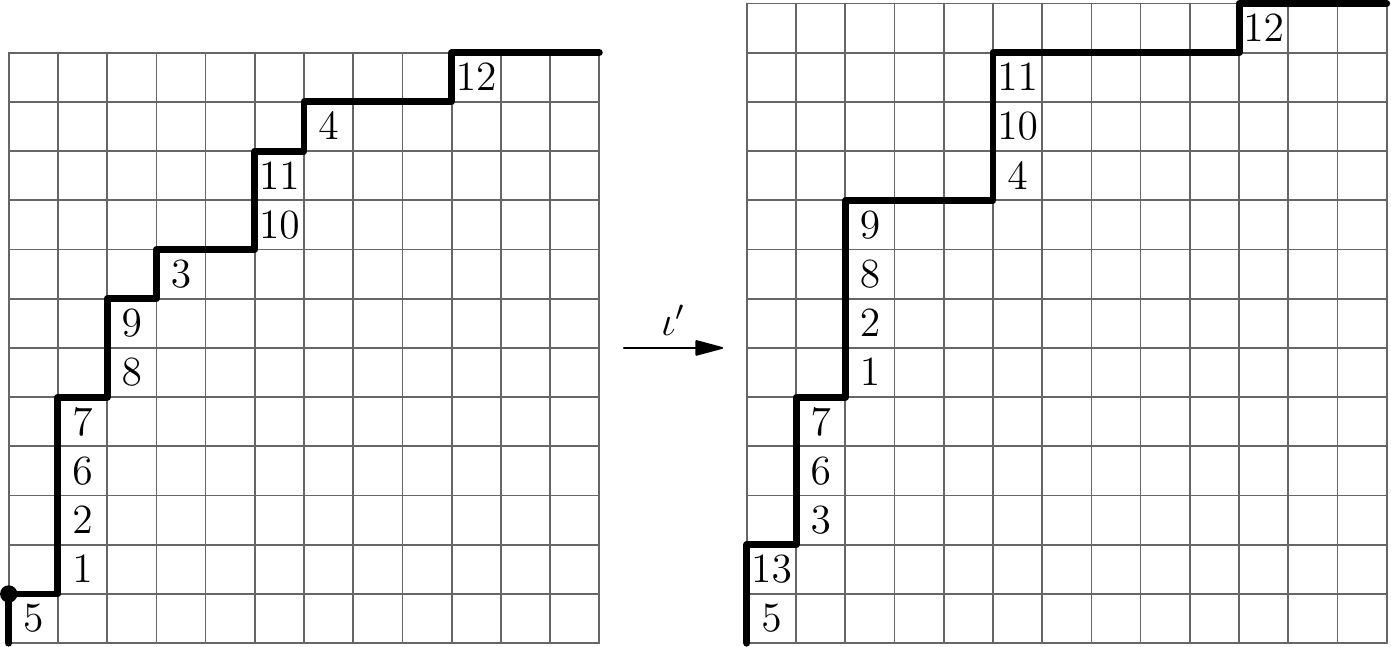}
		
	\end{center}
	\caption{\label{fig:ins-prime}   An example of the map $\ins'$, where the dotted corner in the left hand diagram indicates the point $p$ at which we insert $n=13$.}
\end{figure}

Notice that, in Case 2 of the algorithm for $\ins'$, the numbers $a>r$ are precisely those whose dominance index changes upon inserting the $n$ and shifting $P_{p\to}$ up one step, and in particular their dominance index decreases by $1$.  Shifting them to the right restores their original dominance index when possible (Case 2(b)).  

 We now provide two important lemmata about the map $\ins'$.

\begin{lemma}\label{lem:ins-structure}
   In Case 2(c), there must exist a nonempty collection of entries $c_1<\cdots <c_u$ in the column to the right of $a$ that are less than $r$.  
\end{lemma}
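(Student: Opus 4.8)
The plan is to argue by contradiction. Assume that, at the moment we invoke Case 2(c) for the label $a$, the column immediately to the right of $a$ contains no entry smaller than $r$, and show that this is incompatible with the fact that the rightward move of $a$ failed, given that $P$ is column-restricted.

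First I would isolate the precise reason the move of $a$ fails. After inserting $n$ directly above $r$ and shifting the tail $P_{p\to}$ up by one step, every column of the tail lies \emph{strictly} above the diagonal: the partial sum of its column heights was already at least its index because $P$ is a Dyck path (a Catalan composition), and the extra up-step labeled $n$ raises each such partial sum by one. Hence the diagonal never obstructs a rightward move inside the tail, and the only way sliding $a$ one square to the right can fail to yield increasing columns is that the column $C'$ immediately to the right of $a$ already contains an entry smaller than $a$. Being the smallest such entry, this is the bottom entry $\beta$ of $C'$, so $\beta < a$. Since $\beta=\min(C')$, the assertion of the lemma (``$C'$ contains an entry less than $r$'') is equivalent to $\beta < r$, and it suffices to rule out $r < \beta < a$.

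The core of the argument is the top-to-bottom processing order. The entry $\beta$ occupies the row directly above $a$, so it is examined immediately before $a$. If $\beta$ were larger than $r$, then $\beta$ itself was subject to subcase (b) or (c) when it was processed; I would show that, along the chain of entries exceeding $r$ already slid one step to the right, $\beta$ would likewise have been moved off the bottom of $C'$, so that a label in the open interval $(r,a)$ cannot survive as the blocking bottom entry of $C'$. For the \emph{first} time Case 2(c) is reached this is immediate: every label exceeding $r$ lying above $a$ was handled in subcase (b) and hence moved right, so no such label can remain directly to the right of $a$, forcing $\beta<r$.

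The main obstacle is extending this beyond the first occurrence of Case 2(c). Because subcase (c) interchanges the small labels of two adjacent columns and then redirects the scan \emph{strictly to the left} of $a$, the configuration reached at a later Case 2(c) is no longer the clean shifted copy of $P$, and one must certify that the earlier interchanges have not deposited an entry of $(r,a)$ at the bottom of $C'$. I would handle this by maintaining a loop invariant for the Case 2 processing --- that before each label is examined, the entries exceeding $r$ are arranged so that each adjacent pair of columns is order-compatible and each such entry retains the dominance index it would have in a column-restricted filling --- and checking that each of the three subcases preserves it. Granting the invariant, a bottom entry $\beta\in(r,a)$ of $C'$ would produce a dominance-index violation for $\beta$, contradicting column-restrictedness as carried by the invariant; hence $\beta<r$, and the set $c_1<\cdots<c_u$ is nonempty.
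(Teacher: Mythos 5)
Your reduction is correct as far as it goes: after the up-shift each tail column consists of a bottom segment of labels less than $r$ under a top segment of labels greater than $r$, the rightward move of $a$ fails precisely when $\beta=\min(C')<a$, and the lemma amounts to excluding $\beta\in(r,a)$; your handling of the \emph{first} occurrence of Case 2(c) is also fine. But the inductive step --- the only hard part --- has a genuine gap, in two places. First, the assertion that $\beta$ ``is examined immediately before $a$'' fails exactly in the situation you must handle: after an earlier Case 2(c) at the top label $a''$ of $C'$, the scan jumps strictly to the left of $a''$, so the entries now sitting at the bottom of $C'$ (those deposited there by the interchange) are never examined at all; the label examined immediately before $a$ is $a''$, not $\beta$. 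Second, the mechanism you propose for the contradiction cannot work: there is no dominance-index obstruction to a label $\beta\in(r,a)$ sitting at the bottom of $C'$. A column consisting of the single label $r+1$, to the right of a column whose top label exceeds $r+1$, violates no column-restriction and no ``order-compatibility''; dominance indices simply do not see the configuration you need to exclude. Indeed, column-restrictedness of $P$ plays no role in this lemma at all.

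What actually rules out $\beta\in(r,a)$ is the algorithmic history, and this is the strong induction on Case 2(c) steps that the paper's (one-line) proof gestures at. Observe first that Case 2(c) can only ever trigger at the top label of a column: if the top label of a column moves right, it lands below every entry of the column receiving it, so each subsequent (smaller) label of the top segment moves right as well, landing under the previous one. Consequently, when the scan reaches the column of $a$, the column $C'$ to its right can contain a label exceeding $r$ only if Case 2(c) fired at $C'$ itself; otherwise every label of $C'$ exceeding $r$ was moved right by subcase (b) before the scan left $C'$, and $C'$ retains only its bottom segment, all of whose labels are less than $r$ (swaps never move labels greater than $r$, and nothing else can enter $C'$ before $a$ is examined). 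And if Case 2(c) did fire at $C'$, that very interchange deposited into $C'$ the set of labels less than $r$ from the column to the right of $C'$, a set which is nonempty by the inductive hypothesis applied to that earlier Case 2(c) step; these labels sit below the top segment, so again $\beta<r$. In either case, whenever $C'$ blocks $a$ it contains a label less than $r$, which is the lemma. So your overall contradiction-plus-invariant scheme is salvageable, but the invariant must be the structural one (``if the column to the right of the scan position contains a label $>r$, it also contains a nonempty set of labels $<r$''); the dominance-index formulation cannot be repaired.
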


\begin{proof} 
This follows by a  strong induction argument on the steps in Case 2 of the computation of $\ins'$.
\end{proof}

\begin{lemma}
	The map $\ins'$ is a well-defined map from $\CPF_\bullet(n-1)$ to $\CPF(n)$.
\end{lemma}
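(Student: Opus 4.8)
The plan is to check the three defining properties of a column-restricted parking function separately for each of the two cases of the definition, namely that the output has a Catalan sequence of column heights (so its path reaches $(n,n)$), that every column is increasing, and that $\dom_Q(a)<a$ for every label $a$ of the output $Q$. Case~1 requires no new work: there $\ins'(P,p)=\ins(P,p)$ with $(P,p)\in\Noncorner(n-1)$, and the lemma establishing that $\ins$ is well-defined already shows this lands in $\Good(n)\subseteq\CPF(n)$. Thus the entire content is Case~2, where $p$ is an upper-left corner, and I would devote the argument to it.

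First I would record the effect of the initial shift-up in Case~2. Inserting $n$ directly above $r$ adds a single up-step to the column of $r$ and leaves the path ending at $(n-1,n)$; equivalently the column-height vector becomes a composition of $n$ of length $n-1$, which the subsequent moves must repair into a length-$n$ Catalan composition reaching $(n,n)$. Two consequences are immediate and worth isolating: the label $n$ satisfies $\dom(n)\le n-1<n$ with no further checking, and, exactly as in Lemma~\ref{lem:corners} (now with no compensating empty column, cf. the observation following the definition of $\ins'$), the only labels whose dominance index is disturbed are the $a>r$ lying to the right of $r$, each of which \emph{drops} by exactly $1$. Hence after the shift the sole genuine defect is in the shape of the path, while the column-restricted inequalities have, if anything, gained a unit of slack at the affected labels.

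Next I would read the top-to-bottom sweep over the labels of $P_{p\to}$ as a sequence of local moves and maintain the invariant that after each step (i) every partial column-height sum remains at or above the diagonal, (ii) every column is increasing, and (iii) every label $a>r$ still satisfies $\dom(a)<a$. A type-(b) move shifts a single label one square right, which restores the unit of dominance index lost in the shift while only being performed when it keeps all columns increasing; here I would track the corresponding partial-sum change and verify the Catalan threshold survives. A type-(c) move is invoked precisely when the rightward shift is blocked, and interchanges the sub-$r$ blocks $\{b_1<\cdots<b_v\}$ and $\{c_1<\cdots<c_u\}$ between two adjacent columns; Lemma~\ref{lem:ins-structure} guarantees the right-hand block is nonempty, so the swap is legitimate, and I would check that it keeps both columns increasing (the swapped entries are all $<r$ and sit at the bottoms of their columns) and keeps all partial sums admissible while changing the two heights by $\pm(u-v)$. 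The instruction to continue with the next label strictly to the left of $a$ after a type-(c) move is exactly what propagates the rightward deficit leftward until it is absorbed and the path terminates at $(n,n)$.

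The main obstacle I anticipate is the bookkeeping for the type-(c) swaps, where two blocks of \emph{different} sizes are exchanged: one must show simultaneously that no column becomes non-increasing, that no partial height-sum is driven below the diagonal, and that the column-restricted bound persists for every label whose dominance index is altered, not merely for the single label being processed. I would handle this by a strong induction on the steps of the sweep, mirroring the induction already used for Lemma~\ref{lem:ins-structure}, and spending as a ``budget'' the slack $\dom(a)\le a-2$ that the initial shift created at each affected $a>r$. Once this invariant is established, all three properties hold for the final diagram, which proves that $\ins'$ is a well-defined map $\CPF_\bullet(n-1)\to\CPF(n)$.
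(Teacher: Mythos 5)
Your proposal is correct and takes essentially the same approach as the paper's proof: Case 1 is dispatched by reduction to the well-definedness of $\ins$, and Case 2 is handled by tracking how each label's position relative to the diagonal and its dominance index change under the shift and the type-(b)/(c) moves (labels $a>r$ move up one and at most one step right, so stay weakly above the diagonal with dominance changing by $-1$ or $0$; labels $a<r$ swapped in (c) stay in above-diagonal adjacent columns with dominance unchanged). The paper's argument is simply a terser, global formulation of the step-by-step invariant bookkeeping you propose.
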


\begin{proof}
	 Let $(P,p)\in \CPF_\bullet(n-1)$.  If the computation of $\ins'(P,p)$ is of the type in Case 1, an argument identical to that of $\ins$ shows that $\ins'(P,p)\in \CPF(n)$.
	 
	 In Case 2, note that all labels $a>r$ in $P_{p\to}$ were first moved up one step and then possibly to the right one step, so these all still lie weakly above the diagonal.  The entries $a<r$ that are moved via Case 2(c) are moved from one column that starts above the diagonal to an adjacent column or vice versa, and both columns therefore stay above the diagonal as well.  Thus the path of $\ins'(P,p)$ is still a Dyck path.
	 
	 Additionally, the dominance index of each entry $a<r$ does not change, and if $a>r$ it can change by $-1$ if $a$ does not move and by $0$ if it does move to the right.  In either case the new parking function is still column-restricted, so $\ins(P,t)\in \CPF(n)$.
\end{proof}

\begin{thm}
	The map $\ins':\CPF_\bullet(n-1)\to \CPF(n)$ is a bijection, and it restricts to bijections $$\ins':\Noncorner(n-1)\to \Good(n)$$ and $$\ins':\Corner(n-1)\to \Bad(n).$$
\end{thm}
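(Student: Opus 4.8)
The plan is to reduce the statement to the ``bad'' half and then invert $\ins'$ there directly. By definition, Case 1 of $\ins'$ is \emph{verbatim} the map $\ins$ applied to a non-corner point, so $\ins'|_{\Noncorner(n-1)}=\ins|_{\Noncorner(n-1)}$; since we have already shown that $\ins:\Noncorner(n-1)\to\Good(n)$ is a bijection, this half requires no further argument. All the content therefore lies in Case 2, i.e.\ in proving that $\ins':\Corner(n-1)\to\Bad(n)$ is a bijection. First I would confirm the range: for $(P,p)\in\Corner(n-1)$ the map places $n$ directly above $r$ via a single up-step, so $r$ is the entry below $n$, and the local moves of 2(b) and 2(c) only relocate labels $a>r$ to the right or swap sub-$r$ labels between adjacent columns; tracking these shows the square up-and-right of $n$ ends empty or holds a label $<r$, which is exactly the condition of Definition \ref{def:GPF-BPF} for $\ins'(P,p)$ to be bad.

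Next I would construct the inverse explicitly rather than appeal to an abstract counting argument, since this is where $\ins'$ is genuinely easier than $\ins$. Given $Q\in\Bad(n)$, the inverse algorithm locates $n$, reads off $r$ as the label immediately below it, and then undoes Step 2: it reverses the 2(c) swaps and 2(b) rightward shifts in the reverse processing order, after which it removes $n$ and shifts the tail of the path one step down to recover the corner point $p$. The decision of whether a given entry was moved is forced by comparison with $r$ together with the increasing-column constraint, so the reversal is deterministic. The well-definedness of the 2(c) swaps, guaranteed by Lemma \ref{lem:ins-structure}, is precisely what makes each such step reversible. Verifying that this algorithm returns a genuine element of $\Corner(n-1)$ and is two-sided inverse to $\ins'$ then yields the bijection $\ins':\Corner(n-1)\to\Bad(n)$; combined with the free half it gives the full bijection $\ins':\CPF_\bullet(n-1)\to\CPF(n)$, and hence re-derives $|\CPF(n)|=(2n-1)\,|\CPF(n-1)|$ independently of the map $\rem$.

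A lighter alternative for the last step, if one is willing to reuse the cardinality equality $|\Corner(n-1)|=|\Bad(n)|$ coming from the $\ins$ bijection, is to prove only that $\ins'$ is injective on $\Corner(n-1)$ and lands in $\Bad(n)$; injectivity between finite sets of equal size then upgrades automatically to a bijection, avoiding any check that the inverse returns a valid pointed parking function.

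The main obstacle is the Case 2 analysis, and specifically the interaction of the 2(c) swaps with the fact (Remark \ref{rmk:not-as-good}) that $\ins'$ does not preserve the partition of $[n]$ into columns. I must track how the sub-$r$ labels $\{b_i\}$ and $\{c_i\}$ migrate between adjacent columns and confirm both that every intermediate configuration keeps its columns increasing and stays weakly above the diagonal---so the algorithm never stalls---and that these migrations can be detected and undone from $Q$ alone, using only the recoverable value $r$ and the increasing-column conditions. Once this locality-and-reversibility bookkeeping is in place, the restriction bijections and hence the full statement follow.
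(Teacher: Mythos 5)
Your proposal follows essentially the same route as the paper's proof: both halve the problem by observing that $\ins'$ coincides with $\ins$ on $\Noncorner(n-1)$ (making the good half free), and both handle the bad half by constructing an explicit inverse that removes $n$, shifts the tail down, and deterministically undoes the Case 2(b)/2(c) moves, with Lemma \ref{lem:ins-structure} (and its analogue for the inverse) guaranteeing the swaps are well-defined and reversible. The only differences are cosmetic (the order in which the $n$-removal and the undo-passes are performed) plus your optional injectivity-and-counting shortcut, which the paper does not need.
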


\begin{proof}
	Note that $\ins'$ and $\ins$ are the same function on $\Noncorner(n-1)$, and so we immediately have that $$\ins':\Noncorner(n-1)\to \Good(n)$$ is a bijection.
	
    Now let $(P,p)\in \Corner(n-1)$.  Then by Lemma \ref{lem:ins-structure} and the definition of a bad parking function (Definition \ref{def:GPF-BPF}), we see that $\ins'(P,p)\in \Bad(n)$.  
    
    To show that the restriction $\ins':\Corner(n-1)\to \Bad(n)$ is a bijection, let $Q\in\Bad(n)$.  Let $r$ be the label in $Q$ just below $n$ in its column, which exists by the definition of $\Bad(n)$.   Let $Q_{\to}$ be the tail in $Q$ after the upper left corner of the square containing $n$.  We define $\rem'(Q)$ as follows.  Remove the $n$ and its adjacent up-step and shift $Q_{\to}$ down one step.  Then, perform the following action on each label $a$ in $Q_{\to}$ in order from bottom to top:
 \begin{enumerate}
 	\item[(a)] If $a<r$, do nothing and proceed to the next label above $a$.  	
 	\item[(b)] If $a>r$ and if moving $a$ one square to the left results in all increasing columns, do so and proceed to the next label above $a$.
 	
 	\item[(c)] If $a>r$ but we cannot move $a$ to the left, let $c_1<\cdots<c_u$ be the labels less than $r$ that occur below $a$ in its column.   Also let $b_1<\cdots<b_v$ be the labels in the next column to the right of $a$ that are less than $r$.  Then we interchange the sets of numbers $\{c_i\}$ and $\{b_i\}$ between the two columns.  Finally, resume this process starting with the first label above the new position of $c_u$. 
 \end{enumerate}   
We set $P$ to be the resulting parking function, and set $p$ to be the northwest corner of the label $r$ in $P$.  Then we define $\rem(Q)=(P,p)$.  Note that the condition of $Q$ being bad implies that $\rem(Q)\in \Corner(n-1)$.

Furthermore, if we are in Case (c) above, a similar strong induction argument as in Lemma \ref{lem:ins-structure} shows that the set $\{c_i\}$ must be nonempty at such a step.  Thus, after interchanging $\{c_i\}$ and $\{b_i\}$, we end up with a number $a>r$ that cannot be moved one step to the right, matching Case 2(c) of the definition of $\ins'$.  It now follows that $\rem'$ is an inverse of $\ins'$ on $\Corner(n-1)$, and so $\ins'$ is a bijection. 
\end{proof}

\bibliography{myrefs}
\bibliographystyle{plain}
\end{document}